\documentclass[11pt]{article}

\usepackage{amsfonts}
\usepackage{stmaryrd}
\usepackage{latexsym}
\usepackage{amssymb}
\usepackage{mathrsfs}
\usepackage{dsfont}
\usepackage{amsmath}
\usepackage{amsthm}
\usepackage{cases}
\usepackage{extarrows}
\usepackage{booktabs}
\usepackage{enumerate}

\usepackage[german,english]{babel}

\usepackage[square, comma, sort&compress, numbers]{natbib}

\usepackage{fancyhdr}
\usepackage{geometry}
\usepackage{color}
\usepackage{epstopdf}
\usepackage{graphicx}
\usepackage{subfigure}

\usepackage{enumerate}

\theoremstyle{plain}
\newtheorem{theorem}{Theorem}[section]

\newtheorem{corollary}[theorem]{Corollary}
\newtheorem{lemma}[theorem]{Lemma}
\newtheorem{example}[theorem]{Example}

\newtheorem{remark}[theorem]{Remark}
\newtheorem{definition}[theorem]{Definition}

\numberwithin{equation}{section}

\usepackage{hyperref}

\begin{document}

\title{$\rm P$-generalized determinant over  quaternions  and its applications}

\author{{\ Jiale Gao$^{a}$,    Qing-Wen Wang$^{a,b}$\footnote{Corresponding author:  Qing-Wen Wang; Email: wqw@shu.edu.cn}}
\\ {\small$^{a}$Department of Mathematics, Shanghai University, Shanghai 200444, China}
\\
{{\small $^b$Collaborative Innovation Center for the Marine Artificial Intelligence, Shanghai 200444, China}}
}

\date{}
\maketitle

\begin{abstract}
This paper introduces
the ${\rm P}$-generalized determinant,
a more general class of quaternion determinants.
It unifies  several classical quaternion determinants proposed  between 1922 and 2008, including the Moore determinant, the Xie determinant, the Chen determinant, and the Kyrchei row--column determinants, thereby revealing the essential nature of quaternion determinants as multivariate polynomial-type functions with a fixed ordering of products.
We provide equivalent characterizations of the expansion of the $\mathrm{P}$-generalized determinant along a given row or column, and establish conversion relations between the
$\mathrm{P}_{l}$-  and $\mathrm{P}_{r}$-generalized determinants.
We prove that
 the $\mathrm{P}$-generalized determinant of a quaternion Hermitian matrix  is  a real number  independent of the choice of cycle representation rule.
We further investigate the behavior of the $\mathrm{P}$-generalized determinant under elementary row and column operations on quaternion Hermitian matrices, as well as its linearity in rows and columns.
Finally, using the $\mathrm{P}$-generalized determinant, we derive determinantal representations of the inverse of an invertible quaternion Hermitian matrix and of the unique solution to the quaternion matrix equation $AXB=C$. In particular, this yields  Cramer's rule for the left and right linear systems $Ax=b$ and $xA=b$.
\end{abstract}

\noindent
\textbf{Keywords:}
$\mathrm{P}$-generalized determinant,
quaternion determinant,
quaternion Hermitian matrix,
determinantal representation,
matrix equation,
linear system

\noindent
\textbf{MSC (2020):}  15A15,  15B33, 15B57, 15A09, 15A24, 15A06

\section{Introduction}\label{Introductionsection}
In 1843, the Irish mathematician  Hamilton \cite{hamilton1853lectures} discovered the quaternions
\begin{equation*}
\mathbb{H}=\{q_1+q_2\mathbf{i}+q_3\mathbf{j}+q_4\mathbf{k}\mid \mathbf{i}^2=\mathbf{j}^2=\mathbf{k}^2=\mathbf{ijk}=-1,\;q_1,q_2,q_3,q_4\in\mathbb{R}\},
\end{equation*}
which extend the field of real numbers $\mathbb{R}$ and the field of complex numbers $\mathbb{C}$
to a four-dimensional noncommutative division algebra.
Quaternions have since found significant applications in color image processing \cite{grigoryan2018quaternion},
orbits, aerospace, and virtual reality
\cite{kuipers1999quaternions},
as well as in quantum physics \cite{finkelstein1962foundations,deleo1997quantum}, field theory \cite{adler1995quaternionic}, and relativistic physics \cite{girard2007quaternions}.

In classical linear algebra, the determinant plays a central role: it characterizes invertibility, describes rank, yields the inverse matrix, and underlies Cramer's rule for solving linear systems. Curiously, although Hamilton devoted the remainder of his life to the study of quaternions \cite{hamilton1899elements}, he never addressed the theory of quaternionic determinants. As Dyson \cite{dyson1972quaternion} suggested, this omission likely reflects Hamilton's sense that a satisfactory theory of quaternion determinants was impossible.

Nevertheless, mathematicians did not abandon the study of quaternion determinants.  In 1845,   Cayley \cite{cayley1845quat}  defined  the  first  quaternion determinant, denoted by ${\rm Caydet}$, by expanding recursively along the first column.
However, $\rm Caydet$ found little acceptance among his contemporaries and was ultimately deemed unsatisfactory \cite{aslaksen1996quaternionic}.
What properties should an ideal quaternion determinant satisfy?
 Aslaksen \cite{aslaksen1996quaternionic} proposed that a map $d\colon\mathbb{H}^{n\times n}\to\mathbb{H}$ qualifies as a quaternion determinant precisely when it satisfies the following three axioms:
\begin{description}
\item[Axiom 1] $d(A)=0$ if and only if $A$ is singular.
\item[Axiom 2] $d(AB)=d(A)d(B)$ for any square quaternion  matrices $A,B$.
\item[Axiom 3] If $A_1$ is obtained from $A$ by adding a left-multiple of one row to another, or a right-multiple of one column to another,
     then $d(A_1)=d(A)$.
\end{description}
The $\rm Caydet$  satisfies none of the three axioms
above~\cite{aslaksen1996quaternionic}.
By contrast, in 1920, Study~\cite{study1920zur} used the complex matrix representation $\psi(\cdot)$ of quaternion matrices to define a determinant, denoted by $\operatorname{Sdet}$,  satisfying all three axioms:
\begin{equation*}
\operatorname{Sdet}(A)=\det\nolimits_{\mathbb{C}}\bigl(\psi(A)\bigr),
\end{equation*}
where  $\det\nolimits_{\mathbb{C}}(\cdot)$ denotes the determinant of a complex matrix.
The $\operatorname{Sdet}$ agrees with the $q$-determinant defined by Zhang in his classic survey on quaternions \cite{zhang1997quaternions}.  Moreover, Aslaksen \cite{aslaksen1996quaternionic} employed the real matrix representation $\mu(\cdot)$ of quaternion matrices and proved that $\det\nolimits_{\mathbb{C}}(\mu(A))=({\rm Sdet}(A))^2$.
In 1943, Dieudonn\'{e} \cite{dieudonne1943determinants} redefined the quaternion determinant using commutators from group theory, denoted by ${\rm Ddet}$.
The ${\rm Ddet}$ also satisfies the three axioms, and any quaternion determinant function satisfying these axioms can be expressed as a power of ${\rm Ddet}$ \cite{dieudonne1943determinants}. In particular, ${\rm Sdet}(A)=({\rm Ddet}(A))^2$.

However, Cohen and De Leo \cite{cohen2000quaternionic} observed that the ${\rm Sdet}$ does not truly generalize the complex determinant to the quaternion algebra: the ${\rm det}_{\mathbb{C}}$ takes values throughout $\mathbb{C}$, whereas the ${\rm Sdet}$   takes only nonnegative  real values rather than arbitrary quaternions. In fact, Aslaksen \cite[Theorem~1]{aslaksen1996quaternionic} established a fundamental limitation: any determinant function satisfying Axioms 1--3 must take values in a commutative subset of $\mathbb{H}$. Consequently, no determinant function can simultaneously satisfy all three axioms and take values throughout $\mathbb{H}$.

As early as the 1920s, researchers began exploring determinants that take values throughout the quaternions, at which point relaxing the axiomatic restrictions became inevitable.
In 1922, the American mathematician  Moore, in a short research note \cite{moore1922determinant}, introduced the Moore determinant for quaternion Hermitian matrices, denoted by $\operatorname{Mdet}$.
Let $S_n$ be the symmetric group on the set $[n]:=\{1,2,\ldots,n\}$,
 let  ${\rm sgn}(\sigma)$ be the sign of a permutation $\sigma\in S_n$,
 and let  the  disjoint cycle decomposition of a permutation $\sigma\in S_n$ be written in the usual two-line notation as
\begin{equation}\label{sigmacycdecdisMooreeq}
\sigma=\left(
    \begin{array}{ccccccccccc}
i_{k_1}&\dots&i_{k_1+v_1}&i_{k_2}&\dots&i_{k_2+v_2}&\dots&i_{k_u}&\dots&i_{k_u+v_u}\\
i_{k_1+1}&\dots&i_{k_1}&i_{k_2+1}&\dots&i_{k_2}&\dots&i_{k_u+1}&\dots&i_{k_u}
    \end{array}
  \right),
\end{equation}
where $\sigma(i_{k_d})=i_{k_d+1},
\ldots,
\sigma^{v_d}(i_{k_d})=i_{k_d}$ for all $d\in[u]$.
For a quaternion Hermitian matrix ${A}=[a_{xy}]\in\mathbb{H}^{n\times n}$,
define \cite{moore1922determinant}
\begin{equation}\label{Mooredetdef}
\operatorname{Mdet}({A})=\sum_{\sigma\in S_n}{\rm sgn}(\sigma)
\bigl(a_{i_{k_1},i_{k_1+1}} \cdots a_{i_{k_1+v_1},i_{k_1}}\bigr)   \cdots
\bigl(a_{i_{k_u},i_{k_u+1}}   \cdots a_{i_{k_u+v_u},i_{k_u}}\bigr) ,
\end{equation}
where, for each $\sigma\in S_n$, its cycle decomposition \eqref{sigmacycdecdisMooreeq} is chosen so that $i_{k_t}={\rm min}\{i_{k_t+1},\ldots,i_{k_t+v_t}\}$ for all $t\in[u]$, and $i_{k_1}>i_{k_2}>\cdots>i_{k_u}$.
In 1972, Dyson~\cite{dyson1972quaternion} extended $\operatorname{Mdet}$
to almost Hermitian (self-adjoint) matrices by induction on $n$,
 and posed a far-reaching question: can $\operatorname{Mdet}$ be naturally generalized to arbitrary quaternion matrices?

In 1980, the Chinese mathematician Xie \cite{xie1980centralizable} introduced the determinant of centralizable matrices over an arbitrary division ring, denoted by ${\rm Xdet}$. Observing that quaternion Hermitian matrices are a special case of centralizable matrices,
Xie \cite{xie1980expansion} gave the explicit expansion of ${\rm Xdet}$ for quaternion Hermitian matrices:
\begin{equation}\label{Xiebangjiedetdef}
{\rm Xdet}(A)=\sum_{\sigma\in S_n}{\rm sgn}(\sigma)
\bigl(a_{i_{k_1},i_{k_1+1}} \cdots a_{i_{k_1+v_1},i_{k_1}}\bigr)   \cdots
\bigl(a_{i_{k_u},i_{k_u+1}}   \cdots a_{i_{k_u+v_u},i_{k_u}}\bigr) ,
\end{equation}
where $A=[a_{xy}]\in\mathbb{H}^{n\times n}$ is Hermitian, and the cycle decomposition \eqref{sigmacycdecdisMooreeq} of each $\sigma\in S_n$ is chosen so that $i_{k_t}={\rm min}\{i_{k_t+1},\ldots,i_{k_t+v_t}\}$ for all $t\in[u]$ and $i_{k_1}<i_{k_2}<\cdots<i_{k_u}$.
In 1991, Chen \cite{chen1991definition} defined the determinant of an arbitrary quaternion square matrix, denoted by ${\rm Cdet}$,
\begin{equation}\label{ChenLXdetdef}
\operatorname{Cdet}({A})=\sum_{\sigma\in S_n}{\rm sgn}(\sigma)
\bigl(a_{i_{k_1},i_{k_1+1}} \cdots a_{i_{k_1+v_1},i_{k_1}}\bigr)   \cdots
\bigl(a_{i_{k_u},i_{k_u+1}}   \cdots a_{i_{k_u+v_u},i_{k_u}}\bigr) ,
\end{equation}
where $A=[a_{xy}]\in\mathbb{H}^{n\times n}$, and  the cycle decomposition \eqref{sigmacycdecdisMooreeq} of each $\sigma\in S_n$ is chosen so that $i_{k_t}={\rm max}\{i_{k_t+1},\ldots,i_{k_t+v_t}\}$ for all $t\in[u]$ and $i_{k_1}>i_{k_2}>\cdots>i_{k_u}$.
Furthermore, using ${\rm Cdet}$, Chen defined the double determinant and established Cramer's rule for left and right linear quaternion equations \cite{chen1991definition}, as well as  Hadamard theorem for quaternion matrices \cite{chen1991inverse}.
Meanwhile, in 1991--1992, Gelfand and Retakh \cite{gelfand1991determinants,gelfand1992theory} abandoned the approach of constructing a single determinant and instead defined $n^2$ quasideterminants for $n\times n$ matrices over a noncommutative ring. Quasideterminants  have been successfully applied  to noncommutative geometry, noncommutative algebra, noncommutative symmetric functions, and noncommutative integrable systems \cite{gelfand2005quasideterminants}.

However, none of the above quaternion determinants admits expansion by cofactors along an arbitrary row or column, as the classical complex determinant does, and consequently cannot yield the classical adjugate matrix. It was not until 2008 that Kyrchei \cite{kyrchei2008cramer,kyrchei2012theory} established the row--column determinant theory for quaternion matrices, successfully resolving this problem.  For $A=[a_{xy}]\in\mathbb{H}^{n\times n}$ and $i,j\in[n]$, the $i$-th row determinant is defined as
\begin{equation}\label{Ivanrowdetdef}
{\rm rdet}_i(A)=\sum_{\sigma\in S_n}{\rm sgn}(\sigma)
\bigl(a_{i_{k_1},i_{k_1+1}} \cdots a_{i_{k_1+v_1},i_{k_1}}\bigr)   \cdots
\bigl(a_{i_{k_u},i_{k_u+1}}   \cdots a_{i_{k_u+v_u},i_{k_u}}\bigr) ,
\end{equation}
where the cycle decomposition \eqref{sigmacycdecdisMooreeq} of each $\sigma\in S_n$ is chosen so that $i_{k_1}=i$, $i_{k_t}={\rm min}\{i_{k_t+1},\ldots,i_{k_t+v_t}\}$ for all $t\in[u]\setminus\{1\}$, and $i_{k_2}<\cdots<i_{k_u}$;
and the $j$-th column determinant is defined as
\begin{equation}\label{Ivancolumdetdef}
{\rm cdet}_j(A)=\sum_{\sigma\in S_n}{\rm sgn}(\sigma)
\bigl(a_{i_{k_u},i_{k_u+v_u}}\cdots a_{i_{k_u+1},i_{k_u}}\bigr)
\cdots
\bigl(a_{i_{k_1},i_{k_1+v_1}}\cdots a_{i_{k_1+1},i_{k_1}}\bigr) ,
\end{equation}
where the cycle decomposition \eqref{sigmacycdecdisMooreeq} of each
$\sigma\in S_n$ is chosen so that $i_{k_1}=j$, with the remaining cycles normalized as in $\operatorname{rdet}_j(A)$.
The original idea of the row--column determinant traces back to \cite{kirchej1998fractional}. Since then, this theory has been widely applied to explicit determinantal representations of various generalized inverses, as well as to the solution of two-sided  matrix equations, generalized Sylvester  equations, Lyapunov-type equations, and differential equations over quaternions;  see the monograph \cite{Kyrchei2026} published in 2026.

 By comparing the definitions of ${\rm Mdet}$ \eqref{Mooredetdef}, ${\rm Xdet}$ \eqref{Xiebangjiedetdef}, ${\rm Cdet}$ \eqref{ChenLXdetdef}, ${\rm rdet}_i$ \eqref{Ivanrowdetdef}, and ${\rm cdet}_j$ \eqref{Ivancolumdetdef}, we observe a common feature:
 all five classical quaternion determinants are, in essence, multivariate polynomial-type functions with a fixed ordering of products.
 A natural question arises: what is the essential structure that must be preserved among such quaternion determinants,
 and which choices are inessential?
 Motivated by this, we introduce a more general class of quaternion determinants, termed the {$\rm P$-generalized determinant}, which unifies the five classical determinants above while retaining most of their favorable properties. The main contributions of this paper are as follows:
\begin{enumerate}[(1)]
  \item By formulating left-ordered and right-ordered cycle representation rules for permutations, we define $\mathrm{P}_{l}$-   and $\mathrm{P}_{r}$-generalized determinants, collectively referred to as the $\mathrm{P}$-generalized determinants.

  \item
  We give an equivalent characterization of the expansion of the $\mathrm{P}$-generalized determinant along a given row or column.
  We  then study  its basic properties,
  such as factoring out a common scalar factor from a row  column and additivity with respect to a row or a column.
  Conversion relations between the $\mathrm{P}_{l}$- and $\mathrm{P}_{r}$-generalized  determinants are established.

  \item We reveal the invariance of the $\mathrm{P}$-generalized determinant for a quaternion Hermitian matrix, namely that its value is a real  number   independent of the choice of cycle representation rule.

  \item We investigate the $\mathrm{P}$-generalized determinant when a row (column) of a quaternion Hermitian matrix is replaced by another,
as well as under one-sided elementary row (column) additions and scalings.  We also discuss the connections   between the $\mathrm{P}$-generalized determinant  and Axioms 1--3.

  \item
  Applying the \({\rm P}\)-generalized determinant, we give the determinantal representation of the inverse of an invertible quaternion Hermitian matrix, and define the \({\rm P}\)-adjoint matrix.
  We then apply it to the unique solution of the classical quaternion matrix equation \(AXB=C\), thereby deriving Cramer's rule for the left and right quaternion linear systems \(Ax=b\) and \(xA=b\).
\end{enumerate}

The remainder of this paper is organized as follows. Section \ref{Preliminariessect} recalls the notation, definitions, and lemmas.  Section \ref{Pgenrdetsdefsection} presents the definition of the $\mathrm{P}$-generalized determinant, and its basic properties.
 Section \ref{PgenrdetHersection} investigates the $\mathrm{P}$-generalized determinant for quaternion Hermitian matrices.  Section \ref{appPgenrdetssection} applies the $\mathrm{P}$-generalized determinant to derive the componentwise representation  of the inverse of an invertible quaternion Hermitian matrix and its applications to solving quaternion matrix equations and linear systems.
Conclusions are stated in Section~\ref{conclusection}.

\section{Preliminaries}\label{Preliminariessect}
This section introduces the necessary notation, definitions, and lemmas used throughout the paper.
Let $[n]=\{1,2,\ldots,n\}$ for a positive integer $n$, and let $\mathcal{Q}([n])=\{Q\subseteq[n]\mid Q\neq\emptyset\}$ be the set of all nonempty subsets of $[n]$. Let $S_n$ be the symmetric group on $[n]$.
For a permutation $\sigma\in S_n$, let ${\rm sgn}(\sigma)$ denote the sign of $\sigma$, and ${\rm sgn}(\sigma)=n-r$ if $\sigma$ decomposes into $r$ disjoint cycles (including $1$-cycles); and let $\operatorname{cyc}(\sigma)$ denote the set of all disjoint cycles in the cycle decomposition of $\sigma$.
For a cycle $C\in S_n$, let $\operatorname{supp}(C)=\{i\in[n]\mid C(i)\neq i\}\subseteq[n]$ be the support of $C$, and let $|\operatorname{supp}(C)|$ be the length of $C$.
  A strict total order on a nonempty set is a binary relation that is irreflexive, transitive, and total (any two distinct elements are comparable).

For a quaternion $q=q_0+q_1\mathbf{i}+q_2\mathbf{j}+q_3\mathbf{k}$ with $q_0,q_1,q_2,q_3\in\mathbb{R}$, let
\begin{equation*}
  \overline{q}=q_0-q_1\mathbf{i}-q_2\mathbf{j}-q_3\mathbf{k},\quad
  {\rm tr}(q)=q+\overline{q}=2q_0\in\mathbb{R},\quad
  |q|=\sqrt{q\overline{q}}=\sqrt{q_0^2+q_1^2+q_2^2+q_3^2}
\end{equation*}
denote   the conjugate,  trace, and norm of $q$, respectively.
Let $\mathbb{H}^{m\times n}$ denote the set of all $m\times n$ quaternion matrices. Let $0$ denote the zero matrix of appropriate size and $I_n$ the $n\times n$ identity matrix.  For $A=[a_{xy}]\in\mathbb{H}^{m\times n}$,
 let
 \begin{equation*}
   \overline{A}=[\overline{a_{xy}}]\in\mathbb{H}^{m\times n}, \quad A^{T}=[a_{yx}]\in\mathbb{H}^{n\times m},\quad A^{*}=\overline{A^{T}}=[\overline{a_{yx}}]\in\mathbb{H}^{n\times m}
 \end{equation*}
 denote the conjugate, transpose, and conjugate transpose of $A$, respectively.  Let $P_{ij}(b)$ denote the elementary addition matrix, obtained from $I_n$ by adding a left multiple $b\in\mathbb{H}$ of the $j$-th row to the $i$-th row;
 let $P_i(b)$ denote the elementary scaling matrix, obtained from $I_n$ by left-multiplying the $i$-th row by $b\in\mathbb{H}$;
 let $P_{ij}$ denote the elementary interchange matrix, obtained from $I_n$ by interchanging the $i$-th and $j$-th rows.
  For $A\in\mathbb{H}^{m\times n}$, let $A^{\alpha}_{\beta}\in\mathbb{H}^{m_1\times n_1}$ denote the submatrix of $A$ consisting of the rows indexed by $\alpha=\{\alpha_1,\ldots,\alpha_{m_1}\}\subseteq[m]$ and the columns indexed by $\beta=\{\beta_1,\ldots,\beta_{n_1}\}\subseteq[n]$;
 let $A_{i\cdot}(b)\in\mathbb{H}^{m\times n}$ denote the matrix obtained from $A$ by replacing the $i$-th row with the row vector $b\in\mathbb{H}^{1\times n}$; let $A_{\cdot j}(c)\in\mathbb{H}^{m\times n}$ denote the matrix obtained from $A$ by replacing the $j$-th column with the column vector $c\in\mathbb{H}^{m\times 1}$; let $A^{i,j}\in\mathbb{H}^{(m-1)\times(n-1)}$ denote the matrix obtained from $A$ by deleting the $i$-th row and the $j$-th column;
 and let $a_{i\cdot}\in\mathbb{H}^{1\times n}$ and $a_{\cdot j}\in\mathbb{H}^{m\times 1}$ denote the $i$-th row and the $j$-th column of $A$, respectively.

We use the following convention throughout this paper: for any matrix and its submatrices, the row and column indices are taken from the same index set; that is, a submatrix inherits the row and column indices of the original matrix and is not reindexed.
For example, let $A\in\mathbb{H}^{3\times 4}$ have row index set $\{1,2,3\}$ and column index set $\{1,2,3,4\}$. Take $\alpha=\{2,3\}$ and $\beta=\{1,4\}$, and write
$
A_1:=A^{\alpha}_{\beta}\in\mathbb{H}^{2\times 2}.
$
 Then the row index set of $A_1$ is $\{2,3\}$, and its column index set is $\{1,4\}$.
 Consequently, in the natural increasing order of the indices in $\alpha$ and $\beta$, the rows of $A_1$ indexed by $2$ and $3$ are its first and second rows under the local numbering, respectively; the columns of $A_1$ indexed by $1$ and $4$ are its first and second columns under the local numbering, respectively.

\begin{lemma}\cite[Lemma 3.1]{kyrchei2008cramer}\label{kyrchei2008cramerLemma31}
Let  $h_1,h_2,...,h_n\in\mathbb{H}$, and  let $T_n$ denote the sum over all ordered products of $n$ factors, where for each index $i\in\{1,2,\dots,n\}$, the $i$-th factor is chosen to be either $h_i$ or $\bar h_i$, i.e.,
$
T_n = \sum_{q_i \in \{ h_i, \overline{h_i}\},  ~i=1,2,...,n}  q_1q_2...q_n.
$
Then,  $T_n = {\rm tr}(h_1){\rm tr}(h_2)\cdots {\rm tr}(h_n) \in \mathbb{R}.$

\end{lemma}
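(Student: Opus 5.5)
The identity follows by induction on $n$, using only distributivity of quaternion multiplication and the fact that real numbers are central in $\mathbb{H}$.

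For $n=1$ the sum has two terms, so $T_1=h_1+\overline{h_1}={\rm tr}(h_1)$. This is real by the definition of the trace: if $h_1=q_0+q_1\mathbf{i}+q_2\mathbf{j}+q_3\mathbf{k}$, then ${\rm tr}(h_1)=2q_0$.

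For the inductive step, assume $T_{n-1}={\rm tr}(h_1)\cdots{\rm tr}(h_{n-1})$. Split the $2^n$ terms of $T_n$ according to the choice of the last factor $q_n\in\{h_n,\overline{h_n}\}$. Each term has the form $(q_1\cdots q_{n-1})q_n$. Holding $q_1,\dots,q_{n-1}$ fixed and summing over the two choices of $q_n$, distributivity gives
\begin{equation*}
T_n=\sum_{q_i\in\{h_i,\overline{h_i}\},\ i\le n-1}(q_1\cdots q_{n-1})\bigl(h_n+\overline{h_n}\bigr)
=T_{n-1}\,{\rm tr}(h_n).
\end{equation*}
The inductive hypothesis then yields $T_n={\rm tr}(h_1)\cdots{\rm tr}(h_{n-1})\,{\rm tr}(h_n)$. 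This is a product of real numbers, so $T_n\in\mathbb{R}$.

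There is no real obstacle here; the one point needing care is the order of factors. I factor out the last factor $h_n+\overline{h_n}$ on the right, so the left-to-right order of the remaining product $q_1\cdots q_{n-1}$ is never disturbed and no commutativity of quaternions is used. Commutativity enters only when writing the product of traces, and that is harmless because each ${\rm tr}(h_i)$ is real and hence central in $\mathbb{H}$. By the same reasoning one could instead peel off $q_1$ on the left.
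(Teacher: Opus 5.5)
Your induction is correct and complete. Peeling off the last factor and using distributivity gives $T_n=T_{n-1}\,{\rm tr}(h_n)$ without ever reordering non-commuting factors, and the base case $T_1=h_1+\overline{h_1}={\rm tr}(h_1)$ closes the argument.

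The paper itself gives no proof of this lemma; it only cites \cite[Lemma 3.1]{kyrchei2008cramer}, and your induction is the standard argument for it. Your reading of the sum as running over all $2^n$ index choices is the right one. Under that reading, a real $h_i$ contributes $h_i+\overline{h_i}=2h_i$ rather than being counted once, and this is exactly what the application to the sets $T_\sigma$ in the proof of Theorem~\ref{HerPGdetmainproth} requires.
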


\section{\(\rm P\)-generalized determinant}
\label{Pgenrdetsdefsection}

This section provides the rigorous mathematical definition of the $\mathrm{P}$-generalized determinant and investigates its basic properties. To define it, we first establish cycle representation rules for permutations in the symmetric group $S_n$.

\begin{definition}
Let $S_n$ be the symmetric group on the set $[n]$.
\begin{enumerate}[(1)]
  \item
Define a function
\begin{equation*}
  {\mathrm{sel}}:\mathcal{Q}([n])\to[n],
\end{equation*}
such that ${\mathrm{sel}}(Q)\in Q$ for every nonempty $Q\in\mathcal{Q}([n])$.
We call this function a selector on $\mathcal{Q}([n])$ and refer to ${\mathrm{sel}}(Q)$ as the selected element of $Q$.

\item
Let $\vartriangle$ be a strict total order on $[n]$, and let $  {\mathrm{sel}}$ be a selector on $\mathcal{Q}([n])$.  For
  $\sigma\in S_n$,
define a strict total order $\mathrel{\curlywedge}_{\vartriangle}$  on $\mathrm{cyc}(\sigma)$ by
\begin{equation*}
  C\,\mathrel{\curlywedge}_{\vartriangle}\,D \iff
  {\rm sel}(\mathrm{supp}(C))\,  \vartriangle  \,{\rm sel}(\mathrm{supp}(D)),
\end{equation*}
where $C,D\in\mathrm{cyc}(\sigma)$ are two distinct cycles.
 To emphasize its dependence on $\mathrm{cyc}(\sigma)$, $\vartriangle$ and $  {\mathrm{sel}}$, we also write this order as ${\curlywedge}_{\vartriangle}\big(\mathrm{cyc}(\sigma), \vartriangle,  {\mathrm{sel}}\big)$.

\end{enumerate}
\end{definition}

\begin{definition}
Let $S_n$ be the symmetric group on the set $[n]$.
\begin{enumerate}[(1)]
\item
 Let $ {\mathrm{first}}$ be a selector on  $\mathcal{Q}([n])$,
 let  $\triangleleft$ be a strict total order on $[n]$, and let
$\prec_{\triangleleft}:= \prec_{\triangleleft}\big(\mathrm{cyc}(\sigma), \triangleleft, {\mathrm{first}}  \big) $ for $\sigma\in S_n$.
 Define  the left-order representation of the cycle $C\in\mathrm{cyc}(\sigma)$  based on the selector  ${\rm first}$ by
\begin{equation*}
 C_{\mathfrak{l}}= \big(c,
\sigma(c), \sigma^2(c), \dots ,
\sigma^{|\mathrm{supp}(C)|-1}(c)\big),
\end{equation*}
where $c={\mathrm{first}}(\mathrm{supp}(C))$.
 Furthermore, define the left-order cycle representation of the permutation $\sigma$ based on the selector ${\rm first}$ and the strict total order $\prec_{{\triangleleft}} $ by
\begin{equation}\label{cicdecsigmapereq}
  \sigma_l={(C_1)}_{\mathfrak{l}} (C_2)_{\mathfrak{l}}
\dots(C_k)_{\mathfrak{l}},
\end{equation}
where $C_1,\dots,C_k$ are all cycles of $\sigma$, arranged in increasing order
with respect to $\prec_{\triangleleft}$, i.e.,
\begin{equation*}
C_1\prec_{\triangleleft} C_2\prec_{\triangleleft}\cdots\prec_{\triangleleft} C_k.
\end{equation*}
 The rule that determines the left-order cycle representation  \eqref{cicdecsigmapereq}  of every permutation in $S_n$, based on the selector ${\rm first}$ and the strict total order $\prec_{{\triangleleft}}$, is called a left-order cycle representation rule. It is denoted by ${\rm P}_l(S_n,{\rm first},\prec_{{\triangleleft}})$ and abbreviated as ${\rm P}_l$.
    In particular, if, for every $\sigma\in S_n$, the selected element of the cycle $(C_1)_\mathfrak{l}$ in \eqref{cicdecsigmapereq} is fixed at a prescribed element $i\in[n]$, then this rule is denoted by ${\rm P}_{l\text{-}i}$.

\item
 Let $ {\mathrm{last}}$ be a selector on  $\mathcal{Q}([n])$,
let  $\triangleright$  be a  strict total order   on $[n]$,
 and let $\succ_{{\triangleright}}:=
 \succ_{{\triangleright}}
 \big(\mathrm{cyc}(\sigma),
 \triangleright, {\mathrm{last}}  \big) $ for $\sigma\in S_n$.
Define the right-order representation of the cycle $D \in\mathrm{cyc}(\sigma)$ based on the selector  ${\rm last}$ by
\begin{equation*}
 D_{\mathfrak{r}}= \big(
\sigma^{|\mathrm{supp}(D)|-1}(d) ,\dots , \sigma^2(d) , \sigma(d) , d  \big),
\end{equation*}
where $d={\mathrm{last}}(\mathrm{supp}(D))$.
 Furthermore, define the right-order cycle representation of the permutation $\sigma$ based on the selector ${\rm last}$ and the strict total order $\succ_{{\triangleright}}$ by
\begin{equation}\label{sigmadecrighteq}
  \sigma_r=(D_k)_{\mathfrak{r}} \dots (D_2)_{\mathfrak{r}} {(D_1)}_{\mathfrak{r}} ,
\end{equation}
where $D_1,\dots,D_k $ are all cycles of $\sigma$,
 arranged in decreasing order with respect to $\succ_{{\triangleright}}$, i.e.,
 \begin{equation*}
 D_k\succ_{{\triangleright}}  \ldots \succ_{{\triangleright}}
 D_2\succ_{{\triangleright}} D_1.
\end{equation*}
The rule that determines the right-order cycle representation \eqref{sigmadecrighteq} of every permutation in $S_n$, based on the selector ${\rm last}$ and the strict total order $\succ_{{\triangleright}}$, is called a right-order cycle representation rule. It is denoted by ${\rm P}_r(S_n,{\rm last},\succ_{{\triangleright}})$ and abbreviated as ${\rm P}_r$.
In particular, if, for every $\sigma\in S_n$, the selected element of the cycle $(D_1)_\mathfrak{r}$ in \eqref{sigmadecrighteq} is fixed at a prescribed element $j\in[n]$, then this special rule is denoted by ${\rm P}_{r\text{-}j}$.

\item
The left-order cycle representation rule ${\rm P}_l$ and the right-order cycle representation rule ${\rm P}_r$ are collectively referred to as cycle representation rules (or simply rules) and denoted generically by ${\rm P}$.

\end{enumerate}

\end{definition}

\begin{definition}\label{Pgdetdeinition}
Let $S_n$ be the symmetric group on  $[n]$, and  $A=[a_{ij}]\in\mathbb{H}^{n\times n}$.
\begin{enumerate}[(1)]
  \item
   Let ${\rm P}_l={\rm P}_l( S_n, {\rm first}, \prec_{{\triangleleft}})$ be a left-order cycle representation rule.
Under the rule ${\rm P}_l$, suppose that the left-order cycle representation  of a permutation $\sigma\in S_n$ is
\begin{equation}\label{sigmmaldeceq}
  \sigma_l=(i_{k_1},i_{k_1+1},...,i_{k_1+v_1})(i_{k_2},i_{k_2+1},...,i_{k_2+v_2})...
  (i_{k_u},i_{k_u+1},...,i_{k_u+v_u}),
\end{equation}
where $i_{k_1},...,i_{k_1+v_1},...,i_{k_u},...,i_{k_u+v_u}\in[n]$ are fixed and mutually distinct. Let
\begin{equation}\label{fasiggmaldeceq}
  f_A(\sigma_l)=
  (a_{i_{k_1},i_{k_1+1}}a_{i_{k_1+1},i_{k_1+2}}...a_{i_{k_1+v_1},i_{k_1}})
\cdots
  (a_{i_{k_u},i_{k_u+1}}a_{i_{k_u+1},i_{k_u+2}}...a_{i_{k_u+v_u},i_{k_u}}).
\end{equation}
Then the ${\rm P}_l$-generalized determinant of $A$, denoted by
$\mathrm{GDet}_{{\rm P}_l}(A)$, is defined by
\begin{equation*}
  \mathrm{GDet}_{{\rm P}_l}(A)=
  \sum_{\sigma\in S_n}\mathrm{sgn}(\sigma)f_A(\sigma_l).
\end{equation*}

  \item
  Let ${\rm P}_r={\rm P}_r( S_n, {\rm last}, \succ_{{\triangleright}})$ be a right-order cycle representation rule.
      Under the rule ${\rm P}_{r}$, suppose that the right-order cycle representation  of a permutation $\sigma\in S_n$ is
\begin{equation*}
  \sigma_r=(j_{k_u+v_u},...,j_{k_u+1},j_{k_u})...(j_{k_2+v_2},...,j_{k_2+1},j_{k_2})(j_{k_1+v_1},...,j_{k_1+1},j_{k_1}),
\end{equation*}
where $j_{k_u+v_u},...,j_{k_u},...,j_{k_1+v_1},...,j_{k_1}\in[n]$ are fixed and mutually distinct.
Let
\begin{equation*}
  f_A( \sigma_r)= (a_{j_{k_u},j_{k_u+ v_u}}...a_{j_{k_u+2},j_{k_u+1}}a_{j_{k_u+1},j_{k_u}})\cdots
(a_{j_{k_1},j_{k_1+ v_1}}...a_{j_{k_1+2},j_{k_1+1}}a_{j_{k_1+1},j_{k_1}}).
\end{equation*}
Then the ${\rm P}_{r}$-generalized determinant of $A$,
denoted by
$ {\rm GDet}_{{\rm P}_r} (A)$,
is defined by
\begin{equation*}
 {\rm GDet}_{{\rm P}_r}
 (A)=\sum\limits_{\sigma\in S_n} {\rm sgn}(\sigma)   f_A( \sigma_r).
\end{equation*}

\item The ${\rm P}_l$-generalized determinant ${\rm GDet}_{{\rm P}_l}(A)$ and the ${\rm P}_r$-generalized determinant ${\rm GDet}_{{\rm P}_r}(A)$  are collectively called the ${\rm P}$-generalized determinants, denoted by ${\rm GDet}_{{\rm P}}(A)$.

\end{enumerate}

\end{definition}

\begin{remark}\label{PGdetspecaseremark}
Under the notation of Definition~\ref{Pgdetdeinition},
 by choosing specific cycle representation rules, one recovers the classical quaternion determinants, as follows:
\begin{enumerate}[(1)]
\item Let ${\rm first}(Q)=\min(Q)$ for $Q\in \mathcal{Q}([n])$, and let $\triangleleft$ be the strict total order on $[n]$ defined by
\[
n\triangleleft n-1\triangleleft \ldots \triangleleft 2\triangleleft 1.
\]
Then, under the rule ${\rm P}_l(S_n, {\rm first}, \prec_{{\triangleleft}})$, the ${\rm P}_l$-generalized determinant coincides with the Moore determinant for a Hermitian matrix $A\in\mathbb{H}^{n\times n}$, i.e.,
\begin{equation*}
  {\rm GDet}_{{\rm P}_l}(A)={\rm Mdet}(A).
\end{equation*}

\item
   Let ${\rm first}(Q)=\max(Q)$ for $Q\in \mathcal{Q}([n])$, and let $\triangleleft$ be the strict total order on $[n]$ defined by
\[
n \triangleleft n-1\triangleleft \ldots \triangleleft 2\triangleleft 1.
\]
Then, under the rule ${\rm P}_l(S_n, {\rm first}, \prec_{{\triangleleft}})$, the ${\rm P}_l$-generalized determinant coincides with the Chen determinant, i.e.,
  \begin{equation*}
    {\rm GDet}_{{\rm P}_l}(A)={\rm Cdet}(A).
  \end{equation*}

\item Let ${\rm first}(Q)=\min(Q)$ for $Q\in \mathcal{Q}([n])$, and let $\triangleleft$ be the strict total order on $[n]$ defined by
\[
1\triangleleft 2\triangleleft \ldots n-1 \triangleleft n.
\]
Then, under the rule ${\rm P}_l(S_n,{\rm first},\prec_{{\triangleleft}})$, the ${\rm P}_l$-generalized determinant coincides with the Xie determinant for a Hermitian matrix $A\in\mathbb{H}^{n\times n}$, i.e.,
\[
{\rm GDet}_{{\rm P}_l}(A)={\rm Xdet}(A).
\]

\item\label{PGdetspecaseremarkitem4}
Let  $i\in[n]$ be a  row index,  let
\begin{equation*}
  {\rm first}(Q)= \begin{cases}
i, & i\in Q,\\
\min(Q), & i\notin Q
\end{cases}
\end{equation*}
 for $Q\in \mathcal{Q}([n])$, and let $\triangleleft$ be the strict total order on $[n]$ defined by
\[
i\triangleleft i_1 \triangleleft  i_2 \triangleleft \ldots \triangleleft i_{n-1},
\]
where $i_1, \ldots, i_{n-1} \in [n]\setminus \{i\}$ satisfy
$
   i_1 < i_2 < \ldots < i_{n-1}.
$
Under the rule ${\rm P}_{l\text{-}i}(S_n,{\rm first},\prec_{{\triangleleft}})$, the ${\rm P}_{l\text{-}i}$-generalized determinant coincides with the $i$-th row determinant, i.e.,
\[
{\rm GDet}_{{\rm P}_{l\text{-}i}}(A)={\rm rdet}_i(A).
\]
Note that  $ {\rm rdet}_1(A)={\rm Xdet}(A) $.

\item\label{PGdetspecaseremarkitem5}
Let  $j\in[n]$ be a  column  index,   let
  \begin{equation*}
    {\rm last}(Q)= \begin{cases}
j, & j\in Q,\\
\min(Q), & j\notin Q
\end{cases}
  \end{equation*}
   for $Q\in \mathcal{Q}([n])$,
   and let $\triangleright$ be the strict total order on $[n]$ defined by
\[
j_{n-1}\triangleright  \ldots \triangleright  j_2 \triangleright  j_1 \triangleright j,
\]
where $j_{n-1}, \ldots, j_1 \in [n]\setminus \{j\}$ satisfy
$
j_{n-1}>\ldots>j_2>j_1.
$
Under the rule ${\rm P}_{r\text{-}j}(S_n,{\rm last},\succ_{{\triangleright}})$, the ${\rm P}_{r\text{-}j}$-generalized determinant coincides with the $j$-th column determinant, i.e.,
\begin{equation*}
  {\rm GDet}_{{\rm P}_{r\text{-}j}}(A)={\rm cdet}_j(A).
\end{equation*}
\end{enumerate}
Given the generality of the rule ${\rm P}$, further
imposing different restrictions on it yields several definitions of quaternion determinants that have not yet been studied in depth, as follows:
\begin{enumerate}[(i)]

\item
 Let ${\rm first}(Q)=\max(Q)$ for $Q\in \mathcal{Q}([n])$, and let $\triangleleft$ be the strict total order on $[n]$ defined by
\[
1\triangleleft 2\triangleleft \ldots \triangleleft  n-1 \triangleleft n.
\]
Then, ${\rm GDet}_{{\rm P}_l}(A)$ is called the dual Moore determinant, denoted by ${\rm DMdet}(A)$.

\item  Let ${\rm last}(Q)=\min(Q)$ for $Q\in \mathcal{Q}([n])$, and let $\triangleright$ be the strict total order on $[n]$ defined by
\[
1\triangleright 2\triangleright \ldots \triangleright  n-1 \triangleright n.
\]
Then, ${\rm GDet}_{{\rm P}_{r}}(A)$ is called the column Moore determinant, denoted by ${\rm CMdet}(A)$.

\item
   Let ${\rm last}(Q)=\max(Q)$ for $Q\in \mathcal{Q}([n])$, and let $\triangleright$ be the strict total order on $[n]$ defined by
\[
n\triangleright n-1 \triangleright \ldots \triangleright  2 \triangleright 1.
\]
Then,
  ${\rm GDet}_{{\rm P}_{r}}(A)$ is called the dual column Moore determinant, denoted by ${\rm DCMdet}(A)$.

\item
Let  $i\in[n]$ be a  row index,  let
\begin{equation*}
  {\rm first}(Q)= \begin{cases}
i, & i\in Q,\\
\max(Q), & i\notin Q
\end{cases}
\end{equation*}
 for $Q\in \mathcal{Q}([n])$, and let $\triangleleft$ be the strict total order on $[n]$ defined by
\[
i\triangleleft i_1 \triangleleft  i_2 \triangleleft \ldots \triangleleft i_{n-1},
\]
where $i_1, \ldots, i_{n-1} \in [n]\setminus \{i\}$ satisfy
$
  i_1  > i_2 > \ldots >  i_{n-1}.
$
Then, ${\rm GDet}_{{\rm P}_{l\text{-}i}}(A)$ is called the dual $i$-th row determinant, denoted by ${\rm Drdet}_i(A)$. Moreover, ${\rm Drdet}_n(A)={\rm Cdet}(A)$.

\item
      Let ${\rm last}(Q)=\max(Q)$ for $Q\in \mathcal{Q}([n])$, and let $\triangleright$ be the strict total order on $[n]$ defined by
\[
1 \triangleright  2  \triangleright \ldots \triangleright  n- 1\triangleright n .
\]
  Then, ${\rm GDet}_{{\rm P}_{r}}(A)$ is called the column Chen determinant, denoted by ${\rm CCdet}(A)$.

\item
Let  $j\in[n]$ be a  column  index,   let
  \begin{equation*}
    {\rm last}(Q)= \begin{cases}
j, & j\in Q,\\
\max(Q), & j\notin Q
\end{cases}
  \end{equation*}
   for $Q\in \mathcal{Q}([n])$,
   and let $\triangleright$ be the strict total order on $[n]$ defined by
\[
j_{n-1}\triangleright  \ldots \triangleright  j_2 \triangleright  j_1 \triangleright j,
\]
where $j_{n-1}, \ldots, j_1 \in [n]\setminus \{j\}$ satisfy
$
j_{n-1}<\ldots<j_2<j_1,
$
    then ${\rm GDet}_{{\rm P}_{r\text{-}j}}(A)$ is called the dual $j$-th column determinant, denoted by ${\rm Dcdet}_j(A)$. Moreover, ${\rm Dcdet}_n(A)={\rm CCdet}(A)$.

\end{enumerate}

\end{remark}

\begin{example}
Let
\[
A=\begin{bmatrix}
\mathbf{i} & \mathbf{k}\\
\mathbf{i} & \mathbf{j}
\end{bmatrix}\in \mathbb{H}^{2\times 2}.
\]
Under the cycle representation rules listed in Remark \ref{PGdetspecaseremark}, we compute the corresponding determinants of \(A\).
The results are as follows:
\[
\begin{aligned}
&\mathrm{Mdet}(A)=a_{22}a_{11}-a_{12}a_{21}=-\mathbf{k}-\mathbf{j},
&&\mathrm{DMdet}(A)=a_{11}a_{22}-a_{21}a_{12}=\mathbf{k}+\mathbf{j},\\
&\mathrm{CMdet}(A)=a_{11}a_{22}-a_{12}a_{21}=\mathbf{k}-\mathbf{j},
&&\mathrm{DCMdet}(A)=a_{22}a_{11}-a_{21}a_{12}=-\mathbf{k}+\mathbf{j},\\
&\mathrm{rdet}_1(A)= \mathrm{Xdet}(A)=a_{11}a_{22}-a_{12}a_{21}=\mathbf{k}-\mathbf{j},
&&\mathrm{rdet}_2(A)=a_{22}a_{11}-a_{21}a_{12}=-\mathbf{k}+\mathbf{j},\\
&\mathrm{cdet}_1(A)=a_{22}a_{11}-a_{12}a_{21}=-\mathbf{k}-\mathbf{j},
&&\mathrm{cdet}_2(A)=a_{11}a_{22}-a_{21}a_{12}=\mathbf{k}+\mathbf{j},\\
&\mathrm{Drdet}_1(A)=a_{11}a_{22}-a_{12}a_{21}=\mathbf{k}-\mathbf{j},
&&\mathrm{Drdet}_2(A)=\mathrm{Cdet}(A)=a_{22}a_{11}-a_{21}a_{12}=-\mathbf{k}+\mathbf{j},\\
&\mathrm{Dcdet}_1(A)=a_{22}a_{11}-a_{12}a_{21}=-\mathbf{k}-\mathbf{j},
&&\mathrm{Dcdet}_2(A)=\mathrm{CCdet}(A)=a_{11}a_{22}-a_{21}a_{12}=\mathbf{k}+\mathbf{j}.
\end{aligned}
\]
\end{example}

Next,
analogously to the row- or column-expansion of classical complex  determinants,
we obtain equivalent characterizations of ${\rm GDet}_{{\rm P}_{l\text{-}i}}(A)$
and ${\rm GDet}_{{\rm P}_{r\text{-}j}}(A)$.

\begin{theorem}\label{cofatortheorem}
Let $A=[a_{xy}]\in\mathbb{H}^{n\times n}$ for all  $x,y\in[n]$.
\begin{enumerate}[(1)]
 \item\label{cofatortheoremitem1}

 Let $i\in[n]$, and let $\mathrm{P}_{l\text{-}i}(S_n,\mathrm{first},\prec_{\triangleleft})$ be a left-order cycle representation rule. Let $S_{n_1}$ be the symmetric group on the set $[n]_i:=[n]\setminus\{i\}$.
Suppose the following:
\begin{enumerate}[(i)]
    \item Let
    \begin{equation*}
        \operatorname{P}_{l_1}=\operatorname{P}_{l_1}(S_{n_1}, \operatorname{first}_1, \prec_{\triangleleft_1})
    \end{equation*}
    be a left-order cycle representation rule,
    where $\operatorname{first}_1(Q)=\operatorname{first}(Q)$ for all $Q\in\mathcal{Q}([n]_i)$, and $\triangleleft_1$ is a strict total order on $[n]_i$ defined by
    \[
    t_1\triangleleft_1 t_2\triangleleft_1\cdots\triangleleft_1 t_{n-1}
    \]
   for  $t_1,t_2,\dots,t_{n-1}\in[n]_i $ satisfying $t_1\triangleleft t_2\triangleleft\cdots\triangleleft t_{n-1}$.

    \item\label{cofatortheoremitem1ii} For each $j\in[n]_i$, let
    \begin{equation}\label{cofatortheoremplgj}
        \operatorname{P}_{l\text{-}j}=\operatorname{P}_{l\text{-}j}(S_{n_1},
        \operatorname{first}_2,\prec_{\triangleleft_2})
    \end{equation}
    be a left-order cycle representation rule,
    where
    \[
    \operatorname{first}_2(Q)=
    \begin{cases}
        j, & j\in\operatorname{supp}(Q),\\
        \operatorname{first}(Q), & j\notin\operatorname{supp}(Q)
    \end{cases}
    \]
    for all $Q\in\mathcal{Q}([n]_i)$, and $\triangleleft_2$ is a strict total order on $[n]_i$ defined by
    \[
    j\triangleleft_2 d_1\triangleleft_2 d_2\triangleleft_2\cdots\triangleleft_2 d_{n-2}
    \]
   for  $d_1,d_2,\dots,d_{n-2}\in[n]_i \setminus \{j\}$ satisfying  $d_1\triangleleft d_2\triangleleft\cdots\triangleleft d_{n-2}$.
\end{enumerate}
Then
\begin{equation}\label{cofatortheoremitem1eq}
\mathrm{GDet}_{\mathrm{P}_{l\text{-}i}}(A)=a_{i1}R_{i1}+a_{i2}R_{i2}+\cdots+a_{in}R_{in},
\end{equation}
where
\begin{equation}\label{cofatortheoremitem1Rijeq}
R_{ij}=
\begin{cases}
\mathrm{GDet}_{\operatorname{P}_{l_1}}(A^{i,i}), & j=i,\\
-\mathrm{GDet}_{\operatorname{P}_{l\text{-}j}}\left((A_{\cdot j}(a_{\cdot i}))^{i,i}\right), & j \in  [n]_i.
\end{cases}
\end{equation}

\item\label{cofatortheoremitem2} Let $j \in [n]$, and let ${\rm P}_{r\text{-}j}(S_n, {\rm last}, \succ_{{\triangleright}})$ be a right-order cycle representation rule.
    Let $S_{n_1}$ be the symmetric group on the set $[n]_j=[n]\setminus\{j\}$.
 Suppose the following:
\begin{enumerate}[(i)]
  \item  Let
  \begin{equation*}
    \operatorname{P}_{r_1}=\operatorname{P}_{r_1}(S_{n_1}, \operatorname{last}_1, \succ_{\triangleright_1})
  \end{equation*}
   be a right-order cycle representation rule,
   where $\operatorname{last}_1(Q)=\operatorname{last}(Q)$ for all $Q\in\mathcal{Q}([n]_j)$, and $\triangleright_1$ is a strict total order on $[n]_j$ defined by
  \[
  t_{n-1}\triangleright_1 \ldots \triangleright_1 t_2\triangleright_1 t_1
  \]
for $t_{n-1},\ldots,t_1\in[n]_j$ satisfying
  $t_{n-1}\triangleright \ldots \triangleright t_2\triangleright t_1$.

  \item\label{cofatortheoremitem2ii}  For each $i \in  [n]_j$, let
  \begin{equation}\label{cofatortheoremprgi}
    \operatorname{P}_{r\text{-}i}=\operatorname{P}
  _{r\text{-}i}(S_{n_1},\operatorname{last}_2,\succ_{\triangleright_2})
  \end{equation}
  be a right-order cycle representation rule,
  where
  \[
  \operatorname{last}_2(Q)=
  \begin{cases}
  i, & i\in\operatorname{supp}(Q),\\
  \operatorname{last}(Q), & i\notin\operatorname{supp}(Q)
  \end{cases}
  \]
  for $Q\in\mathcal{Q}([n]_j)$, and $\triangleright_2$ is a strict total order on $[n]_j$ defined by
  \[
  d_{n-2}\triangleright_2 \ldots \triangleright_2 d_2\triangleright_2 d_1\triangleright_2 i
  \]
for  $d_{n-2},\ldots,d_2, d_1\in[n]_j\setminus \{i\}$ satisfying
  $d_{n-2}\triangleright \ldots \triangleright d_2\triangleright d_1$.
\end{enumerate}
Then
\begin{equation}\label{cofatortheoremitem2eq}
  {\rm GDet}_{{\rm P}_{r\text{-}j}}(A)
  =L_{1j}a_{1j}+L_{2j}a_{2j}+\ldots+L_{nj}a_{nj},
\end{equation}
where
\begin{equation}\label{cofatortheoremitem2Lijeq}
  L_{ij}=\begin{cases}
    {\rm GDet}_{{\rm P}_{r_1}}(A^{j,j}), & i=j,\\[1mm]
    -{\rm GDet}_{{\rm P}_{r\text{-}i}}\left((A_{i\cdot}(a_{j\cdot}))^{j,j}\right), & i \in  [n]_j.
  \end{cases}
\end{equation}

\end{enumerate}

\end{theorem}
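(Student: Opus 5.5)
We prove part (1) by partitioning $S_n$ according to the value $\sigma(i)$ and matching each class with the terms of one cofactor; part (2) then follows by the mirror-image argument.

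Under the rule $\mathrm{P}_{l\text{-}i}$, the cycle containing $i$ is written first and starts at $i$. So for every $\sigma\in S_n$ the product $f_A(\sigma_l)$ begins with the factor $a_{i,\sigma(i)}$. Set $\Sigma_j=\{\sigma\in S_n:\sigma(i)=j\}$. Then
$$
\mathrm{GDet}_{\mathrm{P}_{l\text{-}i}}(A)=\sum_{j\in[n]}a_{ij}\sum_{\sigma\in\Sigma_j}\mathrm{sgn}(\sigma)\,g(\sigma),
$$
where $g(\sigma)$ is $f_A(\sigma_l)$ with the leading factor $a_{i,\sigma(i)}$ removed. It remains to identify each inner sum with $R_{ij}$.

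\emph{Case $j=i$.} Here $i$ is a fixed point, so the leading cycle is $(i)$, contributing $a_{ii}$. Restricting gives a bijection $\Sigma_i\to S_{n_1}$, $\sigma\mapsto\tau=\sigma|_{[n]_i}$, with $\mathrm{sgn}(\sigma)=\mathrm{sgn}(\tau)$. The remaining cycles of $\sigma$ are exactly those of $\tau$. They are written starting at $\mathrm{first}(\mathrm{supp}\,C)=\mathrm{first}_1(\mathrm{supp}\,C)$, and they are ordered by $\triangleleft$ on the selected elements, which is the order $\triangleleft_1$ restricted to $[n]_i$. Hence $g(\sigma)=f_{A^{i,i}}(\tau_{l_1})$, and the inner sum equals $\mathrm{GDet}_{\mathrm{P}_{l_1}}(A^{i,i})$.

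\emph{Case $j\neq i$.} For $\sigma\in\Sigma_j$, let the cycle through $i$ be $(i,j,c_2,\dots,c_m)$, so that
$$
g(\sigma)=a_{j c_2}\cdots a_{c_m i}\cdot(\text{products of the other cycles}).
$$
Define $\tau\in S_{n_1}$ by deleting $i$ from that cycle: $\tau$ has the cycle $(j,c_2,\dots,c_m)$ and agrees with $\sigma$ elsewhere. This is a bijection $\Sigma_j\to S_{n_1}$. Since $\sigma$ and $\tau$ have the same number of cycles on sets of sizes $n$ and $n-1$, we get $\mathrm{sgn}(\sigma)=-\mathrm{sgn}(\tau)$.

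Now let $B=(A_{\cdot j}(a_{\cdot i}))^{i,i}$. Its entries are $b_{xj}=a_{xi}$ and $b_{xy}=a_{xy}$ for $y\neq j$. The cycle factor of $\tau$ through $j$ is $b_{jc_2}\cdots b_{c_m j}=a_{jc_2}\cdots a_{c_m i}$, and when $m=1$ this reads $b_{jj}=a_{ji}$. The other cycles of $\tau$ avoid $j$ as a column index, so their factors in $B$ equal those in $A$.

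Under $\mathrm{P}_{l\text{-}j}$ the cycle containing $j$ comes first and starts at $j$. Every other cycle starts at $\mathrm{first}(\mathrm{supp}\,C)$, and these cycles are ordered by $\triangleleft$ restricted to $[n]_i\setminus\{j\}$. This is the same order and the same starting points that $\mathrm{P}_{l\text{-}i}$ uses for the cycles of $\sigma$ after the $i$-cycle. Hence $g(\sigma)=f_B(\tau_{l\text{-}j})$, and the inner sum equals $-\mathrm{GDet}_{\mathrm{P}_{l\text{-}j}}(B)=R_{ij}$.

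\emph{Part (2).} The argument is symmetric. Under $\mathrm{P}_{r\text{-}j}$ the cycle containing $j$ is written last and ends at $j$, so every $f_A(\sigma_r)$ ends with the factor $a_{\sigma^{-1}(j),\,j}$. Group by $i=\sigma^{-1}(j)$ and delete $j$ from its cycle. Replacing row $i$ by row $j$ reroutes the entry $a_{j,\cdot}$, which matches the formula for $L_{ij}$.

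The main obstacle is the bookkeeping in the case $j\neq i$. One must check that the order in which $\mathrm{P}_{l\text{-}j}$ places the non-leading cycles of $\tau$ is exactly the order $\mathrm{P}_{l\text{-}i}$ uses for the corresponding cycles of $\sigma$, and that the starting points agree. This holds because both rules compare cycles by $\triangleleft$ applied to $\mathrm{first}$ of the support, and these supports are identical for $\sigma$ and $\tau$.
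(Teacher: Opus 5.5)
Your argument is correct and is essentially the paper's proof. The paper also splits $S_n$ into the permutations with leading cycle $(i)$ and those with leading cycle $(i,j,\ldots)$. It maps each class bijectively onto $S_{n_1}$ by deleting $i$, with the same sign bookkeeping: $(-1)^{n-u}=(-1)^{(n-1)-(u-1)}$ in the first case and $(-1)^{n-u}=-(-1)^{(n-1)-u}$ in the second. It then matches the monomials with those of $A^{i,i}$ and $(A_{\cdot j}(a_{\cdot i}))^{i,i}$, and treats part (2) as analogous.

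One minor point in your sketch of part (2) concerns the paper's convention $D_{\mathfrak{r}}=(\sigma^{|\mathrm{supp}(D)|-1}(d),\dots,\sigma(d),d)$. Under it, the last factor of $f_A(\sigma_r)$ is $a_{\sigma(j),j}$, not $a_{\sigma^{-1}(j),j}$, so the grouping should be by $i=\sigma(j)$. Your reading only reparametrizes by $\sigma\mapsto\sigma^{-1}$ and does not affect the argument.
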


\begin{proof}
\begin{enumerate}[(1)]
  \item By Definition \ref{Pgdetdeinition},  there exist  $R_{i1}, ..., R_{in} \in \mathbb{H} $ such that
  \begin{equation*}
      {\rm GDet}_{{\rm P}_{l\text{-}i}}(A)=a_{i1}R_{i1}+a_{i2}R_{i2}+\ldots+a_{in}R_{in}.
  \end{equation*}
  Then, we only need to prove that \eqref{cofatortheoremitem1Rijeq} hold.
  First, we prove that
  \begin{equation}\label{cofatortheoremitem1eqfirst}
    R_{ii}=
    {\rm GDet}_{{\rm P}_{l_1}}(A^{i,i}).
  \end{equation}
Indeed, under the rule ${\rm P}_{l\text{-}i}$, let
$\widetilde{S_n}=\{\sigma\in S_n:\sigma_l\text{ starts with }(i)\}$,
that is, if $\sigma\in\widetilde{S_n}$, then
\[
\sigma_l=(i)(i_{k_2},i_{k_2+1},\ldots,i_{k_2+v_2})\ldots(i_{k_u},i_{k_u+1},\ldots,i_{k_u+v_u}).
\]
Then, for $\sigma\in\widetilde{S_n}$, the monomial $f_A(\sigma_l)$ begins on the left with $a_{ii}$. The sum of all such monomials in ${\rm GDet}_{{\rm P}_{l\text{-}i}}(A)$  is
\begin{align*}
  \sum_{{\sigma}\in \widetilde{S_n}} {\rm sgn}(\sigma)f_{A}(\sigma_l) = & \sum_{{\sigma}\in \widetilde{S_n}} (-1)^{n-u} a_{ii}
 \big(a_{i_{k_2},i_{k_2+1}} \ldots a_{i_{k_2+v_2},i_{k_2} }\big) \ldots \big( a_{i_{k_u},i_{k_u+1}} \ldots a_{i_{k_u+v_u}, i_{k_u}}\big) \\
 =& a_{ii}  \left(\sum_{{\sigma}\in \widetilde{S_n}} (-1)^{n-u}
 \big(a_{i_{k_2},i_{k_2+1}} \ldots a_{i_{k_2+v_2},i_{k_2} }\big) \ldots
 \big( a_{i_{k_u},i_{k_u+1}} \ldots a_{i_{k_u+v_u}, i_{k_u}}\big) \right)
 \\
 = &a_{ii} R_{ii}.
\end{align*}
Note that the left-order cycle representations under ${\rm P}_{l\text{-}i}$
and ${\rm P}_{l_1}$ induce a bijection between the permutations in
$\widetilde{S_n}$ and $S_{n_1}$, via the map
 \begin{equation*}
 \Psi :   \widetilde{S_n}  \rightarrow  S_{n_1}  \quad \text{ by } \quad\sigma  \in   \widetilde{S_n}   \mapsto \Psi(  \sigma)  \in S_{n_1},
 \end{equation*}
 where  the left-order cycle  representations of  $\sigma$ and  $\Psi(  \sigma) $ are
  \begin{align*}
  & \sigma_l= (i)(i_{k_2},i_{k_2+1},...,i_{k_2+v_2})...
  (i_{k_u},i_{k_u+1},...,i_{k_u+v_u}),\\
  &  \big(\Psi (  \sigma)\big)_l= (i_{k_2},i_{k_2+1},...,i_{k_2+v_2})...
  (i_{k_u},i_{k_u+1},...,i_{k_u+v_u}),
 \end{align*}
 respectively.  Then,
\begin{align*}
  R_{ii} &=  \sum_{{\sigma}\in \widetilde{S_n}} (-1)^{n-u}
 \big(a_{i_{k_2},i_{k_2+1}} \ldots a_{i_{k_2+v_2},i_{k_2} }\big) \ldots
 \big( a_{i_{k_u},i_{k_u+1}} \ldots a_{i_{k_u+v_u}, i_{k_u}}\big) \\
 &  =   \sum_{ \Psi (  \sigma)\in {S_{n_1}}} (-1)^{n-u}
 \big(a_{i_{k_2},i_{k_2+1}} \ldots a_{i_{k_2+v_2},i_{k_2} }\big) \ldots
 \big( a_{i_{k_u},i_{k_u+1}} \ldots a_{i_{k_u+v_u}, i_{k_u}}\big)
 \\
 &  =   \sum_{ \Psi (  \sigma)\in {S_{n_1}}} (-1)^{(n-1)-(u-1)}
 \big(a_{i_{k_2},i_{k_2+1}} \ldots a_{i_{k_2+v_2},i_{k_2} }\big) \ldots
 \big( a_{i_{k_u},i_{k_u+1}} \ldots a_{i_{k_u+v_u}, i_{k_u}}\big)
 \\ &  =  \sum_{ \Psi (  \sigma) \in {S_{n_1}}}
  {\rm sgn}\big(\Psi (  \sigma)\big)f_{A^{i,i}}
  \left(\big(\Psi (\sigma)\big)_l \right) =
  {\rm GDet}_{{\rm P}_{l_1}}\big(A^{i,i}\big).
\end{align*}
 Next,  for  $j \in [n]_i$,  we prove that
 \begin{equation}\label{cofatortheoremitem1eqsecond}
   R_{ij}= -{\rm GDet}_{{\rm P}_{l\text{-}j}}\left((A_{\cdot j}(a_{\cdot i}))^{i,i}\right).
 \end{equation}
 Indeed,
under the rule ${\rm P}_{l\text{-}i}$,
 let
 $\widehat{S_n}$ be the set of permutations
in $S_n$ whose first left-order cycle is of the form $(i,j,\ldots)$,
that is, if ${\sigma}\in \widehat{S_n}$, then
     \begin{equation*}
       {\sigma}_l =(i,j,i_{k_1+2},..., i_{k_1+v_1} )(i_{k_2},i_{k_2+1},...,i_{k_2+v_2})...
  (i_{k_u},i_{k_u+1},...,i_{k_u+v_u}).
     \end{equation*}
     Then,  for $\sigma\in\widehat{S_n}$,
     the monomial $f_A(\sigma_l)$ begins on the left with $a_{ij}$. The sum of all such monomials in ${\rm GDet}_{{\rm P}_{l\text{-}i}}(A)$ is
       \begin{align*}
   &\sum_{{\sigma}\in \widehat{S_n}} {\rm sgn}(\sigma)f_{A}(\sigma_l)\\
 =& \sum_{{\sigma}\in \widehat{S_n}} (-1)^{n-u} \big(a_{ij} a_{j, i_{k_1+2}},...,  a_{  i_{k_1+v_1},i }  \big)
 \big(a_{i_{k_2},i_{k_2+1}} \ldots a_{i_{k_2+v_2},i_{k_2} }\big) \ldots \big( a_{i_{k_u},i_{k_u+1}} \ldots a_{i_{k_u+v_u}, i_{k_u}}\big) \\
= & a_{ij}
 \left(\sum_{{\sigma}\in \widehat{S_n}} (-1)^{n-u}
 \big( a_{j, i_{k_1+2}},...,  a_{  i_{k_1+v_1},i }  \big)
 \big(a_{i_{k_2},i_{k_2+1}} \ldots a_{i_{k_2+v_2},i_{k_2} }\big) \ldots
 \big( a_{i_{k_u},i_{k_u+1}} \ldots a_{i_{k_u+v_u}, i_{k_u}}\big) \right)\\
 = &a_{ij} R_{ij}.
\end{align*}
Note that there is also a bijection $\Phi$ between $\widehat{S_n}$ and
$S_{n_1}$.   Specifically, $\Phi: \widehat{S_n} \to S_{n_1}$ is defined by $\sigma \mapsto \Phi(\sigma)$, where the left-order cycle representations of $\sigma$ and $\Phi(\sigma)$ are
\begin{align*}
  & \sigma_l =(i,j,i_{k_1+2},..., i_{k_1+v_1} )(i_{k_2},i_{k_2+1},...,i_{k_2+v_2})...
  (i_{k_u},i_{k_u+1},...,i_{k_u+v_u}),\\
  &  \big(\Phi  (  \sigma) \big)_l= (j,i_{k_1+2},..., i_{k_1+v_1})(i_{k_2},i_{k_2+1},...,i_{k_2+v_2})...
  (i_{k_u},i_{k_u+1},...,i_{k_u+v_u}),
\end{align*}
respectively.
 Let $B= (A_{\cdot j}(a_{\cdot i}))^{i,i} =[b_{xy}]\in \mathbb{H} ^{(n-1)\times (n-1)}$, where
 \begin{equation*}
   b_{xy} =
   \begin{cases}
   a_{xy}, &  x\in [n]_i,~ y  \in [n]_i \setminus  \{j\},  \\
 a_{xi},  &    x\in [n]_i,~ y=j.
   \end{cases}
 \end{equation*}
Then,
\begin{align*}
  R_{ij}
  & =
   \sum_{{\sigma}\in \widehat{S_n}} (-1)^{n-u}
  \big( a_{j, i_{k_1+2}},...,  a_{  i_{k_1+v_1},i }  \big)
 \big(a_{i_{k_2},i_{k_2+1}} \ldots a_{i_{k_2+v_2},i_{k_2} }\big) \ldots
 \big( a_{i_{k_u},i_{k_u+1}} \ldots a_{i_{k_u+v_u}, i_{k_u}}\big)  \\
 &=  -1 \cdot \sum_{ \Phi({\sigma}) \in   S_{n_1} } (-1)^{(n-1)-u}
  \big( b_{j, i_{k_1+2}},...,  b_{  i_{k_1+v_1},j }  \big)
 \big(b_{i_{k_2},i_{k_2+1}} \ldots b_{i_{k_2+v_2},i_{k_2} }\big) \ldots
 \big( b_{i_{k_u},i_{k_u+1}} \ldots b_{i_{k_u+v_u}, i_{k_u}}\big)  \\
 &  =  - \sum_{  \Phi (  \sigma) \in {S_{n_1}}}
 {\rm sgn}\big( \Phi (  \sigma)\big)f_{B}\left( \big(\Phi (  \sigma)\big)_l\right)     =  -
 {\rm GDet}_{{\rm P}_{l\text{-}j}} (B).
\end{align*}
Combining \eqref{cofatortheoremitem1eqfirst} and \eqref{cofatortheoremitem1eqsecond}, we conclude that \eqref{cofatortheoremitem1Rijeq} hold.

  \item  The proof of \eqref{cofatortheoremitem2eq} is analogous to that of \eqref{cofatortheoremitem1eq} .

\end{enumerate}
\end{proof}

\begin{definition}\label{PliPljcofactordef}
Let $A=[a_{xy}]\in\mathbb{H}^{n\times n}$, where $x,y\in[n]$.
\begin{enumerate}[(1)]
 \item Let $i \in [n]$, and let ${\rm P}_{l\text{-}i}(S_n, {\rm first}, \prec_{{\triangleleft}})$ be a left-order cycle representation rule.
     We call $R_{ij}$, given by \eqref{cofatortheoremitem1Rijeq}, the right  ${\rm P}_{l\text{-}i}$-cofactor of $a_{ij}$, where $j\in[n]$.

\item Let $j \in [n]$, and let ${\rm P}_{r\text{-}j}(S_n, {\rm last}, \succ_{{\triangleright}})$ be a right-order cycle representation rule. We call $L_{ij}$, given by  \eqref{cofatortheoremitem2Lijeq}, the left ${\rm P}_{r\text{-}j}$-cofactor of $a_{ij}$, where $i\in [n]$.
\end{enumerate}
\end{definition}

\begin{remark}
In particular, under the hypotheses of Definition \ref{PliPljcofactordef}, if ${\rm P}_{l\text{-}i_1}$ is given by Item \eqref{PGdetspecaseremarkitem4} in Remark \ref{PGdetspecaseremark}, then the right ${\rm P}_{l\text{-}i}$-cofactor $R_{i_1,j}$ of $a_{i_1,j}$ reduces to the right $(ij)$-th cofactor defined in \cite[Lemma 2.1]{kyrchei2008cramer}. Similarly, if ${\rm P}_{r\text{-}j_1}$ is given by Item \eqref{PGdetspecaseremarkitem5} in Remark \ref{PGdetspecaseremark}, then the left ${\rm P}_{r\text{-}j}$-cofactor $L_{i,j_1}$ of $a_{i,j_1}$ reduces to the left $(ij)$-th cofactor defined in \cite[Lemma 2.2]{kyrchei2008cramer}.
\end{remark}

Some basic properties of the ${\rm P}$-generalized determinant are presented below.

\begin{theorem}\label{basicproPgdecTh}
Let $A\in\mathbb{H}^{n\times n}$, and let ${\rm P}$ be a cycle representation rule.
\begin{enumerate}[(1)]
  \item\label{basicproPgdecThitem1}  ${\rm GDet}_{{\rm P}}(A_{i \cdot}(0))  ={\rm GDet}_{{\rm P}}(A_{ \cdot j}(0))  =0$.
  \item  If $q \in \mathbb{H}$, then
  \begin{equation*}
     {\rm GDet}_{{\rm P}_{l\text{-}i}}\left(A_{i\cdot}(q\cdot a_{i\cdot})\right)
    = q\cdot {\rm GDet}_{{\rm P}_{l\text{-}i}}(A), ~
      {\rm GDet}_{{\rm P}_{r\text{-}j}}\left(A_{\cdot j}(a_{\cdot j}\cdot q)\right)
    = {\rm GDet}_{{\rm P}_{r\text{-}j}}(A)\cdot q.
  \end{equation*}

  \item\label{basicproPgdecThitem3}
Let  $t\in [n]$  and  $a_{t \cdot }=b+c$, where
    $b=[b_1,\ldots,b_n] \in \mathbb{H}^{1 \times n}$ and $c=[c_1,\ldots,c_n] \in \mathbb{H}^{1 \times n} $.
  Then,
  \[
    {\rm GDet}_{{\rm P}}(A)
    = {\rm GDet}_{{\rm P}}(A_{t\cdot}(b))
    + {\rm GDet}_{{\rm P}}(A_{t\cdot}(c)).
  \]

  \item   Let   $t\in [n]$  and $a_{\cdot t}=b+c$, where $b=[b_1,\ldots,b_n]^T \in \mathbb{H}^{n \times 1}$ and $c=[c_1,\ldots,c_n]^T\in \mathbb{H}^{n \times 1}$.
Then,
  \[
    {\rm GDet}_{{\rm P}}(A)
    = {\rm GDet}_{{\rm P}}(A_{\cdot t}(b))
    + {\rm GDet}_{{\rm P}}(A_{\cdot t}(c)).
  \]
\end{enumerate}
\end{theorem}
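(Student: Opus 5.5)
The plan is to work directly from Definition~\ref{Pgdetdeinition}, using one structural fact: for every $\sigma\in S_n$, the monomial $f_A(\sigma_l)$ (resp.\ $f_A(\sigma_r)$) contains exactly one entry from row $t$ and exactly one entry from column $t$. Indeed, in $\sigma_l$ each index $x\in[n]$ appears once as a row index, namely in the factor $a_{x,\sigma(x)}$. It also appears once as a column index, namely in the factor $a_{\sigma^{-1}(x),x}$. The same holds for $\sigma_r$. The position of that factor inside the ordered product depends on the rule $\mathrm{P}$, but it does not depend on the entries of $A$. So each monomial has the form $\alpha\,a_{t,\sigma(t)}\,\beta$. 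Here $\alpha,\beta$ are ordered products of entries outside row $t$, and they are determined by $\sigma$ and $\mathrm{P}$ alone.

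For item (1), take $A_{i\cdot}(0)$. Every monomial contains the factor $a_{i,\sigma(i)}=0$, so every term of the sum vanishes. The column case works the same way using $a_{\sigma^{-1}(j),j}=0$. Item (3) follows from the factorization above. Replacing row $t$ by $b+c$ changes only the middle factor, and distributivity gives
\[
\alpha(b_{\sigma(t)}+c_{\sigma(t)})\beta=\alpha b_{\sigma(t)}\beta+\alpha c_{\sigma(t)}\beta .
\]
We then sum with the common coefficient $\mathrm{sgn}(\sigma)$ over all $\sigma$. Crucially, the rule $\mathrm{P}$ fixes the ordering of factors independently of the matrix, so the same $\alpha,\beta$ appear in ${\rm GDet}_{{\rm P}}(A_{t\cdot}(b))$ and in ${\rm GDet}_{{\rm P}}(A_{t\cdot}(c))$. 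Item (4) is identical with the column factor $a_{\sigma^{-1}(t),t}$. Both items hold for $\mathrm{P}_l$ and for $\mathrm{P}_r$.

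Item (2) is the only step that genuinely uses noncommutativity, and I expect it to be the main point needing care. Under ${\rm P}_{l\text{-}i}$ the first cycle in $\sigma_l$ always starts at $i$. Therefore $f_A(\sigma_l)$ begins on the left with $a_{i,\sigma(i)}$, so in the factorization $\alpha=1$. This is exactly the content of Theorem~\ref{cofatortheorem}\eqref{cofatortheoremitem1}, namely
\[
{\rm GDet}_{{\rm P}_{l\text{-}i}}(A)=\sum_j a_{ij}R_{ij}.
\]
The cofactors $R_{ij}$ involve only rows in $[n]_i$, so they are unchanged when row $i$ is replaced by $q\,a_{i\cdot}$. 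Hence
\[
{\rm GDet}_{{\rm P}_{l\text{-}i}}(A_{i\cdot}(q a_{i\cdot}))=\sum_j q\,a_{ij}R_{ij}=q\,{\rm GDet}_{{\rm P}_{l\text{-}i}}(A).
\]
The column statement uses Theorem~\ref{cofatortheorem}\eqref{cofatortheoremitem2} in the same way. Under ${\rm P}_{r\text{-}j}$ every monomial ends on the right with $a_{\sigma^{-1}(j),j}$, giving
\[
{\rm GDet}_{{\rm P}_{r\text{-}j}}(A)=\sum_i L_{ij}a_{ij}.
\]
The $L_{ij}$ are independent of column $j$, so scaling that column on the right by $q$ multiplies the whole determinant by $q$ on the right.

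The one point to verify is that $R_{ij}$, as defined in \eqref{cofatortheoremitem1Rijeq}, really does not involve row $i$. This holds because $(A_{\cdot j}(a_{\cdot i}))^{i,i}$ deletes row $i$, and the entries moved into column $j$ are $a_{xi}$ with $x\neq i$. The analogous check applies to $L_{ij}$.
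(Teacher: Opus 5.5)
Your proof is correct and is exactly the direct-from-the-definition verification the paper has in mind; the paper omits the proof as immediate. Each ordered monomial contains exactly one factor from each row and each column, in a position fixed by $\sigma$ and ${\rm P}$ alone, which gives (1), (3) and (4). For (2), that factor is leftmost under ${\rm P}_{l\text{-}i}$ and rightmost under ${\rm P}_{r\text{-}j}$, so scaling works monomial by monomial and the detour through Theorem~\ref{cofatortheorem} is harmless but not needed. One cosmetic slip: with the paper's convention $D_{\mathfrak r}=(\sigma^{|{\rm supp}(D)|-1}(d),\ldots,\sigma(d),d)$, the factors of $f_A(\sigma_r)$ are $a_{\sigma(x),x}$. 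So the last factor under ${\rm P}_{r\text{-}j}$ is $a_{\sigma(j),j}$, not $a_{\sigma^{-1}(j),j}$, which changes nothing in the argument.
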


\begin{proof}
These properties follow immediately from Definition \ref{Pgdetdeinition}, hence the proofs are omitted.
\end{proof}

\begin{remark}
The properties of row-column determinants given in \cite[Theorems 2.1--2.5]{kyrchei2008cramer} are direct consequences of Theorem~\ref{basicproPgdecTh}.
\end{remark}

 A connection is established between the ${\rm P}_l$-generalized determinant ${\rm GDet}_{{\rm P}_{l}}(A)$ and the ${\rm P}_r$-generalized determinant ${\rm GDet}_{{\rm P}_{r}}(A)$.

\begin{definition}\label{GenassLRRLdecDef}
Let ${\rm P}_l(S_n,\mathrm{first},\prec_{\triangleleft})$ be a left-order cycle representation rule,
let $\triangleright$ be the reverse order of $\triangleleft$
on $[n]$,
and let $\succ_{\triangleright}:= \succ_{\triangleright}\big(   \mathrm{cyc}(\sigma), \triangleright, \mathrm{first}  \big)$
for any $\sigma \in S_n$.
Then,   the rule
${\rm P}_r(S_n,\mathrm{first}, \succ_{\triangleright})$ is called the right-order cycle
representation rule associated with ${\rm P}_l$, and is denoted by $\mathrm{P}_{r'} = (\mathrm{P}_l)'$.
That is, under the rule ${\rm P}_{l}$, if the left-order cycle representation of a permutation $\sigma\in S_n$ is
\begin{equation}\label{genLefoderPlisigmeq}
  \sigma_l=(i_1,i_{k_1+1},\ldots,i_{k_1+v_1})(i_{k_2},i_{k_2+1},\ldots,i_{k_2+v_2})\ldots(i_{k_u},i_{k_u+1},\ldots,i_{k_u+v_u}),
\end{equation}
then the right-order cycle representation of $\sigma$ under the rule ${\rm P}_{r'}$ is
\begin{equation}\label{genRightoderPrisigmeq}
  \sigma_{r}=(i_{k_u+v_u},\ldots,i_{k_u+1},i_{k_u})\ldots(i_{k_2+v_2},\ldots,i_{k_2+1},i_{k_2})(i_{k_1+v_1},\ldots,i_{k_1+1},i_1).
\end{equation}
Similarly,
given another right-order cycle representation rule ${\rm P}_{r}$,
one can define the left-order cycle representation rule associated with ${\rm P}_{r}$,  written as ${\rm P}_{l'}=({\rm P}_{r})'$.

\end{definition}

\begin{remark}
By Definition \ref{GenassLRRLdecDef}, it follows immediately that
\[
(({\rm P}_{l})')' = ({\rm P}_{r'})' = {\rm P}_{l},
\quad
(({\rm P}_{r})')' = ({\rm P}_{l'})' = {\rm P}_{r}.
\]
\end{remark}

\begin{theorem}\label{genGDetlGdetRlinkth}
Let $A\in\mathbb{H}^{n\times n}$ and  $i \in [n]$.  Then
\begin{equation*}
\big(  {\rm GDet}_{{\rm P}_{l}}(A^*) \big)^*=    {\rm GDet}_{{\rm P}_{r'}}(A).
\end{equation*}
\end{theorem}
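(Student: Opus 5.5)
We compare the two sides term by term, indexed by $\sigma\in S_n$. Since conjugation is a real-linear anti-automorphism of $\mathbb{H}$ and $\mathrm{sgn}(\sigma)\in\{\pm1\}$ is real, we have
\[
\big(\mathrm{GDet}_{\mathrm{P}_l}(A^*)\big)^*=\sum_{\sigma\in S_n}\mathrm{sgn}(\sigma)\,\overline{f_{A^*}(\sigma_l)}.
\]
It therefore suffices to show that $\overline{f_{A^*}(\sigma_l)}=f_A(\sigma_r)$ for every $\sigma\in S_n$, where $\sigma_l$ is the representation of $\sigma$ under $\mathrm{P}_l$ and $\sigma_r$ is its representation under $\mathrm{P}_{r'}=(\mathrm{P}_l)'$. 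No reindexing of the sum is needed: the same permutation $\sigma$ appears on both sides with the same sign.

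\textbf{Step 1 (the two representations match).} Fix $\sigma$ with $\sigma_l$ as in \eqref{genLefoderPlisigmeq}. By Definition \ref{GenassLRRLdecDef}, $\sigma_r$ has the form \eqref{genRightoderPrisigmeq}. We should check that this really is the $\mathrm{P}_{r'}$-representation.
\begin{enumerate}[(a)]
\item The selector $\mathrm{last}=\mathrm{first}$ picks the same element $i_{k_t}$ of each cycle.
\item The right-order representation of the cycle $D$ is $(\sigma^{v}(d),\ldots,\sigma(d),d)$ with $d=i_{k_t}$. This is exactly $(i_{k_t+v_t},\ldots,i_{k_t+1},i_{k_t})$.
\item Since $\triangleright$ is the reverse of $\triangleleft$, the relation $C\prec_{\triangleleft}D$ holds if and only if $D\succ_{\triangleright}C$ fails, that is, if and only if $C\succ_{\triangleright}D$. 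Hence the cycle ordered first by $\prec_\triangleleft$ (leftmost in $\sigma_l$) is the largest under $\succ_\triangleright$.
\end{enumerate}
Two things must be verified carefully here: that the largest cycle sits in the rightmost position of $\sigma_r$, and that this matches \eqref{genRightoderPrisigmeq}. I take the displayed form in Definition \ref{GenassLRRLdecDef} as given. This bookkeeping step is where I expect the only real risk of confusion, namely the left/right placement of cycles under the reversed order. Everything else is mechanical.

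\textbf{Step 2 (conjugating a monomial).} Write $A^*=[b_{xy}]$ with $b_{xy}=\overline{a_{yx}}$. Then
\[
f_{A^*}(\sigma_l)=\prod_{t=1}^{u}\Big(\overline{a_{i_{k_t+1},i_{k_t}}}\;\overline{a_{i_{k_t+2},i_{k_t+1}}}\cdots\overline{a_{i_{k_t},i_{k_t+v_t}}}\Big),
\]
with the product taken in order $t=1,\ldots,u$. Using $\overline{pq}=\overline{q}\,\overline{p}$ repeatedly, conjugation reverses the order of all factors. The blocks then appear in order $t=u,\ldots,1$, and within each block the factors read
\[
a_{i_{k_t},i_{k_t+v_t}}\cdots a_{i_{k_t+2},i_{k_t+1}}\,a_{i_{k_t+1},i_{k_t}}.
\]
This is precisely the $t$-th block of $f_A(\sigma_r)$ in Definition \ref{Pgdetdeinition}(2), under the identification $j_{k_t+s}=i_{k_t+s}$ coming from \eqref{genRightoderPrisigmeq}. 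The block order $u,\ldots,1$ also agrees with that definition. Fixed points are $1$-cycles, for which $f$ gives a single diagonal factor, and they behave consistently since $\overline{\overline{a_{xx}}}=a_{xx}$.

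\textbf{Step 3 (conclusion).} Summing $\mathrm{sgn}(\sigma)\overline{f_{A^*}(\sigma_l)}=\mathrm{sgn}(\sigma)f_A(\sigma_r)$ over $\sigma\in S_n$ gives the identity. The index $i$ in the statement plays no role.
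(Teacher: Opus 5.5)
Your argument is correct and is the paper's own proof. Like the paper, you write $A^*=[b_{xy}]$ with $b_{xy}=\overline{a_{yx}}$, take $\sigma_l$ and $\sigma_r$ in the forms \eqref{genLefoderPlisigmeq}--\eqref{genRightoderPrisigmeq} supplied by Definition~\ref{GenassLRRLdecDef}, reverse each conjugated monomial to get $\overline{f_{A^*}(\sigma_l)}=f_A(\sigma_r)$, and sum over $\sigma$ with the same real sign.

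One correction to Step 1(c): its orientation is inverted. Since $\triangleright$ is the converse of $\triangleleft$ and $\mathrm{last}=\mathrm{first}$, one has $C\succ_{\triangleright}D$ if and only if $D\prec_{\triangleleft}C$, not the failure of $D\succ_{\triangleright}C$ as you wrote. So the $\prec_{\triangleleft}$-least cycle $C_1$ is the $\succ_{\triangleright}$-least cycle $D_1$, which \eqref{sigmadecrighteq} places rightmost, exactly as in \eqref{genRightoderPrisigmeq}; your bookkeeping worry therefore resolves cleanly. Your reading would instead make $C_1=D_k$ and put it leftmost, contradicting the display you then accept.
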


\begin{proof}
Let $A=[a_{xy}]$ and $B=A^*=[b_{xy}]$, where
$b_{xy}= \overline{a_{yx}}$ for all $x,y\in [n]$.
Let $\sigma_l$ and $\sigma_r$ denote the left-order and right-order cycle
representations of $\sigma\in S_n$ under rules ${\rm P}_l$ and ${\rm P}_{r'}$,
respectively, as given in \eqref{genLefoderPlisigmeq} and
\eqref{genRightoderPrisigmeq}.
Then,
\begin{align*}
   \overline{f_B(\sigma_l)}
   &= \overline{(b_{i_1,i_{k_1+1}}b_{i_{k_1+1},i_{k_1+2}}~\ldots~ b_{i_{k_1+v_1},i_1})~\ldots~
     (b_{i_{k_u},i_{k_u+1}}b_{i_{k_u+1},i_{k_u+2}}~\ldots~ b_{i_{k_u+v_u},i_{k_u}})}\\
   &= \overline{(b_{i_{k_u},i_{k_u+1}}b_{i_{k_u+1},i_{k_u+2}}~\ldots~ b_{i_{k_u+v_u},i_{k_u}})}
      ~\ldots~
      \overline{(b_{i_1,i_{k_1+1}}b_{i_{k_1+1},i_{k_1+2}}~\ldots~ b_{i_{k_1+v_1},i_1})}\\
   &= (\overline{b_{i_{k_u+v_u},i_{k_u}}}~\ldots~
      \overline{b_{i_{k_u+1},i_{k_u+2}}}~\overline{b_{i_{k_u},i_{k_u+1}}})
      ~\ldots~
      (\overline{b_{i_{k_1+v_1},i_1}}~\ldots~
      \overline{b_{i_{k_1+1},i_{k_1+2}}}~\overline{b_{i_1,i_{k_1+1}}})\\
   &= (a_{i_{k_u},i_{k_u+v_u}}~\ldots~ a_{i_{k_u+2},i_{k_u+1}}a_{i_{k_u+1},i_{k_u}})
      ~\ldots~
      (a_{i_1,i_{k_1+v_1}}~\ldots~ a_{i_{k_1+2},i_{k_1+1}}a_{i_{k_1+1},i_1})\\
   &= f_A(\sigma_r).
\end{align*}
Hence,
\begin{align*}
\big({\rm GDet}_{{\rm P}_{l}}(A^*)\big)^*
&= \overline{{\rm GDet}_{{\rm P}_{l}}(A^*)}
= \overline{{\rm GDet}_{{\rm P}_{l}}(B)}
= \overline{\sum_{\sigma\in S_n} {\rm sgn}(\sigma) f_B(\sigma_l)}\\
&= \sum_{\sigma\in S_n} {\rm sgn}(\sigma) \overline{f_B(\sigma_l)}
= \sum_{\sigma\in S_n} {\rm sgn}(\sigma) f_A(\sigma_r)\\
&= {\rm GDet}_{{\rm P}_{r'}}(A),
\end{align*}
which completes the proof.
\end{proof}

\begin{remark}
Under the hypotheses of Theorem \ref{genGDetlGdetRlinkth}, since ${\rm GDet}_{{\rm P}_{l}}(A^*)\in\mathbb{H}$,  we have
\begin{equation}\label{gonetroPlrtraneq}
  \overline{{\rm GDet}_{{\rm P}_{l}}(A^*)}
=\big({\rm GDet}_{{\rm P}_{l}}(A^*)\big)^*
={\rm GDet}_{{\rm P}_{r'}}(A).
\end{equation}
Similarly,
\begin{equation}\label{gonetroPrltraneq}
  \overline{{\rm GDet}_{{\rm P}_{r}}(A^*)}
=\big({\rm GDet}_{{\rm P}_{r}}(A^*)\big)^*
={\rm GDet}_{{\rm P}_{l'}}(A).
\end{equation}
Equations \eqref{gonetroPlrtraneq} and \eqref{gonetroPrltraneq} imply that,
for the ${\rm P}$-generalized determinant, it is sufficient to restrict
attention to the ${\rm P}_l$-generalized determinant (or equivalently, the
${\rm P}_r$-generalized determinant).
In particular, if ${\rm P}_{l}$ is given by Item \eqref{PGdetspecaseremarkitem4} in Remark \ref{PGdetspecaseremark}, then
\[
{\rm GDet}_{{\rm P}_{l}}(A)={\rm rdet}_i(A),
\quad
{\rm GDet}_{{\rm P}_{r'}}(A)={\rm cdet}_i(A),
\]
and we further obtain  \cite[Theorem 2.6]{kyrchei2008cramer}, i.e.,
\[
\overline{{\rm rdet}_i(A^*)}
=\big({\rm rdet}_i(A^*)\big)^*
={\rm cdet}_i(A).
\]
\end{remark}

\section{$\rm P$-generalized determinant of quaternion Hermitian matrices}
\label{PgenrdetHersection}
The Moore,  Xie, Chen, and row-column determinants of quaternion Hermitian matrices all have rich properties.
The study of the ${\rm P}$-generalized determinant for quaternion Hermitian matrices  also constitutes  an important part of this work.
In this section, we first show that the ${\rm P}$-generalized determinant of a Hermitian quaternion matrix is a real number. As a key result of this paper, this shows that for a Hermitian quaternion matrix $A$, its ${\rm P}$-generalized determinant is independent of the choice of the rule ${\rm P}$ and depends only  on $A$.
For convenience, for $d=1,\ldots,u$, let
$\sigma_{k_d}=(i_{k_d},i_{k_d+1},\ldots,i_{k_d+v_d})$
denote the $d$-th cycle of $\sigma_l$ given in \eqref{sigmmaldeceq}, and let
\begin{equation*}
   f_A^c(\sigma_{k_d}) = a_{i_{k_d},i_{k_d+1}} \cdots a_{i_{k_d+v_d}, i_{k_d}},
\end{equation*}
denote the $d$-th factor of $f_A(\sigma_l)$ given in \eqref{fasiggmaldeceq}.

\begin{theorem}\label{HerPGdetmainproth}
Let $A\in\mathbb{H}^{n\times n}$ be Hermitian.
Then ${\rm GDet}_{{\rm P}}(A)$ is a real number independent of the cycle representation rule ${\rm P}$.

\end{theorem}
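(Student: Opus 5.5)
The plan is to show that ${\rm GDet}_{{\rm P}}(A)$ equals one fixed real expression, built only from $A$ and the cycle structure of the permutations. We treat ${\rm P}={\rm P}_l$ first. For a Hermitian $A$ we have $A^*=A$, so the case ${\rm P}_r$ then follows from Theorem \ref{genGDetlGdetRlinkth}. Indeed, ${\rm GDet}_{{\rm P}_r}(A)=\overline{{\rm GDet}_{{\rm P}_{l'}}(A)}$, which equals ${\rm GDet}_{{\rm P}_{l'}}(A)$ once the latter is known to be real.

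\textbf{Step 1: grouping permutations.} Fix a rule ${\rm P}_l$. Group the permutations $\sigma\in S_n$ into classes. Two permutations lie in the same class when they have the same family of cycle supports and differ only by reversing the orientation of some of their cycles of length $\ge 3$. Reversing a cycle does not change the sign, and it does not change the ordering $\prec_\triangleleft$ of the cycles, since that ordering depends only on the supports through ${\rm sel}$. The first element ${\rm first}(\mathrm{supp}(C))$ of each cycle is also unchanged. So in $f_A(\sigma_l)$ only the cycle factors $f^c_A(\sigma_{k_d})$ of the reversed cycles change.

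\textbf{Step 2: the conjugate identity.} If $\sigma_{k_d}=(c,x_1,\dots,x_v)$, its reversal with the same starting point is $(c,x_v,\dots,x_1)$. Its factor is
\[
a_{c,x_v}a_{x_v,x_{v-1}}\cdots a_{x_1,c}=\overline{a_{c,x_1}a_{x_1,x_2}\cdots a_{x_v,c}},
\]
using $a_{yx}=\overline{a_{xy}}$. So reversing a cycle replaces $h_d:=f^c_A(\sigma_{k_d})$ by $\overline{h_d}$. Cycles of length $1$ give $a_{cc}\in\mathbb{R}$. Cycles of length $2$ give $a_{cx}a_{xc}=|a_{cx}|^2\in\mathbb{R}$, and their reversal is the same cycle, so it is not counted twice.

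\textbf{Step 3: summing a class.} Summing over a class therefore gives ${\rm sgn}(\sigma)$ times the ordered sum $\sum q_1\cdots q_m$, where $q_d\in\{h_d,\overline{h_d}\}$ runs over the long cycles in their $\prec_\triangleleft$ order. The real factors from short cycles commute and can be pulled out. By Lemma \ref{kyrchei2008cramerLemma31} this sum equals $\prod_d {\rm tr}(h_d)$.

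\textbf{Step 4: independence of the rule.} ${\rm tr}(h_d)$ does not depend on the starting point of the cycle, because ${\rm tr}(xy)={\rm tr}(yx)$. It also does not depend on the orientation, because ${\rm tr}(\overline h)={\rm tr}(h)$. Moreover, a product of real numbers is independent of order. Hence
\[
{\rm GDet}_{{\rm P}_l}(A)=\sum_{\text{classes}}{\rm sgn}\prod_{\text{short}}(\text{real factor})\prod_{\text{long}}{\rm tr}(h_d)\in\mathbb{R}.
\]
This formula involves neither ${\rm first}$ nor $\triangleleft$, so the value is real and independent of ${\rm P}_l$.

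\textbf{Step 5: the right-order rules.} By the remark after Step 1, ${\rm GDet}_{{\rm P}_r}(A)={\rm GDet}_{{\rm P}_{l'}}(A)$, which equals the same value.

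The main obstacle is the bookkeeping in Step 3. We must check that the orientation-reversal classes partition $S_n$ into blocks of size exactly $2^m$, where $m$ is the number of cycles of length $\ge3$. We must also check that within a block the positions of the factors stay fixed, because the cycle order and the starting points depend only on the supports. Only then is the block sum literally of the form $T_m$ in Lemma \ref{kyrchei2008cramerLemma31}.
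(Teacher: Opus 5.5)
Your proposal is correct and follows essentially the same route as the paper. You group permutations into orientation-reversal classes of size $2^m$. You use the Hermitian conjugation identity and Lemma~\ref{kyrchei2008cramerLemma31} to turn each class sum into a product of traces, and cyclic invariance of the trace removes the dependence on starting points. You then reduce ${\rm P}_r$ to ${\rm P}_{l'}$ via Theorem~\ref{genGDetlGdetRlinkth}. The only cosmetic difference is that you write the value as a single rule-free sum over classes, whereas the paper compares two left-order rules class by class.
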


\begin{proof}
Let $A=[a_{xy}]$ and $A=A^*$, where  $\overline{a_{xy}}=a_{yx}$ for all $x,y\in[n]$. We complete the proof in four  steps.
\begin{enumerate}[(I)]
  \item\label{fistrealnitem} Let ${\rm P}_l={\rm P}_l(S_n, \operatorname{first}, \prec_{\triangleleft})$ be a left-order cycle representation rule. First, we prove that
\begin{equation*}
  {\rm GDet}_{{\rm P}_l}(A)\in\mathbb{R}.
\end{equation*}
Indeed, under the rule ${\rm P}_l$, let the left-order cycle representation of $\sigma\in S_n$ be
\begin{equation}\label{Plsigamlcfeneq}
  \sigma_l = \sigma_{k_1}\sigma_{k_2}\ldots\sigma_{k_u},
\end{equation}
where $  \sigma_{k_d}=(i_{k_d},i_{k_d+1},\ldots,i_{k_d+v_d})$ for $d=1,2,\ldots,u$.
\begin{enumerate}[(i)]
  \item\label{firstitemi} Suppose $v_d\leq 1$ for all $d=1,2,\ldots,u$. Then
  \begin{equation}\label{fAsigkdinReq}
      f_A^c(\sigma_{k_d})=
      \begin{cases}
      a_{k_d,k_d}, & v_d=0,\\
      \vert a_{k_d,k_d+1} \vert^2, & v_d=1,
      \end{cases}
  \end{equation}
  which implies $f_A^c(\sigma_{k_d})\in\mathbb{R}$ and hence $f_A(\sigma_l)\in\mathbb{R}$.

  \item\label{firstitemii} Assume that  there exists at least one index  $v_d$ in \eqref{Plsigamlcfeneq} such that  $v_d\geq2$.
      Let $v_{w_1},\ldots,v_{w_p}\geq2$, where $p\geq1$ and ${w_1},\ldots,{w_p}\in\{1,\ldots,u\}$ satisfy
   $
     i_{k_{w_1}}\triangleleft\ldots\triangleleft i_{k_{w_p}}
$.
  Then,  for $z\in\{1,\ldots,p\}$, we have
  \begin{equation}\label{Fcongongesigameq}
  \begin{aligned}
         \overline{f_A^c(\sigma_{k_{w_z}})}
     &=\overline{a_{i_{k_{w_z}},i_{k_{w_z}+1}}a_{i_{k_{w_z}+1},i_{k_{w_z}+2}}~\ldots ~ a_{i_{k_{w_z}+v_{w_z}},i_{k_{w_z}}}}\\
     &=\overline{a_{i_{k_{w_z}+v_{w_z}},i_{k_{w_z}}}}~\ldots ~
       \overline{a_{i_{k_{w_z}+1},i_{k_{w_z}+2}}}~
       \overline{a_{i_{k_{w_z}},i_{k_{w_z}+1}}}\\
     &=a_{i_{k_{w_z}},i_{k_{w_z}+v_{w_z}}}~\ldots ~
       a_{i_{k_{w_z}+2},i_{k_{w_z}+1}}a_{i_{k_{w_z}+1},i_{k_{w_z}}}\\
     &=f_A^c(\overline{\sigma_{k_{w_z}}}),
  \end{aligned}
  \end{equation}
  where $\overline{\sigma_{k_{w_z}}}=(i_{k_{w_z}},i_{k_{w_z}+v_{w_z}},\ldots,i_{k_{w_z}+1})$.
 Let
 \begin{equation}\label{Tsigmaset2peq}
T_{\sigma}=\left\{ \tau_l= \tau_{k_1}\tau_{k_2}\ldots\tau_{k_u}\;\middle|\;
    \tau_{k_d}=
    \begin{cases}
    \sigma_{k_d}, & d\in\{1,\ldots,u\}\setminus\{{w_1},\ldots,{w_p}\},\\
    \sigma_{k_d}\text{ or }\overline{\sigma_{k_d}}, & d\in\{{w_1},\ldots,{w_p}\},
    \end{cases}
\right\}
 \end{equation}
 be the set consisting of $2^p$ permutations in $S_n$. From Item \eqref{firstitemi}, it follows that
 \begin{equation}\label{alpha12channeq}
   \alpha=\prod_{d\in\{1,\ldots,u\}\setminus\{{w_1},\ldots,{w_p}\}}f_A^c(\tau_{k_d})\in\mathbb{R}.
 \end{equation}
Furthermore, using Lemma \ref{kyrchei2008cramerLemma31}, we obtain
\begin{equation}\label{sigmaRnueq}
  \begin{aligned}
 \sum_{\tau_l\in T_{\sigma}}{\rm sgn}(\tau)f_A(\tau_l)
  &={\rm sgn}(\sigma)\sum_{\tau_l\in T_{\sigma}}f_A^c(\tau_{k_1})\ldots f_A^c(\tau_{k_u})\\
  &={\rm sgn}(\sigma)\alpha \sum_{\tau_l\in T_{\sigma}}f_A^c(\tau_{k_{w_1}})\ldots f_A^c(\tau_{k_{w_p}})\\
  &=(-1)^{n-u}\alpha\,{\rm tr}(f_A^c(\tau_{k_{w_1}}))\ldots{\rm tr}(f_A^c(\tau_{k_{w_p}}))\in\mathbb{R}.
 \end{aligned}
\end{equation}

\end{enumerate}
If $v_d\leq 1$ for all $d=1,2,\ldots,u$, let $T_{\sigma}=\{\sigma_l\}$.
Then, for a permutation $\delta\in S_n$,
 $T_{\delta}=T_{\sigma}$ if and only
if $\delta_l\in T_{\sigma}$, and
$T_{\delta}\cap T_{\sigma}=\emptyset$ otherwise.
Hence,  there exist finitely many distinct permutations $\delta,\eta,\ldots,\gamma\in S_n$
such that \(S_{n}\) is the disjoint union of \(T_{\delta},T_{\eta},\dots,T_{\gamma}\).
Thus, from Items \eqref{firstitemi} and \eqref{firstitemii},  we derive
\begin{equation}\label{GDetPlAsumdaegeq}
  \begin{aligned}
  {\rm GDet}_{{\rm P}_l}(A)
   &=\sum_{\sigma\in S_n}{\rm sgn}(\sigma)f_A(\sigma_l)\\
  &=\sum_{\delta_l\in T_{\delta}}{\rm sgn}(\delta)f_A(\delta_l)
    +\sum_{\eta_l\in T_{\eta}}{\rm sgn}(\eta)f_A(\eta_l)
    +\ldots+\sum_{\gamma_l\in T_{\gamma}}{\rm sgn}(\gamma)f_A(\gamma_l)
    \in\mathbb{R}.
\end{aligned}
\end{equation}
  \item\label{secondPlQleqitem}
  Let ${\rm P}_l={\rm P}_l(S_n,\operatorname{first},\prec_{\triangleleft})$ and ${\rm Q}_l={\rm Q}_l(S_n,\operatorname{first}_Q,\prec_{\triangleleft_Q})$ be two distinct left-order cycle representation rules. We next prove that
   \begin{equation}\label{GDetPleqQleq}
     {\rm GDet}_{{\rm P}_l}(A)={\rm GDet}_{{\rm Q}_l}(A).
   \end{equation}
   Indeed, let $\sigma\in S_n$. Under the rule ${\rm P}_l$, let the left-order cycle representation of $\sigma$ be \eqref{Plsigamlcfeneq}. Under the rule ${\rm Q}_l$,  let the left-order cycle representation of $\sigma$ be
   \begin{equation*}
  \mathring{\sigma}_l=\mathring{\sigma}_{t_1}\mathring{\sigma}_{t_2}\ldots\mathring{\sigma}_{t_u},
   \end{equation*}
where $
  \mathring{\sigma}_{t_e}=(i_{t_e},i_{t_e+1},\ldots,i_{t_e+g_e})
$ for $e=1,2,\ldots,u$.
Note that there is a bijection $\Omega$ from
$\{\sigma_{k_d}: d=1,\ldots,u\}$ to
$\{\mathring{\sigma}_{t_e}: e=1,\ldots,u\}$ given by
\begin{equation}\label{Omegamapeq}
  \Omega(\sigma_{k_d})=\mathring{\sigma}_{t_e},
\end{equation}
where $\sigma_{k_d}$ and $\mathring{\sigma}_{t_e}$ are the same cycle,   possibly with different selected elements under their left-order representations.

\begin{enumerate}[(i)]
  \item\label{seconstepitem1} Suppose that $v_d\leq1$ for all $d=1,2,\ldots,u$. Then $g_e\leq1$ for all $e=1,2,\ldots,u$.
        For any $\sigma_{k_d}$, by Item \eqref{firstitemi} and \eqref{Omegamapeq}, we have
      \begin{equation*}
            f_A^c(\sigma_{k_d})=f_A^c(\Omega(\sigma_{k_d}))=f_A^c(\mathring{\sigma}_{t_e})\in\mathbb{R}.
      \end{equation*}
     Therefore,
       \begin{equation}
      f_A(\sigma_l)=\prod_{d=1}^{u}f_A^c(\sigma_{k_d})
      =\prod_{e=1}^{u}f_A^c(\mathring{\sigma}_{t_e})=f_A(\mathring{\sigma}_l).
      \end{equation}

  \item\label{seconstepitem2} Assume that in \eqref{Plsigamlcfeneq},  there exists at least one index  $v_d$ such that $v_d\geq2$.
Let $v_{w_1},\ldots,v_{w_p}\geq2$ be as given in Item \eqref{firstitemii}.
Using the map $\Omega$ given in \eqref{Omegamapeq}, let
\begin{equation}\label{Omegawzhzeq}
    \Omega({\sigma}_{k_{w_z}})=\mathring{\sigma}_{t_{h_z}},
\end{equation}
where $z=1,\ldots,p$. Then $g_{h_1},\ldots,g_{h_p}\geq2$, where ${h_1},\ldots,{h_p}\in\{1,\ldots,u\}$ and $p\geq1$. Similarly to Item \eqref{firstitemii}, let
 \begin{align*}
T_{\mathring{\sigma}}=\left\{\mathring{\tau}_{t_1}\mathring{\tau}_{t_2}\ldots\mathring{\tau}_{t_u}\;\middle|\;
    \mathring{\tau}_{t_e}=
    \begin{cases}
    \mathring{\sigma}_{t_e}, & e\in\{1,\ldots,u\}\setminus\{{h_1},\ldots,{h_p}\},\\
    \mathring{\sigma}_{t_e}\text{ or }\overline{\mathring{\sigma}_{t_e}}, & e\in\{{h_1},\ldots,{h_p}\},
    \end{cases}
\right\},
 \end{align*}
 where $\overline{\mathring{\sigma}_{t_e}}=(i_{t_e},i_{t_e+g_e},\ldots,i_{t_e+1})$. Let
   \begin{equation*}
  \mathring{\alpha}=\prod_{e\in\{1,\ldots,u\}\setminus\{{h_1},\ldots,{h_p}\}}
  f_A^c(\mathring{\tau}_{t_e})\in\mathbb{R}.
 \end{equation*}
 By Item \eqref{seconstepitem1}, we have $\mathring{\alpha}=\alpha$, where $\alpha$ is given in \eqref{alpha12channeq}. From \eqref{Omegawzhzeq}, there exist nonnegative integers $q_z$ such that
$
  i_{ t_{h_z}}=i_{k_{w_z}+q_z}
$
  for $z=1,\ldots,p$.
Using the trace identity
\begin{equation}\label{quatraceproeq}
  \operatorname{tr}(q_1 q_2)=\operatorname{tr}( q_2q_1)
\end{equation}
 for $q_1, q_2 \in \mathbb{H}$, we derive
 \begin{align*}
 {\rm tr}\left(f_A^c(\tau_{k_{w_z}})\right)
 &={\rm tr}\left(a_{i_{k_{w_z}},i_{k_{w_z}+1}}\ldots a_{i_{k_{w_z}+q_z-1},i_{k_{w_z}+q_z}}
   a_{i_{k_{w_z}+q_z},i_{k_{w_z}+q_z+1}}\ldots a_{i_{k_{w_z}+v_{w_z}},i_{k_{w_z}}}\right)\\
 &={\rm tr}\left(\big(a_{i_{k_{w_z}},i_{k_{w_z}+1}}\ldots a_{i_{t_{h_z}-1},i_{t_{h_z}}}\big)
   \big(a_{i_{t_{h_z}},i_{t_{h_z}+1}}\ldots a_{i_{k_{w_z}+v_{w_z}},i_{k_{w_z}}}\big)\right)\\
 &={\rm tr}\left(\big(a_{i_{t_{h_z}},i_{t_{h_z}+1}}\ldots a_{i_{k_{w_z}+v_{w_z}},i_{k_{w_z}}}\big)
   \big(a_{i_{k_{w_z}},i_{k_{w_z}+1}}\ldots a_{i_{t_{h_z}-1},i_{t_{h_z}}}\big)\right)\\
 &={\rm tr}\left(a_{i_{t_{h_z}},i_{t_{h_z}+1}}\ldots a_{i_{k_{w_z}+v_{w_z}},i_{k_{w_z}}}
   a_{i_{k_{w_z}},i_{k_{w_z}+1}}\ldots a_{i_{t_{h_z}+g_{h_z}},i_{t_{h_z}}}\right)\\
 &={\rm tr}\left(f_A^c(\mathring{\tau}_{t_{h_z}})\right).
 \end{align*}
   Hence, by Lemma \ref{kyrchei2008cramerLemma31}, we obtain
\begin{equation*}
    \begin{aligned}
 \sum_{\mathring{\tau}_l\in T_{\mathring{\sigma}}}{\rm sgn}(\mathring{\tau})f_A(\mathring{\tau}_l)
  &={\rm sgn}({\sigma})\sum_{\mathring{\tau}_l\in T_{\mathring{\sigma}}}f_A^c(\mathring{\tau}_{t_1})\ldots f_A^c(\mathring{\tau}_{t_u})\\
  &=(-1)^{n-u}\mathring{\alpha}\,{\rm tr}(f_A^c(\mathring{\tau}_{t_{h_1}}))\ldots{\rm tr}(f_A^c(\mathring{\tau}_{t_{h_p}}))\\
  &=(-1)^{n-u}\alpha\,{\rm tr}(f_A^c(\tau_{k_{w_1}}))\ldots{\rm tr}(f_A^c(\tau_{k_{w_p}}))\\
  &=\sum_{\tau_l\in T_{\sigma}}{\rm sgn}(\tau)f_A(\tau_l).
  \end{aligned}
\end{equation*}
\end{enumerate}
Using Items \eqref{seconstepitem1}, \eqref{seconstepitem2}, and \eqref{GDetPlAsumdaegeq}, we conclude that \eqref{GDetPleqQleq} holds.

\item\label{thirdburleqitem} Let ${\rm P}_r={\rm P}_r(S_n,\operatorname{last},\succ_{\triangleright})$ be a right-order cycle representation rule. Since  $A=A^*$,
     using \eqref{gonetroPrltraneq} and Item \eqref{fistrealnitem},
    we obtain
\begin{equation*}
{\rm GDet}_{{\rm P}_r}(A)=\overline{{\rm GDet}_{{\rm P}_{l'}}(A^*)}={\rm GDet}_{{\rm P}_{l'}}(A).
\end{equation*}

 \item Finally,  let ${\rm P}$ and ${\rm Q}$ be two distinct cycle representation rules on $S_n$. By Items \eqref{fistrealnitem}--\eqref{thirdburleqitem}, we have
\begin{equation*}
  {\rm GDet}_{{\rm P}}(A)={\rm GDet}_{{\rm Q}}(A)\in\mathbb{R},
\end{equation*}
which completes the proof.
\end{enumerate}

\end{proof}

\begin{remark}\label{GPdeteqrcdeteqchendetremark}
Combining Theorem \ref{HerPGdetmainproth} and Remark \ref{PGdetspecaseremark}, for a Hermitian matrix $A\in\mathbb{H}^{n\times n}$,   we obtain
\begin{align*}
   {\rm GDet}_{{\rm P}}(A)
   &={\rm GDet}_{{\rm P}_{l\text{-}i}}(A)
    ={\rm GDet}_{{\rm P}_{r\text{-}j}}(A)
    ={\rm Mdet}(A)
    ={\rm Cdet}(A)
    ={\rm Xdet}(A)
    ={\rm rdet}_i(A)
    ={\rm cdet}_j(A)\\
   &={\rm DMdet}(A)
    ={\rm CMdet}(A)
    ={\rm DCMdet}(A)
    ={\rm Drdet}_i(A)
    ={\rm CCdet}(A)
    ={\rm Dcdet}_j(A),
\end{align*}
where $i=1,\ldots,n$ and $j=1,\ldots,n$.
In this case,  ${\rm GDet}_{{\rm P}}(A)$ is called the determinant of the Hermitian matrix $A$, denoted by ${\rm Det}(A)$.
Note that ${\rm Det}(A)={\rm det}(A)$ for a Hermitian matrix $A$, where
${\rm det}(A):={\rm rdet}_i(A)={\rm cdet}_j(A)$ for all $i,j=1,\ldots,n$
(see \cite[Remark 3.1]{kyrchei2008cramer}).
\end{remark}

A well-known fact for complex matrices is that if one row (or column) is replaced by another row (or column), then the determinant of the resulting matrix is necessarily zero. However, this conclusion fails for the ${\rm P}$-generalized determinant of
quaternion matrices under an unrestricted rule ${\rm P}$,
even for Hermitian matrices.

 \begin{example}
 let
\[
A=\begin{pmatrix}
1 & \mathbf{i} & \mathbf{j}\\
-\mathbf{i} & 1 & 0\\
-\mathbf{j} & 0 & 1
\end{pmatrix}, \quad
B=A_{2\cdot}(a_{1\cdot})=
\begin{pmatrix}
1 & \mathbf i & \mathbf j\\
1 & \mathbf i & \mathbf j\\
-\mathbf j & 0 & 1
\end{pmatrix},
\]
and let $B=[b_{xy}]$ for $x,y = 1,2,3$. Then,
\[
\begin{aligned}
\operatorname{rdet}_3 (B)
&= b_{33}b_{11}b_{22}
- b_{33}b_{12}b_{21}
- b_{31}b_{13}b_{22}
- b_{32}b_{23}b_{11}
+ b_{31}b_{12}b_{23}
+ b_{32}b_{21}b_{13}
=-2\mathbf{i} \neq 0.
\end{aligned}
\]
\end{example}

The following theorem imposes restrictions on the rule ${\rm P}$ to ensure that the ${\rm P}$-generalized determinant vanishes when a row or column of a quaternion Hermitian matrix is replaced by another.

\begin{theorem}\label{ideteoctworeqth}
Let \(A=A^*\in\mathbb{H}^{n\times n}\) and let \(i,j\in[n]\) with \(i\neq j\). Assume that the cycle representation rule \({\rm P}\) satisfies the following:
\begin{enumerate}[(I)]
  \item If \({\rm P}\) is the left-order cycle rule \({\rm P}_l(S_n,\operatorname{first},\prec_{\triangleleft})\), then:
    \begin{itemize}
      \item if \(i,j\in Q \) for \(Q\in\mathcal{Q}([n])\), then \(\operatorname{first} (Q) \in\{i,j\}\);
      \item if \(i\in Q_1 \) and \(j\in Q_2 \) for \(Q_1,Q_2\in\mathcal{Q}([n])\) with \(Q_1\neq Q_2\), then
          \begin{equation*}
            \operatorname{first} (Q_1)  = i \text{ or }\operatorname{first} (Q_2)  =j.
          \end{equation*}
    \end{itemize}
  \item If \({\rm P}\) is the right-order cycle rule \({\rm P}_r(S_n,\operatorname{last},\succ_{\triangleright})\), then:
    \begin{itemize}
      \item if \(i,j\in Q \) for \(Q\in\mathcal{Q}([n])\), then \(\operatorname{last} (Q) \in\{i,j\}\);

      \item if \(i\in Q_1 \) and \(j\in Q_2 \) for \(Q_1,Q_2\in\mathcal{Q}([n])\) with \(Q_1\neq Q_2\), then
                   \begin{equation*}
            \operatorname{last} (Q_1) = i \text{ or }\operatorname{last} (Q_2) = j.
          \end{equation*}
    \end{itemize}
\end{enumerate}
Then,
\begin{equation}\label{GDetPAjairowcolmaineq}
    {\rm GDet}_{{\rm P}}\left(A_{j\cdot}(a_{i\cdot})\right)
  ={\rm GDet}_{{\rm P}}\left(A_{\cdot j}(a_{\cdot i})\right)
  =0.
\end{equation}
\end{theorem}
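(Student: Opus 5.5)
The plan is to prove the row statement ${\rm GDet}_{{\rm P}}(B)=0$ for $B:=A_{j\cdot}(a_{i\cdot})=[b_{xy}]$ directly, and then get the column statement from it by conjugate transposition. The key point is that rows $i$ and $j$ of $B$ are equal, so $b_{iy}=b_{jy}=a_{iy}$ for all $y$. Every other entry of $B$ with row index outside $\{i,j\}$ equals the corresponding entry of $A$.

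\textbf{Sign-reversing pairing.} For the left rule ${\rm P}_l$, I would pair each $\sigma\in S_n$ with $\tau=\sigma\circ(i\,j)$, that is, $\tau(i)=\sigma(j)$, $\tau(j)=\sigma(i)$, and $\tau=\sigma$ elsewhere. This is an involution with ${\rm sgn}(\tau)=-{\rm sgn}(\sigma)$. If $i,j$ lie in one cycle $(i,a_1,\dots,a_m,j,c_1,\dots,c_k)$ of $\sigma$, then $\tau$ has the two cycles $(j,a_1,\dots,a_m)$ and $(i,c_1,\dots,c_k)$, and conversely. By the first hypothesis on ${\rm first}$, the merged cycle is written starting at $i$ or at $j$. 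Say it starts at $i$; the case $j$ is symmetric. Its factor is
\[
(b_{i a_1}\cdots b_{a_m j})(b_{j c_1}\cdots b_{c_k i}).
\]
Using $b_{ia_1}=b_{ja_1}$ and $b_{jc_1}=b_{ic_1}$, this equals $f^c_B(j,a_1,\dots,a_m)\,f^c_B(i,c_1,\dots,c_k)$, which is exactly the product of the two $\tau$-cycle factors. The second hypothesis on ${\rm first}$ (first$(Q_1)=i$ or first$(Q_2)=j$) is what should guarantee that the split cycles are also represented starting at $j$ and $i$ in a compatible way. I would check the boundary cases $m=0$ or $k=0$, where a $1$-cycle $(i)$ or $(j)$ appears, separately.

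\textbf{Main obstacle: position of the cycles.} Under $\prec_{\triangleleft}$ the two $\tau$-cycles need not be adjacent, or in the order $j$ then $i$, and other cycles may sit between or around them. I would handle this as in the proof of Theorem~\ref{HerPGdetmainproth}. Every cycle avoiding $i,j$ uses only entries of $A$, so its entries satisfy the Hermitian relation. I would partition $S_n$ into classes where each such cycle of length $\geq3$ is independently reversed. Summing over a class, Lemma~\ref{kyrchei2008cramerLemma31} replaces each such cycle factor by the real number ${\rm tr}(f^c)$. Cycles of length $\le 2$ avoiding $i,j$ are already real. Real scalars commute with everything, so within each class-sum only the relative order of the (at most two) cycles meeting $\{i,j\}$ matters.

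\textbf{Remaining order issue and cancellation.} When $i,j$ lie in two different cycles, I would still need to argue that their relative order does not change the contribution. I expect this to need either the hypothesis on ${\rm first}$ or a trace/cyclic argument via \eqref{quatraceproeq}, and this is the step I consider least certain. I would verify that the class decomposition is compatible with the involution $\sigma\mapsto\sigma\circ(i\,j)$, since reversal of cycles avoiding $i,j$ commutes with it. Paired classes then contribute opposite amounts, giving ${\rm GDet}_{{\rm P}_l}(B)=0$.

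\textbf{Right rules and the column statement.} For ${\rm P}_r$ I would use \eqref{gonetroPrltraneq}. Since $A=A^*$, we have $(A_{\cdot j}(a_{\cdot i}))^*=A_{j\cdot}(a_{i\cdot})$: column $j$ conjugated becomes row $j$, equal to $\overline{a_{\cdot i}}^{\,T}=a_{i\cdot}$. Hence ${\rm GDet}_{{\rm P}_r}(A_{\cdot j}(a_{\cdot i}))=\overline{{\rm GDet}_{{\rm P}_{l'}}(A_{j\cdot}(a_{i\cdot}))}=0$, because ${\rm P}_{l'}$ uses the selector ${\rm last}$ and so inherits the hypotheses. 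The right-rule row case and the left-rule column case follow in the same way, either by mirroring the pairing argument (right multiplication, columns) or by the analogous conjugation with \eqref{gonetroPlrtraneq}.
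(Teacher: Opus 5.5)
\textbf{The gap.} Your pairing $\sigma\leftrightarrow\sigma\circ(i\,j)$ and the merged-cycle identity are correct; this is the paper's map $\sigma\mapsto\sigma_{(3)}$. The step you mark as least certain, however, is a real gap, and in part of the hypothesis range it cannot be closed.

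The hypotheses do not make both split cycles start at $j$ and $i$. Together they say exactly that ${\rm first}$ is anchored at $i$ (${\rm first}(Q)=i$ whenever $i\in Q$) or anchored at $j$. Indeed, if ${\rm first}(Q_1)\neq i$ for some $Q_1\ni i$, then every $Q_2\ni j$ has ${\rm first}(Q_2)=j$: by the second hypothesis if $Q_2\neq Q_1$, and by the first if $Q_2=Q_1$. Nothing controls where the two split cycles sit under $\prec_{\triangleleft}$. After reversing only the cycles that avoid $i,j$, you are left with two noncommuting quaternion factors, and \eqref{quatraceproeq} alone cannot reorder them.

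\textbf{The missing idea (anchored at $j$).} This is what the paper does: also reverse the cycle $C_i$ through $i$. It avoids $j$, so its entries satisfy $\overline{b_{xy}}=b_{yx}$. Group four permutations: $\sigma$, the permutation $\sigma'$ with $C_i$ reversed, $\sigma\circ(i\,j)$, and $\sigma'\circ(i\,j)$. If $|C_i|\le 2$ the group collapses to a pair and the factor is already real. If ${\rm first}$ is anchored at $j$, the merged cycle is written from $j$ and takes the slot of $C_j$. Its factor is $H\,f^c_B(C_j)$, respectively $\overline{H}\,f^c_B(C_j)$, where $H$ is $C_i$ read from $i$. By cyclic invariance of the trace, the four terms add up to
\[
{\rm sgn}(\sigma)\bigl({\rm tr}(f^c_B(C_i))-{\rm tr}(H)\bigr)\Pi=0,
\]
where $\Pi$ is the ordered product of the remaining factors. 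This needs no information about the other cycles or about the order.

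\textbf{The $i$-anchored case is false.} Here the split $j$-cycle may be written from some $h\neq j$. The separated term then carries a factor $PQ$ while the merged term carries $QP$, and this cannot be repaired because the statement fails. Take the paper's Hermitian $A$ with rows $(1,\mathbf{i},\mathbf{j})$, $(-\mathbf{i},1,0)$, $(-\mathbf{j},0,1)$, set $(i,j)=(3,2)$, and let ${\rm P}={\rm P}_{l\text{-}3}$ from Remark~\ref{PGdetspecaseremark}(4), so that ${\rm GDet}_{\rm P}={\rm rdet}_3$. Both hypotheses hold, since ${\rm first}(Q)=3$ whenever $3\in Q$. Let $B=A_{2\cdot}(a_{3\cdot})$, whose second and third rows both equal $(-\mathbf{j},0,1)$. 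The only nonzero terms of ${\rm rdet}_3(B)$ are $-b_{33}b_{12}b_{21}=\mathbf{k}$ from $(3)(1\,2)$ and $b_{31}b_{12}b_{23}=\mathbf{k}$ from $(3\,1\,2)$. Hence ${\rm GDet}_{\rm P}(B)=2\mathbf{k}\neq 0$.

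These two terms are exactly one of your pairs: $\sigma=(1\,2)$ and $\sigma\circ(3\,2)$, with the $j$-cycle $\{1,2\}$ written from $1$. So your remark ``the case $j$ is symmetric'' fails. So does the paper's step ``without loss of generality ${\rm first}(Q)=j$ when $i,j\in Q$'': only row $j$ is replaced, so $i$ and $j$ are not interchangeable.

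\textbf{What survives.} The four-term argument proves the result under the stronger hypothesis ${\rm first}(Q)=j$ for all $Q\ni j$ (respectively ${\rm last}(Q)=j$ for right rules). This covers the rules ${\rm P}_{l\text{-}j}$ and ${\rm P}_{r\text{-}j}$ used later in the paper. Your reduction of the column and right-rule cases via \eqref{gonetroPrltraneq} is fine and matches the paper, but it only transfers this corrected version.
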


\begin{proof}
Let $A=[a_{xy}]$ with $a_{xy} = \overline{a_{yx}}$ for all $x,y \in [n]$.
\begin{enumerate}[{(1)}]
\item\label{ideteoctworeqthAjaiBitem1}
First, we prove that
\begin{equation}\label{GdetAjcaiceqzeroeq}
    {\rm GDet}_{{\rm P}}\left(A_{j\cdot}(a_{i\cdot})\right)
  =0.
\end{equation}
Let $B=A_{j\cdot}(a_{i\cdot})=[b_{xy}]$ for all $x,y \in [n]$,
where
\begin{equation}\label{bjyeqbiyeqaiyeq}
b_{xy}=
\begin{cases}
a_{iy}=b_{iy}, & x=j,\  y \in   [n], \\
a_{xy}=\overline{a_{yx}}=\overline{b_{yx}}, & x\ne j,\ y\ne j,\\
a_{xy}, & x\ne j,\ y=j,
\end{cases}
\end{equation}
by $A=A^*$.

\begin{enumerate}[(I)]
\item\label{ideteoctworeqthitemI}
Let ${\rm P}$ be a left-order cycle rule ${\rm P}_l(S_n,\operatorname{first},\prec_{\triangleleft})$.
Let the left-order cycle representation of a permutation $\sigma \in S_n$ under the rule ${\rm P}_l$ be $\sigma_l$ given in \eqref{Plsigamlcfeneq}.
Let $ \sigma _{k_I}, \sigma _{k_J} \in  \{ \sigma_{k_1},\sigma_{k_2},\ldots,\sigma_{k_u}\}$ satisfy $i \in {\rm supp} (\sigma _{k_I})$ and $j \in {\rm supp} (\sigma _{k_J})$.
We claim  that for $ \sigma_l$, there exist
   $\sigma_{(2)},   \sigma_{(3)},  \sigma_{(4)} \in S_n$
  with left-order cycle representations
   under ${\rm P}_l$ such that
\begin{equation}\label{sigma1234sumeqzeroeq}
  {\rm sgn}(\sigma)  f_{B}(\sigma_l)   +
   {\rm sgn}(\sigma_{(2)})    f_{B}(\sigma_{(2)})
    +{\rm sgn}(\sigma_{(3)})  f_{B}(\sigma_{(3)})
     +{\rm sgn}(\sigma_{(4)}) f_{B}(\sigma_{(4)}) = 0.
\end{equation}
   Without loss of generality,    assume that
   if \(i,j\in Q \) for \(Q\in\mathcal{Q}([n])\), then \(\operatorname{first} (Q) = j\).
  \begin{description}
    \item[Case 1.1:] Let $\sigma_{k_I}\neq\sigma_{k_J}$. Without loss of generality, assume $J<I$.
\begin{description}
  \item[Case 1.1.1:]\label{ideteoctworeqthcase11}
    Let
  $ \operatorname{first} ({\rm supp} (\sigma _{k_I} ))  =i  $ and
    $ \operatorname{first}  ({\rm supp} (\sigma _{k_J} ))   =j  $, i.e.,
    \begin{equation*}
      i_{k_I} =i  \quad \text{and} \quad  i_{k_J}=j.
    \end{equation*}
 Then
\begin{align*}
    \sigma_l&= \sigma_{k_1}\ldots\sigma_{k_J}\ldots\sigma_{k_I} \ldots \sigma_{k_u},
\end{align*}
where
\begin{align}
\sigma_{k_J}&=    (i_{k_J},\ldots,i_{k_J+v_J})=   (j,i_{k_J+1},\ldots,i_{k_J+v_J}) , \label{case01skJsKIeq}\\
\sigma_{k_I}& =(i_{k_I},\ldots,i_{k_I+v_I})  =(i,i_{k_I+1},\ldots,i_{k_I+v_I}).  \nonumber
\end{align}
    Let
   \begin{equation}\label{mathcalF123eq}
      \mathcal{F}_1 =  \prod \limits_{1\leq d <J}  f_{B}^c(\sigma_{k_d}) ,~
      \mathcal{F}_2 = \prod \limits_{ J< d <I}  f_{B}^c(\sigma_{k_d}),~
      \mathcal{F}_3 = \prod \limits_{ I< d \leq u}  f_{B}^c(\sigma_{k_d}),
    \end{equation}
where $  \mathcal{F}_1=1$ if $J=1$, $  \mathcal{F}_2=1$ if $I=J+1$,
and $  \mathcal{F}_3=1$ if $I=u$.
Then
\begin{equation}\label{Gdepeqz0case01f1}
 f_{B}(\sigma_l)=   \mathcal{F}_1  f_{B}^c(\sigma_{k_{J}})  \mathcal{F}_2   f_{B}^c(\sigma_{k_{I}})    \mathcal{F}_3.
\end{equation}
Let
\begin{equation*}
  \sigma_{(2)}= \sigma_{k_1}\ldots\sigma_{k_J}\ldots \overline{\sigma_{k_I}} \ldots \sigma_{k_u} \in S_n,
\end{equation*}
where $\overline{\sigma_{k_{I}}}=(i_{I},i_{k_{I}+v_{I}},\ldots,i_{k_{I}+1})$. In analogy with \eqref{Fcongongesigameq} and using \eqref{bjyeqbiyeqaiyeq}, we have
\begin{equation}\label{gongesigKieqfBeq}
  \overline{f_{B}^c (\sigma_{k_{I}}) } = f_{B}^c (\overline{\sigma_{k_{I}}}).
\end{equation}
It follows that $\sigma_{(2)}$ is some permutation in $S_n$  with the left-order cycle representation under ${\rm P}_l$.
Then
\begin{equation}\label{Gdepeqz0case01f2}
  \begin{aligned}
   f_{B}(\sigma_{(2)})= &  \mathcal{F}_1     f_{B}^c({\sigma_{k_J}} )  \mathcal{F}_2   f_{B}^c( \overline{{\sigma_{k_I}}} )   \mathcal{F}_3
 =
\mathcal{F}_1  f_{B}^c(\sigma_{k_{J}})\mathcal{F}_2 \overline{f_{B}^c(\sigma_{k_{I}})}\mathcal{F}_3.
\end{aligned}
\end{equation}
 Let
\begin{equation*}
  \sigma_{(3)}=  \sigma_{k_1}\ldots  \widetilde{\sigma_{k_J}}
 \ldots   \sigma_{k_{(I-1)}}    \sigma_{k_{(I+1)}} \ldots \sigma_{k_u},
\end{equation*}
where
\begin{equation*}
  \widetilde{\sigma_{k_J}}=(j, i_{k_{I}+1},...,i_{k_{I}+v_{I}},i, i_{k_{J}+1},...,i_{k_{J}+v_{J}}).
\end{equation*}
It follows that $\sigma_{(3)}$ is some permutation in $S_n$  with the left-order cycle representation under ${\rm P}_l$.
Using \eqref{bjyeqbiyeqaiyeq}, we have
\begin{equation}\label{Gdepeqz0case01f3}
\begin{aligned}
   f_{B}(\sigma_{(3)})= &  \mathcal{F}_1     f_{B}^c(\widetilde{\sigma_{k_J}} )  \mathcal{F}_2 \mathcal{F}_3
   \\=& \mathcal{F}_1 (b _{j,i_{k_{I}+1}}~\ldots ~ b_{i_{k_{I}+v_{I}},i} b_{i,i_{k_{J}+1}}~\ldots ~ b_{i_{k_{J}+v_{J}},j}) \mathcal{F}_2 \mathcal{F}_3
   \\=& \mathcal{F}_1(b _{i,i_{k_{I}+1}}~\ldots ~ b_{i_{k_{I}+v_{I}},i} ) ( b_{j,i_{k_{J}+1}}~\ldots ~ b_{i_{k_{J}+v_{J}},j})\mathcal{F}_2 \mathcal{F}_3
     \\=&  \mathcal{F}_1(b_{i_{k_{I}},i_{k_{I}+1}}~\ldots ~ b_{i_{k_{I}+v_{I}},i_{k_{I}}} ) (b_{i_{k_{J}},i_{k_{J}+1}}~\ldots ~ b_{i_{k_{J}+v_{J}},i_{k_{J}}})\mathcal{F}_2 \mathcal{F}_3
       \\=&   \mathcal{F}_1    f_{B}^c(\sigma_{k_{I}})    f_{B}^c(\sigma_{k_{J}})\mathcal{F}_2 \mathcal{F}_3.
\end{aligned}
\end{equation}
 Let
\begin{equation*}
  \sigma_{(4)}=  \sigma_{k_1}\ldots  \widehat{\sigma_{k_J}}
 \ldots   \sigma_{k_{(I-1)}}    \sigma_{k_{(I+1)}} \ldots \sigma_{k_u},
\end{equation*}
where
\begin{equation*}
  \widehat{\sigma_{k_J}}=(j, i_{k_{I}+v_{I}},...,i_{k_{I}+1},i, i_{k_{J}+1},...,i_{k_{J}+v_{J}}).
\end{equation*}
It follows that $\sigma_{(4)}$ is some permutation in $S_n$  with the left-order cycle representation under ${\rm P}_l$.
Using \eqref{bjyeqbiyeqaiyeq} and \eqref{gongesigKieqfBeq}, we have
\begin{equation}\label{Gdepeqz0case01f4}
\begin{aligned}
   f_{B}(\sigma_{(4)})= &  \mathcal{F}_1     f_{B}^c(\widehat{\sigma_{k_J}} )  \mathcal{F}_2 \mathcal{F}_3
     \\=&
   \mathcal{F}_1 (b _{j,i_{k_{I}+v_{I}}}~\ldots ~ b_{i_{k_{I}+1},i} b_{i,i_{k_{J}+1}}~\ldots ~ b_{i_{k_{J}+v_{J}},j}) \mathcal{F}_2 \mathcal{F}_3
   \\=& \mathcal{F}_1(b _{i,i_{k_{I}+v_{I}}}~\ldots ~ b_{i_{k_{I}+1},i} ) ( b_{j,i_{k_{J}+1}}~\ldots ~ b_{i_{k_{J}+v_{J}},j})\mathcal{F}_2 \mathcal{F}_3
     \\=&  \mathcal{F}_1(b_{i_{k_{I}},i_{k_{I}+v_{I}}}~\ldots ~ b_{i_{k_{I}+1},i_{k_{I}}} ) (b_{i_{k_{J}},i_{k_{J}+1}}~\ldots ~ b_{i_{k_{J}+v_{J}},i_{k_{J}}})\mathcal{F}_2 \mathcal{F}_3
       \\=&   \mathcal{F}_1    f_{B}^c( \overline{\sigma_{k_{I}}})    f_{B}^c(\sigma_{k_{J}})\mathcal{F}_2 \mathcal{F}_3
        \\=&   \mathcal{F}_1     \overline{f_{B}^c({\sigma_{k_{I}}}) } f_{B}^c(\sigma_{k_{J}})\mathcal{F}_2 \mathcal{F}_3.
\end{aligned}
\end{equation}
Using \eqref{Gdepeqz0case01f1}, \eqref{Gdepeqz0case01f2}, \eqref{Gdepeqz0case01f3}, and \eqref{Gdepeqz0case01f4}, we obtain
\begin{equation*}
\begin{aligned}
  &{\rm sgn}(\sigma)  f_{B}(\sigma_l)   +
   {\rm sgn}(\sigma_{(2)})    f_{B}(\sigma_{(2)})
    +{\rm sgn}(\sigma_{(3)})  f_{B}(\sigma_{(3)})
     +{\rm sgn}(\sigma_{(4)}) f_{B}(\sigma_{(4)})
  \\=    & (-1)^{n-u}   \mathcal{F}_1  f_{B}^c(\sigma_{k_{J}})  \mathcal{F}_2   f_{B}^c(\sigma_{k_{I}})    \mathcal{F}_3 +
(-1)^{n-u}  \mathcal{F}_1  f_{B}^c(\sigma_{k_{J}})\mathcal{F}_2 \overline{f_{B}^c(\sigma_{k_{I}})}\mathcal{F}_3
\\& +(-1)^{n-(u-1)}  \mathcal{F}_1    f_{B}^c(\sigma_{k_{I}})    f_{B}^c(\sigma_{k_{J}})\mathcal{F}_2 \mathcal{F}_3
+(-1)^{n-(u-1)} \mathcal{F}_1    f_{B}^c( \overline{\sigma_{k_{I}}})    f_{B}^c(\sigma_{k_{J}})\mathcal{F}_2 \mathcal{F}_3
\\=& (-1)^{n-u}   \mathcal{F}_1  f_{B}^c(\sigma_{k_{J}})  \mathcal{F}_2   {\rm tr}(f_{B}^c(\sigma_{k_{I}}) )   \mathcal{F}_3 +  (-1)^{n-(u-1)}  \mathcal{F}_1  {\rm tr}(  f_{B}^c(\sigma_{k_{I}})  )  f_{B}^c(\sigma_{k_{J}})\mathcal{F}_2 \mathcal{F}_3
\\=&   (-1)^{n-u} {\rm tr}(  f_{B}^c(\sigma_{k_{I}})  ) \big(\mathcal{F}_1  f_{B}^c(\sigma_{k_{J}})  \mathcal{F}_2\mathcal{F}_3 - \mathcal{F}_1f_{B}^c(\sigma_{k_{J}})\mathcal{F}_2 \mathcal{F}_3\big)
\\=& 0,
\end{aligned}
\end{equation*}
i.e.,  \eqref{sigma1234sumeqzeroeq} holds.
  \item[Case 1.1.2:]\label{ideteoctworeqthcase12}
 Let
  $ \operatorname{first} ({\rm supp} (\sigma _{k_I} )) = h  \neq i $ and
    $ \operatorname{first}  ({\rm supp} (\sigma _{k_J} ))   =j$, i.e.,
    \begin{equation*}
      i_{k_I}  = h \neq i  \quad \text{and} \quad  i_{k_J}=j.
    \end{equation*}
 Let $v_w \in\{1,...,v_I\}$ be such that $i_{ k_I + v_w } =i$. Then
    \begin{equation*}
     \sigma_l=  \sigma_{k_1}\ldots\sigma_{k_J}\ldots\sigma_{k_I}^{h} \ldots \sigma_{k_u},
    \end{equation*}
    where $\sigma_{k_J}$ is the same as in \eqref{case01skJsKIeq}, and
    \begin{align*}
\sigma_{k_I}^{h}=
 ( h,   i_{k_I +1},\ldots, i_{ k_I + v_w-1},  i , i_{ k_I + v_w+1}   \ldots,  i_{k_I+v_I}).
    \end{align*}
    Then
    \begin{equation}\label{Gdepeqz0case02f1}
       f_{B}(\sigma_l)=   \mathcal{F}_1  f_{B}^c(\sigma_{k_{J}})  \mathcal{F}_2   f_{B}^c(\sigma_{k_{I}^h})    \mathcal{F}_3,
    \end{equation}
    where $\mathcal{F}_1,\mathcal{F}_2,\mathcal{F}_3$ are given in \eqref{mathcalF123eq}.
Let
\begin{equation*}
  \sigma_{(2)}= \sigma_{k_1}\ldots\sigma_{k_J}\ldots \overline{\sigma_{k_I}^h} \ldots \sigma_{k_u},
\end{equation*}
where $\overline{\sigma_{k_{I}}^h} =  (h,  i_{k_I+v_I}, \ldots
 i_{ k_I + v_w+1} ,i ,i_{ k_I + v_w-1}, \ldots i_{k_I +1})$ satisfies
\begin{equation*}
  \overline{f_{B} ^c( \sigma_{k_{I}}^h ) } = {f_{B} ^c (\overline{\sigma_{k_{I}}^h})},
\end{equation*}
 as in \eqref{gongesigKieqfBeq}.
 It follows that $\sigma_{(2)}$ is some permutation in $S_n$  with the left-order cycle representation under ${\rm P}_l$.
  Then
\begin{equation}\label{Gdepeqz0case02f2}
  \begin{aligned}
   f_{B}(\sigma_{(2)})= &  \mathcal{F}_1     f_{B}^c({\sigma_{k_J}} )  \mathcal{F}_2   f_{B}^c( \overline{{\sigma_{k_I}^h}} )   \mathcal{F}_3=
\mathcal{F}_1  f_{B}^c(\sigma_{k_{J}})\mathcal{F}_2 \overline{f_{B}^c(\sigma_{k_{I}^h})}\mathcal{F}_3.
\end{aligned}
\end{equation}
Let
\begin{equation*}
  \sigma_{(3)}=   \sigma_{k_1}\ldots  \widetilde{\sigma_{k_J}}
 \ldots   \sigma_{k_{(I-1)}}    \sigma_{k_{(I+1)}} \ldots \sigma_{k_u},
\end{equation*}
where
\begin{equation*}
   \widetilde{\sigma_{k_J}}=  (j,  i_{k_{I}+v_{w}+1},..., i_{k_{I}+v_{I}},h , i_{k_{I}+1},...,i_{k_{I}+v_{w}-1},i,  i_{k_{J}+1},..., i_{k_{J}+v_{J}}) .
\end{equation*}
It follows that $\sigma_{(3)}$ is some permutation in $S_n$  with the left-order cycle representation under ${\rm P}_l$.
 Using \eqref{bjyeqbiyeqaiyeq}, we have
\begin{equation}\label{Gdepeqz0case02f3}
\begin{aligned}
  & f_{B}(\sigma_{(3)})=   \mathcal{F}_1     f_{B}^c(\widetilde{\sigma_{k_J}} )  \mathcal{F}_2 \mathcal{F}_3
   \\=& \mathcal{F}_1 (b _{j, i_{k_{I}+v_{w}+1}}~\ldots ~ b_{i_{k_{I}+v_{I}},h} b_{h , i_{k_{I}+1}}~\ldots ~
   b_{ i_{k_{I}+v_w-1},i}  b_{ i,  i_{k_{J}+1}},..., b_{ i_{k_{J}+v_J}, j} ) \mathcal{F}_2 \mathcal{F}_3
    \\=& \mathcal{F}_1 \big(b _{i, i_{k_{I}+v_{w}+1}}~\ldots ~ b_{i_{k_{I}+v_{I}},h} b_{h , i_{k_{I}+1}}~\ldots ~
   b_{ i_{k_{I}+v_w-1},i} \big) \big( b_{ j,  i_{k_{J}+1}},..., b_{ i_{k_{J}+v_J}, j} \big) \mathcal{F}_2 \mathcal{F}_3
  \\=&   \mathcal{F}_1 \mathcal{H}_1   f_{B}^c({\sigma_{k_J}} ) \mathcal{F}_2 \mathcal{F}_3,
\end{aligned}
\end{equation}
where $\mathcal{H}_1= b _{i, i_{k_{I}+v_{w}+1}}~\ldots ~ b_{i_{k_{I}+v_{I}},h} b_{h , i_{k_{I}+1}}~\ldots ~
   b_{ i_{k_{I}+v_w-1},i} $.
 Let
\begin{equation*}
  \sigma_{(4)}=  \sigma_{k_1}\ldots  \widehat{\sigma_{k_J}}
 \ldots   \sigma_{k_{(I-1)}}    \sigma_{k_{(I+1)}} \ldots \sigma_{k_u},
\end{equation*}
where
\begin{equation*}
  \widehat{\sigma_{k_J}}= (j,  i_{k_{I}+v_{w}-1},...,    i_{k_{I}+1}, h, i_{k_{I}+v_{I}},..., i_{k_{I}+v_{w}+1},
  i,  i_{k_{J}+1},..., i_{k_{J}+v_{J}}) .
\end{equation*}
It follows that $\sigma_{(4)}$ is some permutation in $S_n$  with the left-order cycle representation under ${\rm P}_l$.
Since,  by \eqref{bjyeqbiyeqaiyeq},
\begin{align*}
\overline{\mathcal{H}_1}&= \overline{b _{i, i_{k_{I}+v_{w}+1}}~\ldots ~ b_{i_{k_{I}+v_{I}},h} b_{h , i_{k_{I}+1}}~\ldots ~
   b_{ i_{k_{I}+v_w-1},i} }
    \\ &=\overline{   b_{ i_{k_{I}+v_w-1},i}}  ~ \ldots ~
    \overline{ b_{h , i_{k_{I}+1}}}   ~
    \overline{ b_{i_{k_{I}+v_{I}},h}}
     ~  \ldots ~
     \overline{ b _{i, i_{k_{I}+v_{w}+1}}}
   \\ &=b _{i,i_{k_{I}+v_{w}-1}}\ldots  b_{i_{k_{I}+1},h} b_{h,i_{k_{I}+v_I}}\ldots  b_{i_{k_{I}+v_w+1, i}},
\end{align*}
it follows that
\begin{equation}\label{Gdepeqz0case02f4}
\begin{aligned}
 &    f_{B}(\sigma_{(4)})= \mathcal{F}_1     f_{B}^c(\widehat{\sigma_{k_J}} )  \mathcal{F}_2 \mathcal{F}_3
     \\=&
   \mathcal{F}_1 \big(b _{j,i_{k_{I}+v_{w}-1}}\ldots  b_{i_{k_{I}+1},h} b_{h,i_{k_{I}+v_I}}\ldots  b_{i_{k_{I}+v_w+1, i}} b_{i, i_{k_J +1 }} ...b_{ i_{k_J +v_J }, j} \big) \mathcal{F}_2 \mathcal{F}_3
   \\=&
   \mathcal{F}_1 \big(b _{i,i_{k_{I}+v_{w}-1}}\ldots  b_{i_{k_{I}+1},h} b_{h,i_{k_{I}+v_I}}\ldots  b_{i_{k_{I}+v_w+1, i}} \big)\big(b_{j, i_{k_J +1 }} ...b_{ i_{k_J +v_J }, j} \big) \mathcal{F}_2 \mathcal{F}_3
      \\=&   \mathcal{F}_1\overline{\mathcal{H}_1 } f_{B}^c({\sigma_{k_J}} )  \mathcal{F}_2 \mathcal{F}_3.
\end{aligned}
\end{equation}
Using \eqref{quatraceproeq}, we have
\begin{equation}\label{trH1eqtrfbhseq}
\begin{aligned}
  {\rm tr}(\mathcal{H}_1)
  &={\rm tr}\big(  ( b _{i, i_{k_{I}+v_{w}+1}}~\ldots ~ b_{i_{k_{I}+v_{I}},h})( b_{h , i_{k_{I}+1}}~\ldots ~
   b_{ i_{k_{I}+v_w-1},i} )\big)
  \\&   = {\rm tr}\big(  ( b _{i_{k_I+v_{w}}, i_{k_{I}+v_{w}+1}}~\ldots ~ b_{i_{k_{I}+v_{I}},h})( b_{h , i_{k_{I}+1}}~\ldots ~
   b_{ i_{k_{I}+v_w-1},i_{k_I+v_{w}}} )\big)
\\&   =  {\rm tr}\big(  ( b_{h , i_{k_{I}+1}}~\ldots ~
   b_{ i_{k_{I}+v_w-1},i_{k_I+v_{w}}} )( b _{i_{k_I+v_{w}}, i_{k_{I}+v_{w}+1}}~\ldots ~ b_{i_{k_{I}+v_{I}},h})\big)
 \\&   =  {\rm tr}\left(f_{B}(\sigma_{k_{I}^h}) \right).
\end{aligned}
\end{equation}
Combining \eqref{Gdepeqz0case02f1}, \eqref{Gdepeqz0case02f2}, \eqref{Gdepeqz0case02f3}, \eqref{Gdepeqz0case02f4}, and \eqref{trH1eqtrfbhseq}, we obtain
\begin{equation*}
\begin{aligned}
  &{\rm sgn}(\sigma)  f_{B}(\sigma_l)   +
   {\rm sgn}(\sigma_{(2)})    f_{B}(\sigma_{(2)})
    +{\rm sgn}(\sigma_{(3)})  f_{B}(\sigma_{(3)})
     +{\rm sgn}(\sigma_{(4)}) f_{B}(\sigma_{(4)})
  \\=    & (-1)^{n-u}   \mathcal{F}_1  f_{B}^c(\sigma_{k_{J}})  \mathcal{F}_2   f_{B}^c(\sigma_{k_{I}^h})    \mathcal{F}_3 +
(-1)^{n-u}  \mathcal{F}_1  f_{B}^c(\sigma_{k_{J}})\mathcal{F}_2 \overline{f_{B}^c(\sigma_{k_{I}^h})}\mathcal{F}_3
\\& +(-1)^{n-(u-1)}  \mathcal{F}_1   \mathcal{H}_1 f_{B}^c(\sigma_{k_{J}})\mathcal{F}_2 \mathcal{F}_3
+(-1)^{n-(u-1)} \mathcal{F}_1   \overline{ \mathcal{H}_1 }    f_{B}^c(\sigma_{k_{J}})\mathcal{F}_2 \mathcal{F}_3
  \\=   & (-1)^{n-u}  {\rm tr}\big(f_{B}^c(\sigma_{k_{I}^h}) \big) \mathcal{F}_1  f_{B}^c(\sigma_{k_{J}})  \mathcal{F}_2   \mathcal{F}_3  +
    (-1)^{n-(u-1)}  {\rm tr}\big( \mathcal{H}_1  \big)
    \mathcal{F}_1  f_{B}^c(\sigma_{k_{J}})  \mathcal{F}_2   \mathcal{F}_3
      \\=   &(-1)^{n-u} \left( {\rm tr}\big(f_{B}^c(\sigma_{k_{I}^h}) ) \mathcal{F}_1  f_{B}^c(\sigma_{k_{J}})  \mathcal{F}_2   \mathcal{F}_3  -{\rm tr}( \mathcal{H}_1  )
    \mathcal{F}_1  f_{B}^c(\sigma_{k_{J}})  \mathcal{F}_2   \mathcal{F}_3  \right)    \\=   & 0,
\end{aligned}
\end{equation*}
i.e.,  \eqref{sigma1234sumeqzeroeq} holds.
 \item[Case 1.1.3:]\label{ideteoctworeqthcase13} Let
  $ \operatorname{first} ({\rm supp} (\sigma _{k_I} ))=i $ and
    $ \operatorname{first}  ({\rm supp} (\sigma _{k_J} ))  = h  \neq j$,  i.e.,
    \begin{equation*}
      i_{k_I}  =i   \quad \text{and} \quad i_{k_J}= h \neq j.
    \end{equation*}
As in Case 1.1.2, there exist
   $\sigma_{(2)},   \sigma_{(3)},  \sigma_{(4)} \in S_n$    with left-order cycle representations
   under ${\rm P}_l$
   such that \eqref{sigma1234sumeqzeroeq} holds.
\end{description}

    \item[Case 1.2:] Let $\sigma _{k_I} = \sigma _{k_J}$, i.e., $k_I = {k_J}$.
        Then $i,j\in{\rm supp}(\sigma_{k_J})$, and by assumption,
$\operatorname{first}({\rm supp}(\sigma_{k_J}))=j$, i.e., $i_{k_J}=j$.
        Let $v_w \in \{1,..., v_{J}\}$ satisfy  $i_{ k_J + v_w } =i $. Then
\begin{align*}
    \sigma_l&= \sigma_{k_1}\ldots\sigma_{k_J}\ldots  \sigma_{k_u}.
\end{align*}
where
\begin{align*}
\sigma_{k_J}
 =   (j,i_{k_J+1},\ldots, i_{ k_J + v_w-1 },i, i_{ k_J + v_w+1 }, \ldots, i_{k_J+v_J}) .
\end{align*}
Note that
\begin{align*}
f_{B}^c(\sigma_{k_J}) &= b_{j , i_{k_J+1}} ... b_{ i_{ k_J + v_w-1 },i } b_{ i, i_{ k_J + v_w+1 } } ...  b_{ i_{k_J+v_J}, j} \\
& = \big(b_{i , i_{k_J+1}} ... b_{ i_{ k_J + v_w-1 },i } \big)\big(b_{ j, i_{ k_J + v_w+1 } } ...  b_{ i_{k_J+v_J}, j}\big) = \mathcal{H}_2 \mathcal{H}_3,
\end{align*}
where
$
   \mathcal{H}_2 = b_{i , i_{k_J+1}} ... b_{ i_{ k_J + v_w-1 },i } $ and
$
  \mathcal{H}_3 = b_{ j, i_{ k_J + v_w+1 } } ...  b_{ i_{k_J+v_J}, j}.
$
\begin{description}
  \item[Case 1.2.1] Assume that the selector $\operatorname{first} $ satisfies: if \(i\in Q_1 \) and \(j\in Q_2 \) for \(Q_1,Q_2\in\mathcal{Q}([n])\) with \(Q_1\neq Q_2\), then $ \operatorname{first} (Q_1)  = i $ and $\operatorname{first} (Q_2)  =j$. In this case, regard $\sigma_l$ as $\sigma_{(3)}$ in Case 1.1.1; similarly, there exist three other permutations in $S_n$ such that \eqref{sigma1234sumeqzeroeq} holds.

  \item[Case 1.2.2] Assume that the selector $\operatorname{first} $ satisfies: if \(i\in Q_1 \) and \(j\in Q_2 \) for \(Q_1,Q_2\in\mathcal{Q}([n])\) with \(Q_1\neq Q_2\), then $ \operatorname{first} (Q_1)  \neq i $ and $\operatorname{first} (Q_2)  =j$.
              In this case, regard $\sigma_l$ as $\sigma_{(3)}$ in Case 1.1.2; similarly, there exist three other permutations in $S_n$ such that \eqref{sigma1234sumeqzeroeq} holds.

  \item[Case 1.2.3] Assume that the selector $\operatorname{first} $ satisfies: if \(i\in Q_1 \) and \(j\in Q_2 \) for \(Q_1,Q_2\in\mathcal{Q}([n])\) with \(Q_1\neq Q_2\), then $ \operatorname{first} (Q_1)  = i $ and $\operatorname{first} (Q_2)  \neq j$.
              In this case, regard $\sigma_l$ as $\sigma_{(3)}$ in Case 1.1.3; similarly, there exist three other permutations in $S_n$ such that \eqref{sigma1234sumeqzeroeq} holds.
\end{description}
\end{description}
Let
\begin{equation*}
  \mathcal{S}_1 = \{ \sigma \in S_n~ | ~\sigma _{k_I} \neq  \sigma _{k_J} \} \text{  and }\mathcal{S}_2 = \{ \sigma \in S_n ~|~ \sigma _{k_I} =   \sigma _{k_J} \}.
\end{equation*}
Then $   \mathcal{S}_1   \cup \mathcal{S}_2 =S_n $ and
$ \mathcal{S}_1   \cap \mathcal{S}_2  = \emptyset  $.
Let a map
\begin{equation*}
  \Upsilon:~ \mathcal{S}_1 \rightarrow \mathcal{S}_2  \quad \text{by}  \quad  \sigma \mapsto  \Upsilon(\sigma)=  \sigma_{(3)},
\end{equation*}
where $ \sigma_{(3)}$ is given in Case 1.1.
From the discussion in Case 1.1,
the definition of $\Upsilon$ is well defined.
From the discussion in  Case 1.2,
$\Upsilon$ is surjective.
Thus, $\Upsilon$ is bijective since both $\mathcal{S}_1$ and $\mathcal{S}_2$ have $\frac{n!}{2}$ elements, where $n!$ denotes the factorial of $n$.
Note that
\begin{equation*}
  \sigma_{(2)} \in \mathcal{S}_1
 \quad \text{and} \quad
  \Upsilon(\sigma_{(2)}) =\sigma_{(4)}  \in \mathcal{S}_2,
\end{equation*}
where  $ \sigma_{(2)}$  and  $ \sigma_{(4)}$ are given in Case 1.1.
Using \eqref{sigma1234sumeqzeroeq}, we obtain
\begin{align*}
    \sum\limits_{\sigma\in {\mathcal{S}}_{1}} {\rm sgn}(\sigma)  f_B(\sigma_l) & =
 -   \sum\limits_{\sigma\in {\mathcal{S}}_{1}} {\rm sgn}(\Upsilon(\sigma))  f_B(\Upsilon(\sigma))
=
  -   \sum\limits_{\sigma\in {\mathcal{S}}_{1}} {\rm sgn}( \sigma_{(3)})  f_B( \sigma_{(3)})\\
&
  =-   \sum\limits_{\sigma \in {\mathcal{S}}_{2}} {\rm sgn}(\sigma )  f_B(\sigma_l).
\end{align*}
Hence,
\begin{align*}
 {\rm GDet}_{{\rm P}_l}\left( B \right) =
 \sum\limits_{\sigma\in S_n} {\rm sgn}(\sigma)  f_B(\sigma_l)    =
  \sum\limits_{\sigma\in {\mathcal{S}}_{1}} {\rm sgn}(\sigma)  f_B(\sigma_l)
  +  \sum\limits_{\sigma\in {\mathcal{S}}_{2}} {\rm sgn}(\sigma)  f_B(\sigma_l)  =0.
\end{align*}

\item\label{ideteoctworeqthitemII}  Let ${\rm P}$ be a right-order cycle rule ${\rm P}_r(S_n,\operatorname{last},\succ_{\triangleright})$.
  The proof of
      \begin{equation*}
    {\rm GDet}_{{\rm P}_r}\left(B\right)
  =0
\end{equation*}
is similar to that of Item \eqref{ideteoctworeqthitemI}, and hence is omitted.
\end{enumerate}

Therefore, \eqref{GdetAjcaiceqzeroeq} follows from Items
\eqref{ideteoctworeqthitemI} and \eqref{ideteoctworeqthitemII}.

  \item\label{ideteoctworeqthAjaiBitem2}
 We  next prove
\begin{equation*}
    {\rm GDet}_{{\rm P}}\left(A_{\cdot j}(a_{\cdot i}) \right)=0.
\end{equation*}
  Indeed,  using Theorem \ref{genGDetlGdetRlinkth} and  \eqref{GdetAjcaiceqzeroeq},  it follows that
  \begin{enumerate}[(I)]
    \item if ${\rm P}$ is a left-order cycle rule ${\rm P}_l(S_n,\operatorname{first},\prec_{\triangleleft})$, then
        \begin{equation*}
          {\rm GDet}_{{\rm P}_l}\left(A_{\cdot j}(a_{\cdot i})\right) =
          \big({\rm GDet}_{{\rm P}_{r'}}((A_{\cdot j}(a_{\cdot i}))^*)\big)^*
          = \big({\rm GDet}_{{\rm P}_{r'}}(A_{ j\cdot }(a_{ i\cdot}))\big)^*=0;
        \end{equation*}

    \item if ${\rm P}$ is a right-order cycle rule ${\rm P}_r(S_n,\operatorname{last},\succ_{\triangleright})$, then
            \begin{equation*}
          {\rm GDet}_{{\rm P}_r}\left(A_{\cdot j}(a_{\cdot i})\right) =
          \big({\rm GDet}_{{\rm P}_{l'}}((A_{\cdot j}(a_{\cdot i}))^*)\big)^*
          = \big({\rm GDet}_{{\rm P}_{l'}}(A_{ j\cdot }(a_{ i\cdot}))\big)^*=0.
        \end{equation*}
  \end{enumerate}

\end{enumerate}
By Items \eqref {ideteoctworeqthAjaiBitem1} and
\eqref{ideteoctworeqthAjaiBitem2}, we obtain that
\eqref{GDetPAjairowcolmaineq} holds.
\end{proof}

\begin{theorem}\label{beilianghleqth}
Under the hypotheses of Theorem \ref{ideteoctworeqth}, let $b \in \mathbb{H}$.  Then,
\begin{equation}\label{beilianghleqthmaineq}
  {\rm GDet}_{{\rm P}}\left(A_{j \cdot}(b\cdot a_{i \cdot})\right)=  {\rm GDet}_{{\rm P}}\left(A_{\cdot j}(a_{\cdot i} \cdot b)\right)
  =
0.
\end{equation}
\end{theorem}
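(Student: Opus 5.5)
The plan is to repeat the cancellation argument of Theorem~\ref{ideteoctworeqth} for the modified matrix, and to get the column statement from the row statement by conjugate transposition. First two reductions. For the column part, let $A=A^*$ and note $(a_{\cdot i}b)^*=\overline{b}\,a_{i\cdot}$. Hence $\big(A_{\cdot j}(a_{\cdot i}b)\big)^*=A_{j\cdot}(\overline{b}\,a_{i\cdot})$. By Theorem~\ref{genGDetlGdetRlinkth} and \eqref{gonetroPrltraneq},
\[
{\rm GDet}_{{\rm P}_l}\big(A_{\cdot j}(a_{\cdot i}b)\big)=\Big({\rm GDet}_{{\rm P}_{r'}}\big(A_{j\cdot}(\overline{b}\,a_{i\cdot})\big)\Big)^*,
\]
and similarly for ${\rm P}_r$. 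The associated rule ${\rm P}_{r'}$ (resp.\ ${\rm P}_{l'}$) uses the same selector, so it still satisfies the hypotheses of Theorem~\ref{ideteoctworeqth}. It therefore suffices to prove ${\rm GDet}_{\rm P}(A_{j\cdot}(b\,a_{i\cdot}))=0$ for every $b\in\mathbb{H}$ and every admissible left- or right-order rule. The right-order case is symmetric, so I describe only the left-order case ${\rm P}_l$.

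Put $B=A_{j\cdot}(b\,a_{i\cdot})=[b_{xy}]$. Then $b_{jy}=b\,a_{iy}$, while every entry outside row $j$ is the corresponding entry of $A$. I would reuse verbatim the partition $S_n=\mathcal{S}_1\sqcup\mathcal{S}_2$, the bijection $\Upsilon$, and the quadruples $\sigma,\sigma_{(2)},\sigma_{(3)},\sigma_{(4)}$ from the proof of Theorem~\ref{ideteoctworeqth}. These are purely combinatorial and depend only on the rule, not on the entries. The goal is to re-establish \eqref{sigma1234sumeqzeroeq} for this $B$. The structural observation is that each monomial $f_B(\cdot)$ contains exactly one entry from row $j$, namely $b_{j,\sigma(j)}=b\,a_{i,\sigma(j)}$. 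Consequently each monomial of ${\rm GDet}(B)$ is obtained from the corresponding monomial for $A_{j\cdot}(a_{i\cdot})$ by inserting the single quaternion $b$ immediately to the left of the row-$j$ factor.

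The key step is to check, in Cases 1.1.1--1.1.3 (and hence in Case 1.2, which is obtained by reading the same quadruples from the $\mathcal{S}_2$ side), that this insertion respects the regrouping used there. In Case 1.1.1 the $J$-cycle starts at $j$, so $f^c_B(\sigma_{k_J})=b\,a_{i,i_{k_J+1}}\cdots$. In $\sigma_{(3)}$ and $\sigma_{(4)}$ the merged cycle $\widetilde{\sigma_{k_J}}$, $\widehat{\sigma_{k_J}}$ also starts at $j$, with $b$ in front. Thus all four terms have the shape $\mathcal{F}_1\, b\,(\cdots)$, and the regrouping gives
\[
\mathcal{F}_1\, b\,X\,\mathcal{F}_2\big(f^c_B(\sigma_{k_I})+\overline{f^c_B(\sigma_{k_I})}\big)\mathcal{F}_3
-\mathcal{F}_1\, b\,{\rm tr}\big(f^c_B(\sigma_{k_I})\big)\,X\,\mathcal{F}_2\mathcal{F}_3,
\]
where $X=a_{i,i_{k_J+1}}\cdots a_{i_{k_J+v_J},j}$. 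The $I$-cycle avoids index $j$ entirely, so its entries satisfy the Hermitian relation and \eqref{gongesigKieqfBeq} still holds. The trace is real and commutes past $X$ and $\mathcal{F}_2$, so the sum vanishes. Case 1.1.2 is analogous, using $\mathcal{H}_1$ and \eqref{trH1eqtrfbhseq}; here $\mathcal{H}_1$ involves only rows $\neq j$, so it is unaffected.

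The main obstacle is Case 1.1.3 (and its Case 1.2.3 counterpart), where $j$ is not the selected element of its cycle. There the row-$j$ factor, and hence $b$, sits in the middle of a cycle word rather than at its left end. I would redo that case explicitly, checking that in all four permutations the $i$-loop (or its reverse) is spliced in at the position of the row-$j$ entry. If so, $b$ and the prefix preceding it are common left factors and only the loop or its conjugate varies, so the same trace cancellation goes through. If the original "as in Case 1.1.2" construction does not place the splice there, I would fall back to an alternative: write $b=\beta_0+\beta_1\mathbf{i}+\beta_2\mathbf{j}+\beta_3\mathbf{k}$ and use row additivity (Theorem~\ref{basicproPgdecTh}\eqref{basicproPgdecThitem3}) together with the fact that real scalars factor out. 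This reduces the claim to $b\in\{\mathbf{i},\mathbf{j},\mathbf{k}\}$, where one can track the placement of the single unit factor by hand.
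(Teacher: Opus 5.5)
\paragraph{The gap.} The failure is not confined to Case 1.1.3. It already occurs in the Case 1.1.1--1.1.2 computation you treat as settled, and the paper's one-line proof (``similar to Theorem~\ref{ideteoctworeqth}'') has the same defect.

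In Case 1.1.1 you take the merged cycles $\widetilde{\sigma_{k_J}},\widehat{\sigma_{k_J}}$ to be written from $j$. This is the ``without loss of generality $\operatorname{first}(Q)=j$'' from the proof of Theorem~\ref{ideteoctworeqth}. But the hypothesis only gives $\operatorname{first}(Q)\in\{i,j\}$, and $i,j$ are not interchangeable here: row $j$ carries $b$, row $i$ does not. Suppose the merged cycle is written from $i$. Its word is then $X\,b\,\mathcal{H}$ rather than $b\,\mathcal{H}\,X$, where $X=a_{i,i_{k_J+1}}\cdots a_{i_{k_J+v_J},j}$ and $\mathcal{H}$ is the loop at $i$. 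It is also placed according to $i$ rather than $j$. After the trace step you are left with terms in $bX$ and in $Xb$, which do not cancel.

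\paragraph{A counterexample with $b\neq 1$.} The rule ${\rm rdet}_1$ with $i=1$, $j=2$ satisfies both hypotheses of Theorem~\ref{ideteoctworeqth}, since it selects $1$ in every set containing $1$. Take the Hermitian matrix $A=\begin{bmatrix}1&\mathbf{i}\\-\mathbf{i}&1\end{bmatrix}$ and $b=\mathbf{j}$. Then
\[
{\rm rdet}_1\big(A_{2\cdot}(\mathbf{j}\,a_{1\cdot})\big)=a_{11}(\mathbf{j}a_{12})-a_{12}(\mathbf{j}a_{11})=\mathbf{j}\mathbf{i}-\mathbf{i}\mathbf{j}=-2\mathbf{k}\neq 0,
\]
and likewise ${\rm rdet}_1\big(A_{\cdot 2}(a_{\cdot 1}\mathbf{j})\big)=-2\mathbf{k}$. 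So the statement as posed is false. This also refutes the identity ${\rm rdet}_i(A_{j\cdot}(b\,a_{i\cdot}))=0$ claimed in Remark~\ref{Pljlirlricedrdeteqmark}.

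\paragraph{Case 1.1.3 cannot be repaired.} Your proposed fix there is impossible. The merged cycle contains both $i$ and $j$, so by the first hypothesis it must be written from $i$ or $j$, never from $h$. Take $b=1$, let $X_1$ be the path from $h$ to $j$ and $Y$ the path from $j$ back to $h$. The merged word is then a cyclic rotation $YX_1$ of the unmerged word $X_1Y$, and only traces are rotation invariant.

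This fails even for $b=1$. Take ${\rm rdet}_1$, $i=1$, $j=3$ and $A=\begin{bmatrix}1&\mathbf{i}&0\\-\mathbf{i}&1&\mathbf{j}\\0&-\mathbf{j}&1\end{bmatrix}$. The terms of $(1)(2\,3)$ and $(1\,2\,3)$ leave $a_{11}(a_{12}a_{23}-a_{23}a_{12})$, and all other terms cancel. Hence ${\rm rdet}_1(A_{3\cdot}(a_{1\cdot}))=2\mathbf{k}$. So Theorem~\ref{ideteoctworeqth} itself fails, and your fallback of splitting $b$ into real components cannot help.

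\paragraph{What your argument does prove.} It proves the statement under the stronger hypothesis that the selector takes the value $j$ on every set containing $j$, i.e.\ $\operatorname{first}(Q)=j$ (resp.\ $\operatorname{last}(Q)=j$) whenever $j\in Q$. Then:
\begin{itemize}
\item Case 1.1.3 never occurs, and neither do merged cycles starting at $i$.
\item Your $b$-insertion computation in Cases 1.1.1--1.1.2 is correct.
\item Your conjugate-transpose reduction preserves the hypothesis, because $\mathrm{P}_{r'}$ uses the same selector.
\end{itemize}
This covers ${\rm rdet}_j$ and ${\rm cdet}_j$, but not ${\rm rdet}_i$ or ${\rm cdet}_i$.
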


\begin{proof}
\begin{enumerate}[(1)]
  \item
If $b=0$, then  \eqref{beilianghleqthmaineq} holds by Item \eqref{basicproPgdecThitem1} in Theorem \ref{basicproPgdecTh}.

  \item
If $b\neq 0$. Let
\begin{equation*}
  A=[a_{xy}],~B=A_{j\cdot}( b \cdot  a_{i\cdot})=[b_{xy}],~C=A_{\cdot j}(a_{\cdot i } \cdot  b )=[C_{xy}],
\end{equation*}
for all $x,y\in[n]$.
Note that since $A=A^*$, we have
\[
b_{xy}=
\begin{cases}
b \cdot a_{iy} = b \cdot b_{iy}, & x=j,\ y \in [n],\\
a_{xy}=\overline{a_{yx}}=\overline{b_{yx}}, & x\ne j,\ y\ne j,\\
a_{xy}, & x\ne j,\ y=j,
\end{cases}
\quad
c_{xy}=
\begin{cases}
a_{xi} \cdot  b  =c_{xi} \cdot  b , & x \in [n], \ y=j, \\
a_{xy}=\overline{a_{yx}}=\overline{c_{yx}}, & x\ne j, \ y\ne j, \\
a_{xy},   &  x=j,\   y\ne j.
\end{cases}
\]
Thus, the proof of \eqref{beilianghleqthmaineq} is similar to that of
\eqref{GDetPAjairowcolmaineq} in Theorem \ref{ideteoctworeqth}, and hence is omitted.
\end{enumerate}
\end{proof}

\begin{remark}\label{Pljlirlricedrdeteqmark}
 Theorem \ref{ideteoctworeqth} is the special case of Theorem \ref{beilianghleqth} with \(b=1\). The rules  \({\rm P}_{l\text{-}j}\), \({\rm P}_{l\text{-}i}\), \({\rm P}_{r\text{-}j}\), and \({\rm P}_{r\text{-}i}\) satisfy the conditions on the rule \({\rm P}\) in Theorem \ref{ideteoctworeqth}.
 Let \(A=A^*\in\mathbb{H}^{n\times n}\), \(i,j\in[n]\) with \(i\neq j\), and $b\in \mathbb{H}$.
 Using Theorem \ref{beilianghleqth}, we obtain
\begin{align*}
  &{\rm GDet}_{{\rm P}_{l\text{-}j}}\left(A_{j\cdot}(b\cdot a_{i\cdot})\right)
   ={\rm GDet}_{{\rm P}_{l\text{-}j}}\left(A_{\cdot j}(a_{\cdot i}\cdot b)\right)
   ={\rm GDet}_{{\rm P}_{l\text{-}i}}\left(A_{j\cdot}(b\cdot a_{i\cdot})\right)
   ={\rm GDet}_{{\rm P}_{l\text{-}i}}\left(A_{\cdot j}(a_{\cdot i}\cdot b)\right)\\
  =&{\rm GDet}_{{\rm P}_{r\text{-}j}}\left(A_{j\cdot}(b\cdot a_{i\cdot})\right)
   ={\rm GDet}_{{\rm P}_{r\text{-}j}}\left(A_{\cdot j}(a_{\cdot i}\cdot b)\right)
   ={\rm GDet}_{{\rm P}_{r\text{-}i}}\left(A_{j\cdot}(b\cdot a_{i\cdot})\right)
   ={\rm GDet}_{{\rm P}_{r\text{-}i}}\left(A_{\cdot j}(a_{\cdot i}\cdot b)\right)
   =0.
\end{align*}
Then, by Items \eqref{PGdetspecaseremarkitem4} and \eqref{PGdetspecaseremarkitem5}
in Remark \ref{PGdetspecaseremark}, we obtain \cite[Theorems 4.3, 4.4]{kyrchei2008cramer},
i.e.,
\begin{align*}
 {\rm rdet}_j\left(A_{j\cdot}(b\cdot a_{i\cdot})\right)
 &={\rm rdet}_j\left(A_{\cdot j}(a_{\cdot i}\cdot b)\right)
 ={\rm cdet}_j\left(A_{j\cdot}(b\cdot a_{i\cdot})\right)
 ={\rm cdet}_j\left(A_{\cdot j}(a_{\cdot i}\cdot b)\right)
 =0,
\end{align*}
and some new results supplement the relevant properties of row-column
determinants, i.e.,
\begin{equation*}
{\rm rdet}_i\left(A_{j\cdot}(b\cdot a_{i\cdot})\right)
={\rm rdet}_i\left(A_{\cdot j}(a_{\cdot i}\cdot b)\right)
={\rm cdet}_i\left(A_{j\cdot}(b\cdot a_{i\cdot})\right)
={\rm cdet}_i\left(A_{\cdot j}(a_{\cdot i}\cdot b)\right)=0.
\end{equation*}
\end{remark}

The following theorem presents the related results for the ${\rm P}$-generalized determinant of a quaternion Hermitian matrix under one-sided elementary row (column)  operations of addition  and scaling.

\begin{theorem}\label{danbianXBHth}
Let \(A=A^*\in\mathbb{H}^{n\times n}\),  \(i,j\in[n]\) with \(i\neq j\), and \(b\in\mathbb{H}\).
\begin{enumerate}[(1)]
  \item\label{danbianXBHthXitem} Let the rule \(\rm P\) be given by Theorem \ref{ideteoctworeqth}. Then
\begin{equation*}
  {\rm GDet}_{\rm P}(P_{ij}(b)A)={\rm GDet}_{\rm P}(AP_{ij}(b))={\rm Det}(A).
\end{equation*}

  \item\label{danbianXBHthBitem} Let the rule \(\rm P\) satisfy the following:
      \begin{itemize}
        \item If \({\rm P}={\rm P}_l(S_n,\operatorname{first},\prec_{\triangleleft})\), then \(\operatorname{first}(Q)=i\) for every \(Q\in\mathcal{Q}([n])\) with \(i\in Q\);
        \item If \({\rm P}={\rm P}_r(S_n,\operatorname{last},\succ_{\triangleright})\), then \(\operatorname{last}(Q)=i\) for every \(Q\in\mathcal{Q}([n])\) with \(i\in Q\).
      \end{itemize}
   Then
  \begin{equation*}
    {\rm GDet}_{\rm P}\left(A_{i\cdot}(b\cdot a_{i\cdot})\right)={\rm GDet}_{\rm P}\left(A_{\cdot i}(a_{\cdot i}\cdot b)\right)=b\cdot{\rm Det}(A)={\rm Det}(A)\cdot b.
  \end{equation*}

\end{enumerate}
\end{theorem}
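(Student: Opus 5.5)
For part (1), I would use additivity of ${\rm GDet}_{\rm P}$ in a single row or column (Theorem \ref{basicproPgdecTh}, Items \eqref{basicproPgdecThitem3} and (4)), together with the vanishing result of Theorem \ref{beilianghleqth}. The matrix $P_{ij}(b)A$ differs from $A$ only in its $i$-th row, which becomes $a_{i\cdot}+b\,a_{j\cdot}$. Additivity in row $i$ gives
\[
{\rm GDet}_{\rm P}(P_{ij}(b)A)={\rm GDet}_{\rm P}(A)+{\rm GDet}_{\rm P}\bigl(A_{i\cdot}(b\cdot a_{j\cdot})\bigr).
\]
The first term equals ${\rm Det}(A)$ by Theorem \ref{HerPGdetmainproth} and Remark \ref{GPdeteqrcdeteqchendetremark}, since $A$ is Hermitian. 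The second term is exactly the situation of Theorem \ref{beilianghleqth} with the roles of $i$ and $j$ interchanged. The hypotheses of Theorem \ref{ideteoctworeqth} are symmetric in $i$ and $j$: the first condition asks that ${\rm first}(Q)\in\{i,j\}$, and the second is an ``or'' statement about the two cycles. So Theorem \ref{beilianghleqth} applies to the pair $(j,i)$ and the second term vanishes. The column case $AP_{ij}(b)$ is handled the same way. There the column that changes is $a_{\cdot j}+a_{\cdot i}b$; I would check carefully which column index is modified under the paper's convention for $P_{ij}(b)$. Additivity in that column, followed by the column half of Theorem \ref{beilianghleqth} (again using the $i\leftrightarrow j$ symmetry), finishes part (1).

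For part (2), I treat the left rule first. Take ${\rm P}={\rm P}_l$ with ${\rm first}(Q)=i$ whenever $i\in Q$. Then in every $\sigma_l$, the cycle containing $i$ is written starting at $i$, so its factor $f^c$ begins with an entry $a_{i,\cdot}$. The problem is that this cycle need not be the leftmost one in $\sigma_l$, because $\prec_{\triangleleft}$ is arbitrary. So $f_B(\sigma_l)=\mathcal{F}_1\,(b\,a_{i,\cdot}\cdots)\,\mathcal{F}_2$, and $b$ cannot be pulled out to the left directly. I would resolve this by comparing with a ${\rm P}_{l\text{-}i}$ rule, where the $i$-cycle is always placed first. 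Under ${\rm P}_{l\text{-}i}$, Theorem \ref{basicproPgdecTh}(2) gives
\[
{\rm GDet}_{{\rm P}_{l\text{-}i}}\bigl(A_{i\cdot}(b\cdot a_{i\cdot})\bigr)=b\,{\rm Det}(A).
\]
The remaining task, and the main obstacle, is to show that moving the $i$-cycle does not change the value for $B=A_{i\cdot}(b\,a_{i\cdot})$, even though $B$ is not Hermitian.

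To handle that obstacle, I would use the involution technique from the proof of Theorem \ref{HerPGdetmainproth}. Every cycle $C$ not containing $i$ involves only entries of $B$ that coincide with entries of $A$, so these satisfy $b_{xy}=\overline{b_{yx}}$. Group the permutations into the classes $T_\sigma$ obtained by reversing the cycles of length at least $3$ that avoid $i$. Summing over such a class, each of those factors is replaced by its real trace via Lemma \ref{kyrchei2008cramerLemma31}, and cycles of length at most $2$ avoiding $i$ already give real factors. Within each class, then, only the $i$-cycle factor is non-real. All other factors are real and commute with it, so the class sum equals the same product with the $i$-cycle moved to the front. This class-wise sum is identical for ${\rm P}_l$ and for ${\rm P}_{l\text{-}i}$, provided the $i$-cycle is written starting at $i$ in both rules. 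That proviso holds here by the hypothesis ${\rm first}(Q)=i$ for $Q\ni i$. Summing over all classes gives
\[
{\rm GDet}_{{\rm P}_l}(B)=b\,{\rm Det}(A).
\]
Finally, $b\,{\rm Det}(A)={\rm Det}(A)\,b$ because ${\rm Det}(A)\in\mathbb{R}$.

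For the right rule ${\rm P}_r$ and for the column operation $A_{\cdot i}(a_{\cdot i}b)$, I would argue the same way. The $i$-cycle is now read ending at column $i$ and can be moved to the far right, giving ${\rm Det}(A)\,b$. Alternatively, I would transfer via Theorem \ref{genGDetlGdetRlinkth}, using $\bigl(A_{\cdot i}(a_{\cdot i}b)\bigr)^*=A_{i\cdot}(\overline{b}\,a_{i\cdot})$ and conjugating the result. The mixed cases, a left rule with the column operation or a right rule with the row operation, are the delicate ones. In those, the scaled entries are $a_{xi}b$, which in a left-order reading sit at the end of the $i$-cycle factor. The same class-wise trace argument moves that factor to the far right, giving ${\rm Det}(A)\,b=b\,{\rm Det}(A)$.
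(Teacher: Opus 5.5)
\textbf{The gap is in part (1).} It sits at the step where you invoke Theorem~\ref{beilianghleqth} for the pair $(j,i)$ to conclude ${\rm GDet}_{\rm P}(A_{i\cdot}(b\cdot a_{j\cdot}))=0$. Your own observation exposes the problem: the hypotheses of Theorem~\ref{ideteoctworeqth} are symmetric in $i,j$, but for non-real $b$ the conclusion of Theorem~\ref{beilianghleqth} is not. For a given rule it can hold in one orientation and fail in the other.

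Here is a counterexample. Take $n=2$; every rule on $S_2$ satisfies those hypotheses for either pair. Let ${\rm P}={\rm P}_{l\text{-}1}$, so ${\rm GDet}_{\rm P}(M)={\rm rdet}_1(M)=m_{11}m_{22}-m_{12}m_{21}$. Let $A=\begin{pmatrix}1&\mathbf{i}\\ -\mathbf{i}&2\end{pmatrix}$, with ${\rm Det}(A)=1$, and take $(i,j)=(2,1)$ and $b=\mathbf{j}$. Then
\[
{\rm rdet}_1\bigl(A_{2\cdot}(\mathbf{j}\cdot a_{1\cdot})\bigr)={\rm rdet}_1\begin{pmatrix}1&\mathbf{i}\\ \mathbf{j}&-\mathbf{k}\end{pmatrix}=-\mathbf{k}-\mathbf{i}\mathbf{j}=-2\mathbf{k},\qquad {\rm rdet}_1\bigl(P_{21}(\mathbf{j})A\bigr)=1-2\mathbf{k}\neq{\rm Det}(A).
\]
Likewise ${\rm rdet}_1(AP_{12}(\mathbf{j}))=1-2\mathbf{k}$, so the column half fails as well.

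In general, for $A=\begin{pmatrix}a&c\\ \bar c&d\end{pmatrix}$ one gets ${\rm rdet}_1(P_{21}(b)A)={\rm Det}(A)+a(bc-cb)$. The $b$-scaled row-$2$ entries $bc$ and $ba$ come after a row-$1$ entry in the monomials $a\,(bc)$ and $c\,(ba)$. So $b$ cannot be pulled to the front, and the two monomials cancel only when $b$ commutes with $c$.

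The paper's proof of Item (1) makes the same move, and Theorem~\ref{beilianghleqth} is only asserted to follow ``similarly''. So Item (1) as stated is false for non-real $b$. Your argument does go through in two cases:
\begin{itemize}
\item When $b\in\mathbb{R}$: the extra term is $b\,{\rm GDet}_{\rm P}(A_{i\cdot}(a_{j\cdot}))=0$ by Theorem~\ref{ideteoctworeqth}.
\item When the rule is anchored at the modified index: ${\rm first}(Q)=i$ (resp.\ ${\rm last}(Q)=i$) whenever $i\in Q$ for $P_{ij}(b)A$, and anchored at $j$ for $AP_{ij}(b)$. Then your part-(2) class-sum argument pulls $b$ out to the left (resp.\ right), reducing the extra term to Theorem~\ref{ideteoctworeqth} with $b=1$.
\end{itemize}

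\textbf{Part (2) is fine} and is essentially the paper's argument. The paper also partitions $S_n$ into the reversal classes $T_\sigma$ from the proof of Theorem~\ref{HerPGdetmainproth}. It shows that each class sum for $A_{i\cdot}(b\cdot a_{i\cdot})$ is $b$ times the class sum for $A$. It reverses the $i$-cycle as well; the reversed cycle still starts at $i$, so that factor becomes $b\,{\rm tr}(\cdot)$.

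Your comparison with ${\rm P}_{l\text{-}i}$ is a harmless detour, since your class sum is already $b\cdot(\text{class sum for }A)$. It also tacitly needs ${\rm tr}(q_1q_2)={\rm tr}(q_2q_1)$ to match non-$i$ cycles that the two rules start at different points.

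One correction: with the $i$-cycle anchored at $i$, its factor begins with an entry of row $i$ and ends with an entry of column $i$, under left and right rules alike. So in the ${\rm P}_r$/row case the scaled entries are $b\,a_{iy}$ at the front, not $a_{xi}b$. None of the four combinations is more delicate than the others.
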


\begin{proof}
Let $A=[a_{xy}]$ for all $x,y \in [n]$.
\begin{enumerate}[(1)]
  \item\label{danbianXBHthproofitem1}
  Let $B= P_{ij}(b)A = [b_{xy}]$ for all $x,y \in [n]$, where
\begin{equation*}
  b_{iy} = a_{iy} + b a_{jy}
\end{equation*}
for all $y\in [n]$. Using Item \eqref{basicproPgdecThitem3} in Theorem \ref{basicproPgdecTh} and Theorem \ref{beilianghleqth}, we obtain
\begin{equation*}
    {\rm GDet}_{\rm P}(B)= {\rm GDet}_{\rm P}(A) + {\rm GDet}_{\rm P}\big(A_{i \cdot}(b \cdot a_{j \cdot} )\big) = {\rm GDet}_{\rm P}(A)={\rm Det}(A).
\end{equation*}
Similarly, we obtain ${\rm GDet}_{\rm P}(AP_{ij}(b))={\rm Det}(A)$.

  \item\label{danbianXBHthproofitem2}
 Let \({\rm P}={\rm P}_l(S_n,\operatorname{first},\prec_{\triangleleft})\), where  the selector $\operatorname{first}$ satisfies \(\operatorname{first}(Q)=i\) for every \(Q\in\mathcal{Q}([n])\) with \(i\in Q\).
 Let $B= A_{i\cdot}(b\cdot a_{i\cdot})=[b_{xy}]$ for all $x,y \in [n]$, where
\begin{equation}\label{bzyazybiybaiyeq}
  b_{zy}=a_{zy}  \quad \text{and} \quad  b_{iy} = b\cdot a_{iy}
\end{equation}
for all $z\in [n]\setminus \{i\}$ and $y \in [n]$.
Let the left-order cycle decomposition of a permutation $\sigma \in S_n$ under the rule ${\rm P}_l$ be given by \eqref{Plsigamlcfeneq}, and let $ \sigma _{k_{I}} \in \{ \sigma_{k_1},\sigma_{k_2},\ldots,\sigma_{k_u}\}$ satisfy $i \in \operatorname{supp} (\sigma _{k_I})$. Then, by \eqref{bzyazybiybaiyeq}, we have
\begin{align*}
f_{A}(\sigma_l)&= f_{A}^c (\sigma _{k_1} )\cdots f_{A}^c (\sigma _{k_I} )\cdots f_{A}^c (\sigma _{k_u} ),\\
f_{B}(\sigma_l)&= f_{B}^c (\sigma _{k_1} )\cdots f_{B}^c (\sigma _{k_I} )\cdots f_{B}^c (\sigma _{k_u} )=  f_{A}^c (\sigma _{k_1} )\cdots b \cdot f_{A}^c (\sigma _{k_I} )\cdots f_{A}^c (\sigma _{k_u} ).
\end{align*}
Let $T_{\sigma}=\{\sigma_l\}$ if $v_d\leq1$ for all $d=1,\ldots,u$, and
let $T_{\sigma}$ be given in \eqref{Tsigmaset2peq} if some $v_d\geq2$.
Then,  we claim that
     \begin{equation}\label{danbianXBHthproofitemIIImaineqsult}
          \sum_{\tau_l\in T_{\sigma}}{\rm sgn}(\tau)f_B(\tau_l)
=b    \sum_{\tau_l\in T_{\sigma}}{\rm sgn}(\tau)f_A(\tau_l).
       \end{equation}
\begin{enumerate}[(I)]
  \item\label{danbianXBHthproofitemI} Suppose $v_d\leq 1$ for all $d=1,2,\ldots,u$. Then, using \eqref{fAsigkdinReq}, we have $f_A^c(\sigma_{k_d}) \in \mathbb{R}$ for all $d=1,\ldots,u$. Therefore
  \begin{equation*}
f_{B}(\sigma_l) =   f_{A}^c (\sigma _{k_1} )\cdots b \cdot f_{A}^c (\sigma _{k_I} )\cdots f_{A}^c (\sigma _{k_u} ) = b \cdot f_{A}^c (\sigma _{k_1} )\cdots  f_{A}^c (\sigma _{k_I} )\cdots f_{A}^c (\sigma _{k_u} )
=   b \cdot f_{A}(\sigma_l),
  \end{equation*}
i.e., \eqref{danbianXBHthproofitemIIImaineqsult} holds.
  \item\label{danbianXBHthproofitemII} Assume $v_{w_1},\ldots,v_{w_p}\geq2$, where $p\geq1$ and ${w_1},\ldots,{w_p}\in\{1,\ldots,u\}$ satisfy
$
     i_{k_{w_1}}\triangleleft\ldots\triangleleft i_{k_{w_p}}
$.
From  Item \eqref{firstitemii} in the proof of Theorem \ref{HerPGdetmainproth}, it follows that
       \begin{equation*}
          \sum_{\tau_l\in T_{\sigma}}{\rm sgn}(\tau)f_A(\tau_l)
=(-1)^{n-u}\alpha\,{\rm tr}(f_A^c(\tau_{k_{w_1}}))\ldots{\rm tr}(f_A^c(\tau_{k_{w_p}}))\in\mathbb{R},
       \end{equation*}
where $\alpha$ is given in \eqref{alpha12channeq}.
\begin{enumerate}[(i)]
  \item\label{danbianXBHthproofitemi} If the index $I$ satisfies $ I \notin \{{w_1},\ldots,{w_p}\}$, then
  \begin{equation*}
    \sum_{\tau_l\in T_{\sigma}}{\rm sgn}(\tau)f_B(\tau_l)
=(-1)^{n-u}\, b\,\alpha\,{\rm tr}(f_A^c(\tau_{k_{w_1}}))\ldots{\rm tr}(f_A^c(\tau_{k_{w_p}}))=
b \sum_{\tau_l\in T_{\sigma}}{\rm sgn}(\tau)f_A(\tau_l).
\end{equation*}

\item\label{danbianXBHthproofitemii} If the index $I$ satisfies $ I \in \{{w_1},\ldots,{w_p}\}$, then
using \eqref{bzyazybiybaiyeq} and  \eqref{Fcongongesigameq}, we have
\begin{equation*}
f_B^c(  \overline{\sigma_{k_{I}}})={b} \cdot   f_A^c( \overline{\sigma_{k_{I}}}) =  {b} \cdot \overline{f_A^c(\sigma_{k_{I}})},
\end{equation*}
  where $\overline{\sigma_{k_{I}}}=(i_{k_{I}},i_{k_{I}+v_{I}},\ldots,i_{k_{I}+1})$.
Thus, similar to \eqref{sigmaRnueq}, we obtain
     \begin{equation*}
          \sum_{\tau_l\in T_{\sigma}}{\rm sgn}(\tau)f_B(\tau_l)
=b\, \left((-1)^{n-u}\alpha\,{\rm tr}(f_A^c(\tau_{k_{w_1}}))\ldots{\rm tr}(f_A^c(\tau_{k_{w_p}}))\right) =b    \sum_{\tau_l\in T_{\sigma}}{\rm sgn}(\tau)f_A(\tau_l).
       \end{equation*}

\end{enumerate}
\end{enumerate}

Hence,  using \eqref{danbianXBHthproofitemIIImaineqsult} and \eqref{GDetPlAsumdaegeq}, we obtain
 \begin{equation*}
    {\rm GDet}_{{\rm P}_l}\left(A_{i\cdot}(b\cdot a_{i\cdot})\right)=    {\rm GDet}_{{\rm P}_l}\left(B\right) = b\cdot {\rm GDet}_{{\rm P}_l}\left(A\right) = b\cdot{\rm Det}(A),
 \end{equation*}
 which, together with  Theorem \ref{HerPGdetmainproth}, yields
\begin{equation}\label{GdetplAibaieqbdeatAdetbeq}
 {\rm GDet}_{{\rm P}_l}\left(A_{i\cdot}(b\cdot a_{i\cdot})\right)= b  \cdot {\rm Det}(A) ={\rm Det}(A)\cdot b .
\end{equation}
  If \({\rm P}={\rm P}_r(S_n,\operatorname{last},\succ_{\triangleright})\) and the selector $\operatorname{last}$ satisfies  \(\operatorname{last}(Q)=i\) for every \(Q\in\mathcal{Q}([n])\) with \(i\in Q\), similar to the proof of \eqref{GdetplAibaieqbdeatAdetbeq}, we can derive
 \begin{equation}\label{GdetplAibaieqbdeatAdetbringteq}
 {\rm GDet}_{{\rm P}_r}\left(A_{i\cdot}(b\cdot a_{i\cdot})\right)=  b  \cdot {\rm Det}(A) = {\rm Det}(A)\cdot b.
\end{equation}
Therefore,  combining  \eqref{GdetplAibaieqbdeatAdetbeq} and \eqref{GdetplAibaieqbdeatAdetbringteq},  it follows
\begin{equation*}
  {\rm GDet}_{\rm P}\left(A_{ i\cdot}(b \cdot  a_{ i\cdot} )\right)=b\cdot{\rm Det}(A)={\rm Det}(A) \cdot b.
\end{equation*}
Furthermore,   using Theorem \ref{genGDetlGdetRlinkth}, we obtain
\begin{equation*}
  {\rm GDet}_{\rm P}\left(A_{\cdot i}(a_{\cdot i}\cdot b)\right)=b\cdot{\rm Det}(A)={\rm Det}(A) \cdot b.
\end{equation*}
This finishes  the proof.
\end{enumerate}
\end{proof}

\begin{remark}
The rule ${\rm P}$ given in Item \eqref{danbianXBHthBitem} of Theorem
\ref{danbianXBHth} satisfies the restriction on the rule ${\rm P}$ in
Theorem \ref{ideteoctworeqth}.
Since ${\rm P}_{l\text{-}i}$ and ${\rm P}_{r\text{-}i}$ are special cases
of the rule ${\rm P}$ given in Item \eqref{danbianXBHthBitem} of Theorem
\ref{danbianXBHth}, by Theorem \ref{danbianXBHth} we obtain
\begin{align*}
    &  {\rm GDet}_{{\rm P}_{l\text{-}i}}\left(A_{i\cdot}(b\cdot a_{i \cdot})\right)={\rm GDet}_{{\rm P}_{l\text{-}i}}\left( A_{\cdot i}(a_{\cdot i} \cdot b) \right)={\rm Det}(A) \cdot  b
    \\ = &{\rm GDet}_{{\rm P}_{r\text{-}i}}\left(A_{i\cdot}(b\cdot a_{i \cdot})\right)
 = {\rm GDet}_{{\rm P}_{r\text{-}i}}\left( A_{\cdot i}(a_{\cdot i} \cdot b) \right)
=b \cdot {\rm Det}(A),
\end{align*}
and further derive
\cite[Lemmas 5.1, 5.2]{kyrchei2012theory},  i.e.,
\begin{align*}
&{\rm rdet}_{i} \left(A_{i\cdot}(b\cdot a_{i\cdot})\right)  =
  {\rm rdet}_{i}\left(A_{\cdot i}(a_{\cdot i}\cdot b)\right)={\rm det}(A)\cdot b
    \\ = & {\rm cdet}_{i}\left(A_{i\cdot}(b\cdot a_{i\cdot})\right)
    = {\rm cdet}_{i}\left(A_{\cdot i}(a_{\cdot i}\cdot b)\right)
  = b\cdot{\rm det}(A)
  .
\end{align*}
Theorem \ref{danbianXBHth}  discusses the related conclusions for the elementary row (column) operations of addition and scaling on quaternion Hermitian matrices.
However, the situation is different for the third operation,
interchanging.
In general, interchanging two rows or columns of a Hermitian matrix does not
simply reverse the sign of the determinant.
 For example,
let
\[
A= \begin{bmatrix}
1 & \mathbf{i} & \mathbf{j}\\
-\mathbf{i} & 1 & \mathbf{k}\\
-\mathbf{j} & -\mathbf{k} & 1
\end{bmatrix}
\quad\text{and}\quad
B=\begin{bmatrix}
1 & \mathbf{i} & \mathbf{j}\\
-\mathbf{j} & -\mathbf{k} & 1\\
-\mathbf{i} & 1 & \mathbf{k}
\end{bmatrix}.
\]
 Then, $A\in\mathbb{H}^{3\times 3}$ is Hermitian,
 and $B\in\mathbb{H}^{3\times 3}$ is  obtained by interchanging  the second and third rows of $A$.
Since row-column determinants are representative in the ${\rm P}$-generalized determinant setting,  direct computation gives
\begin{align*}
 &  {\rm Det}(A) =   {\rm rdet}_1(A)= {\rm rdet}_2(A)= {\rm rdet}_3(A)
  =  {\rm cdet}_1(A)= {\rm cdet}_2(A)= {\rm cdet}_3(A)=-4, \\
  &   {\rm rdet}_1(B)= 0, ~{\rm rdet}_2(B)=   {\rm rdet}_3(B)= 2, ~    {\rm cdet}_1(B)=0,~   {\rm cdet}_2(B)=   {\rm cdet}_3(B)=2.
\end{align*}
Therefor, after interchanging two rows or columns of a Hermitian quaternion  matrix,  the determinant may change either in sign or in value.
\end{remark}

The following corollary presents the properties of the ${{\rm P}_{l\text{-}j}}$- and ${\rm P}_{r\text{-}j}$-generalized determinants when a row (column) of a quaternion Hermitian matrix is a left (right) linear combination of the other rows (columns).

\begin{corollary}\label{lineaproHerPgdetth}
Let $A = A^* \in \mathbb{H}^{n\times n}$, and $b_k\in \mathbb{H}$ for $k=1,2,\ldots,n$. Then,
\begin{enumerate}[(1)]
  \item
 $
  {\rm GDet}_{{\rm P}_{l\text{-}j}}\left(A_{j\cdot}\left(\sum\limits_{k=1,k\neq j}^{n} b_k\cdot a_{k\cdot}\right)\right)=  {\rm GDet}_{{\rm P}_{l\text{-}j}}\left(A_{\cdot j}\left(\sum\limits_{k=1,k\neq j}^{n} a_{\cdot k} \cdot b_k \right)\right)=0$;

  \item
$
{\rm GDet}_{{\rm P}_{l\text{-}j}}\left(A_{j\cdot}\left(\sum\limits_{k=1}^{n} b_k\cdot a_{k\cdot}\right)\right)=  {\rm GDet}_{{\rm P}_{l\text{-}j}}\left(A_{\cdot j}\left(\sum\limits_{k=1}^{n} a_{\cdot k} \cdot b_k \right)\right)
  =b_j\cdot {\rm Det}(A)
$;

  \item
 $
  {\rm GDet}_{{\rm P}_{r\text{-}j}}\left(A_{j\cdot}\left(\sum\limits_{k=1,k\neq j}^{n} b_k\cdot a_{k\cdot}\right)\right)=  {\rm GDet}_{{\rm P}_{r\text{-}j}}\left(A_{\cdot j}\left(\sum\limits_{k=1,k\neq j}^{n} a_{\cdot k} \cdot b_k \right)\right)=0$;

  \item
$
{\rm GDet}_{{\rm P}_{r\text{-}j}}\left(A_{j\cdot}\left(\sum\limits_{k=1}^{n} b_k\cdot a_{k\cdot}\right)\right)=  {\rm GDet}_{{\rm P}_{r\text{-}j}}\left(A_{\cdot j}\left(\sum\limits_{k=1}^{n} a_{\cdot k} \cdot b_k \right)\right)
  =b_j\cdot {\rm Det}(A).
$

\end{enumerate}
\end{corollary}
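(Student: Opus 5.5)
The plan is to reduce everything to the additivity in Theorem~\ref{basicproPgdecTh} together with the vanishing and scaling results in Theorems~\ref{beilianghleqth} and \ref{danbianXBHth}. Throughout, the rules ${\rm P}_{l\text{-}j}$ and ${\rm P}_{r\text{-}j}$ fix the selected element $j$ for every cycle containing $j$. By Remark~\ref{Pljlirlricedrdeteqmark}, such rules satisfy the hypotheses of Theorem~\ref{ideteoctworeqth} for every pair $(k,j)$ with $k\neq j$, since any support containing both $k$ and $j$ selects $j$, and any support containing $j$ selects $j$. They also satisfy the hypothesis of Item~\eqref{danbianXBHthBitem} of Theorem~\ref{danbianXBHth} with $i$ replaced by $j$.

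For item (1), I first expand the replaced row by additivity. Write the new $j$-th row as $\sum_{k\neq j} b_k a_{k\cdot}$. Applying Item~\eqref{basicproPgdecThitem3} of Theorem~\ref{basicproPgdecTh} repeatedly (an induction on the number of summands) gives
\[
{\rm GDet}_{{\rm P}_{l\text{-}j}}\Bigl(A_{j\cdot}\Bigl(\sum_{k\neq j} b_k a_{k\cdot}\Bigr)\Bigr)=\sum_{k\neq j}{\rm GDet}_{{\rm P}_{l\text{-}j}}\bigl(A_{j\cdot}(b_k a_{k\cdot})\bigr).
\]
Each summand is zero by Theorem~\ref{beilianghleqth}, applied with the pair $(i,j)=(k,j)$. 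The column statement is handled identically: use additivity in the $j$-th column and the column half of Theorem~\ref{beilianghleqth}, ${\rm GDet}_{\rm P}(A_{\cdot j}(a_{\cdot k}b_k))=0$. Item (3) is the same argument with ${\rm P}_{r\text{-}j}$ in place of ${\rm P}_{l\text{-}j}$.

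For items (2) and (4), I split off the $k=j$ term via additivity:
\[
{\rm GDet}_{\rm P}\Bigl(A_{j\cdot}\Bigl(\sum_k b_k a_{k\cdot}\Bigr)\Bigr)={\rm GDet}_{\rm P}\bigl(A_{j\cdot}(b_j a_{j\cdot})\bigr)+{\rm GDet}_{\rm P}\Bigl(A_{j\cdot}\Bigl(\sum_{k\neq j} b_k a_{k\cdot}\Bigr)\Bigr).
\]
The second term vanishes by item (1) (respectively item (3)). The first term equals $b_j\,{\rm Det}(A)$ by Item~\eqref{danbianXBHthBitem} of Theorem~\ref{danbianXBHth}. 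In the column case that item gives ${\rm Det}(A)\,b_j$ instead, and this equals $b_j\,{\rm Det}(A)$ because ${\rm Det}(A)\in\mathbb{R}$ by Theorem~\ref{HerPGdetmainproth}.

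The one step needing care is that Theorems~\ref{beilianghleqth} and \ref{danbianXBHth} assume the matrix being modified is Hermitian. Applying additivity to the modified matrix $A_{j\cdot}(\cdot)$ is not an issue, because each term produced is again of the form $A_{j\cdot}(b_k a_{k\cdot})$ with the original Hermitian $A$. So every term falls exactly under a previously proved statement, and no non-Hermitian matrix has to be handled directly. I expect the main point to verify to be that the ${\rm P}_{l\text{-}j}$ and ${\rm P}_{r\text{-}j}$ rules meet the selector conditions of Theorem~\ref{ideteoctworeqth} for every $k\neq j$ at once. As noted above, this holds because $j$ is always selected whenever it lies in a support.
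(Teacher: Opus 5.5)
Your proposal is correct and is the paper's own argument: the paper proves this corollary in one line by citing Theorems~\ref{basicproPgdecTh}, \ref{beilianghleqth} and \ref{danbianXBHth}, and your additivity expansion, your check that $\mathrm{P}_{l\text{-}j}$ and $\mathrm{P}_{r\text{-}j}$ meet the selector hypotheses, and your splitting off of the $k=j$ term spell that line out. One caution, which applies equally to the paper: for non-real $b$, Theorem~\ref{beilianghleqth} is false for some rules that satisfy the hypotheses of Theorem~\ref{ideteoctworeqth} (for $n=2$, $A=\begin{bmatrix}1&\mathbf{i}\\-\mathbf{i}&1\end{bmatrix}$ and $b=\mathbf{j}$ one gets $\mathrm{rdet}_1(A_{2\cdot}(b\cdot a_{1\cdot}))=-2\mathbf{k}$), but the instances you use, where the rule is anchored at the replaced index $j$, are sound: for $\mathrm{P}_{l\text{-}j}$ the row case follows from the scaling property in Theorem~\ref{basicproPgdecTh} applied to $A_{j\cdot}(a_{k\cdot})$ together with Theorem~\ref{ideteoctworeqth}, and in the column case, for each fixed cycle through $j$, the remaining cycles sum to the determinant of a Hermitian principal submatrix, which is real by Theorem~\ref{HerPGdetmainproth}, so $b_k$ can be moved to the far right (and the $\mathrm{P}_{r\text{-}j}$ cases work the same way with left and right exchanged).
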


\begin{proof}
These assertions follow directly from Theorems \ref{basicproPgdecTh}, \ref{beilianghleqth}, and \ref{danbianXBHth}.
\end{proof}

\begin{remark}
By Items \eqref{PGdetspecaseremarkitem4} and \eqref{PGdetspecaseremarkitem5} in Remark \ref{PGdetspecaseremark}, Corollary \ref{lineaproHerPgdetth} yields \cite[Theorems 5.7--5.10]{kyrchei2012theory}, i.e.,
\begin{enumerate}[(1)]
  \item ${\rm rdet}_j\left(A_{j\cdot}\left(\sum\limits_{k=1,k\neq j}^{n} b_k\cdot a_{k \cdot}\right)\right)={\rm rdet}_j\left(A_{\cdot j}\left(\sum\limits_{k=1,k\neq j}^{n} a_{\cdot k } \cdot  b_k \right)\right)=0$;

  \item ${\rm rdet}_j\left(A_{j\cdot}\left( a_{j\cdot} + \sum\limits_{k=1,k\neq j}^{n} b_k\cdot a_{k \cdot}\right)\right)=
      {\rm rdet}_j\left(A_{\cdot j}\left(
  a_{\cdot j} + \sum\limits_{k=1,k\neq j}^{n} a_{ \cdot k} \cdot  b_k \right)\right)= {\rm det}(A)$;

  \item ${\rm cdet}_j\left(A_{j\cdot}\left(\sum\limits_{k=1,k\neq j}^{n} b_k\cdot a_{k \cdot}\right)\right)={\rm cdet}_j\left(A_{\cdot j}\left(\sum\limits_{k=1,k\neq j}^{n} a_{\cdot k } \cdot  b_k \right)\right)=0$;

  \item ${\rm cdet}_j\left(A_{j\cdot}\left( a_{j\cdot} + \sum\limits_{k=1,k\neq j}^{n} b_k\cdot a_{k \cdot}\right)\right)=
      {\rm cdet}_j\left(A_{\cdot j}\left(
  a_{\cdot j} + \sum\limits_{k=1,k\neq j}^{n} a_{ \cdot k} \cdot  b_k \right)\right)= {\rm det}(A)$.
\end{enumerate}
\end{remark}

\begin{remark}\label{Shuangproremark}
Let $A = A^* \in \mathbb{H}^{n\times n}$ and $b\in \mathbb{H}$.
Using Theorem \ref{HerPGdetmainproth}, Remark \ref{GPdeteqrcdeteqchendetremark},  and some results for the Chen  and row-column determinants, we obtain the following properties:
\begin{enumerate}[(1)]
  \item
  $
  {\rm Det}\left( P_{ij}(b)A (P_{ij}(b))^* \right)= {\rm Det}(A)
  $ { (by  \cite[Theorem 7.1]{kyrchei2008cramer} or  \cite[Theorem 2]{chen1991inverse})};

  \item
  $
  {\rm Det}\left( P_{i}(b)A (P_{i}(b))^* \right)= \vert b \vert ^2 {\rm Det}(A)
  $
(by \cite[Theorem 1]{li1995properties});

  \item
  $
  {\rm Det}\left( P_{ij}A (P_{ij})^* \right)={\rm Det}(A)
  $
 (by \cite[Theorem 1.2]{chen1991definition});

\item\label{Shuangproremarkitem004} ${\rm Det}(A) \neq 0$ if and only if $A$ is invertible
 (by \cite[Theorem 7.3]{kyrchei2012theory});

\item\label{Shuangproremarkitem4}
  $
  {\rm Det}\left( UA U^* \right)= {\rm Det}(A),
  $
  where $U\in \mathbb{H}^{n\times n}$ is a unitary matrix
 (by  \cite[Theorem 3]{li1995properties});

  \item\label{Shuangproremarkitem5}
  $
  {\rm Det}(A)=\prod\limits_{i=1}^{n} \lambda_{i},
  $
  where $\lambda_1,...,\lambda_n$  are the $n$ real right eigenvalues of $A$  (by  \cite[Theorem 7.3]{kyrchei2008cramer});

  \item\label{Shuangproremarkitem6}
  $
  {\rm Det}(AB)={\rm Det}(A){\rm Det}(B)
  $, where $A,B, AB\in \mathbb{H}^{n \times n}$ are all Hermitian  (by  \cite[Corollary 6.1]{kyrchei2012theory}).

\end{enumerate}

\end{remark}

\begin{remark}
We now examine how the $\mathrm{P}$-generalized determinant of quaternion Hermitian matrices relates to Axioms~1--3  in Section~\ref{Introductionsection}.
\begin{enumerate}[(1)]
  \item By Item \eqref{Shuangproremarkitem004} in Remark \ref{Shuangproremark}, for any rule ${\rm P}$, the ${\rm P}$-generalized determinant on quaternion Hermitian matrices satisfies Axiom 1.
  \item By Item \eqref{danbianXBHthXitem} of Theorem \ref{danbianXBHth}, if the rule ${\rm P}$ is given by Theorem \ref{ideteoctworeqth}, then the ${\rm P}$-generalized determinant  on quaternion Hermitian matrices  satisfies Axioms 1 and 3.
  \item By Item \eqref{Shuangproremarkitem6} of Remark \ref{Shuangproremark}, if the rule ${\rm P}$ is given by Theorem \ref{ideteoctworeqth} and the product of Hermitian matrices is required to remain Hermitian, then the ${\rm P}$-generalized determinant  on quaternion Hermitian matrices  satisfies Axioms 1, 2, and 3.
\end{enumerate}
\end{remark}

\section{Applications of ${\rm P}$-generalized determinant}\label{appPgenrdetssection}
Building on the properties of the \({\rm P}\)-generalized determinant established in Sections~\ref{Pgenrdetsdefsection} and \ref{PgenrdetHersection}, we begin this section by giving the determinantal representation of the inverse of a quaternion Hermitian matrix.

\begin{theorem}\label{quaHerInmafenth}
Let $A = A^* \in \mathbb{H}^{n\times n}$ with ${\rm Det}(A) \neq 0$,  and let the rules $\operatorname{P}_{l\text{-}i}$ and $\operatorname{P}_{r\text{-}j}$ be given for  all $i,j\in[n]$.
For any $i\in[n]$ and each $j\in[n]_{i} :=[n]\setminus\{i\}$,
let $\operatorname{P}_{l\text{-}j}^{(i)}$ denote the left-order cycle representation rule  given by \eqref{cofatortheoremplgj}.
For any $j\in[n]$ and each $i\in [n]_{j} := [n]\setminus\{j\}$, let $\operatorname{P}_{r\text{-}i}^{(j)}$ denote the right-order cycle representation rule given by \eqref{cofatortheoremprgi}.
Let
\begin{equation*}
     R_A= \frac{1}{\operatorname{Det}(A)}\left[
            \begin{array}{cccc}
          R_{11}  & R_{21}  & \cdots  & R_{n1} \\
          R_{12} & R_{22}  & \cdots  & R_{n2}\\
       \vdots  & \vdots  &   \cdots  & \vdots\\
        R_{1n}  & R_{2n}  & \cdots  & R_{nn}
            \end{array}
          \right],\quad
        L_A   =\frac{1}{\operatorname{Det}(A)}
           \left[
            \begin{array}{cccc}
          L_{11}  & L_{21}  & \cdots  & L_{n1} \\
          L_{12} & L_{22}  & \cdots  & L_{n2}\\
       \vdots  & \vdots  &   \cdots  & \vdots\\
        L_{1n}  & L_{2n}  & \cdots  & L_{nn}
            \end{array}
          \right],
\end{equation*}
where
\begin{equation*}
 R_{ij}=\begin{cases}
   \operatorname{Det}(A^{i,i}), & j=i,\\[1mm]
  -\operatorname{GDet}_{{\operatorname{P}_{l\text{-}j}^{(i)}}}\left((A_{\cdot j}(a_{\cdot i}))^{i,i}\right), & j \neq i,
    \end{cases}\quad
    L_{ij}=\begin{cases}
   \operatorname{Det}(A^{j,j}), & i=j,\\[1mm]
   -\operatorname{GDet}_{\operatorname{P}_{r\text{-}i}^{(j)}}\left((A_{i\cdot}(a_{j\cdot}))^{j,j}\right), & i \neq j.
    \end{cases}
\end{equation*}
Then
\begin{equation*}
A^{-1}= R_A=L_A.
\end{equation*}
\end{theorem}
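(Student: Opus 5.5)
We show $AR_A=I_n$ and $L_AA=I_n$. Since $A$ is invertible by Item \eqref{Shuangproremarkitem004} of Remark \ref{Shuangproremark}, each one-sided identity forces $R_A=A^{-1}=L_A$. So the proof reduces to computing the entries of $AR_A$ and $L_AA$. The $(i,k)$ entry of $A\cdot(\operatorname{Det}(A)R_A)$ is $\sum_{j}a_{ij}R_{kj}$, a right-cofactor expansion along row $i$ built from the cofactors of row $k$.

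The first step is to rewrite each $R_{kj}$ as the cofactor appearing in Theorem \ref{cofatortheorem}\eqref{cofatortheoremitem1}. For $j\neq k$ this is literally the definition of $R_{kj}$, with the rule $\operatorname{P}^{(k)}_{l\text{-}j}$ from \eqref{cofatortheoremplgj} attached to $\operatorname{P}_{l\text{-}k}$. For $j=k$ we have $R_{kk}=\operatorname{Det}(A^{k,k})$. Since $A^{k,k}$ is Hermitian, Theorem \ref{HerPGdetmainproth} gives $\operatorname{Det}(A^{k,k})=\operatorname{GDet}_{\operatorname{P}_{l_1}}(A^{k,k})$ for any rule, so this entry agrees with the cofactor as well. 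Note that the cofactors $R_{kj}$ involve only entries of $A$ outside row $k$: column $j$ is overwritten by column $k$, and then row $k$ and column $k$ are deleted. Replacing row $k$ of $A$ by an arbitrary row vector $c$ therefore leaves every $R_{kj}$ unchanged. Theorem \ref{cofatortheorem}\eqref{cofatortheoremitem1} then yields
\[
\operatorname{GDet}_{\operatorname{P}_{l\text{-}k}}\bigl(A_{k\cdot}(c)\bigr)=\sum_j c_jR_{kj}.
\]

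Apply this with $c=a_{i\cdot}$. If $i=k$, the right-hand side is $\operatorname{GDet}_{\operatorname{P}_{l\text{-}k}}(A)=\operatorname{Det}(A)$ by Theorem \ref{HerPGdetmainproth}. If $i\neq k$, the right-hand side is $\operatorname{GDet}_{\operatorname{P}_{l\text{-}k}}(A_{k\cdot}(a_{i\cdot}))$, which vanishes by Theorem \ref{ideteoctworeqth}. That theorem applies because $\operatorname{P}_{l\text{-}k}$ selects $k$ in every subset containing $k$, and Remark \ref{Pljlirlricedrdeteqmark} confirms that such rules satisfy its hypotheses, with $k$ in the role of the replaced index. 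Hence $\sum_j a_{ij}R_{kj}=\delta_{ik}\operatorname{Det}(A)$. Because $\operatorname{Det}(A)$ is real and nonzero, it is central, so dividing by it gives $AR_A=I_n$.

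The left inverse is handled symmetrically. The $(k,i)$ entry of $(\operatorname{Det}(A)L_A)A$ is $\sum_j L_{jk}a_{ji}$. By Theorem \ref{cofatortheorem}\eqref{cofatortheoremitem2}, together with Theorem \ref{HerPGdetmainproth} for the diagonal cofactor, this equals $\operatorname{GDet}_{\operatorname{P}_{r\text{-}k}}(A_{\cdot k}(a_{\cdot i}))$, because the left cofactors of column $k$ do not depend on column $k$. This is $\operatorname{Det}(A)$ when $i=k$, and it is $0$ when $i\neq k$ by the column half of Theorem \ref{ideteoctworeqth}. Thus $L_AA=I_n$.

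The main obstacle is confirming the independence claims. We must check that the cofactors of Theorem \ref{cofatortheorem} really avoid row $k$ (respectively column $k$), and that the column-$j$ replacement $A_{\cdot j}(a_{\cdot k})$ is unaffected by changing row $k$. The latter holds because row $k$ is deleted afterwards. We must also verify that the Theorem \ref{ideteoctworeqth} hypotheses hold for $\operatorname{P}_{l\text{-}k}$ with the index pair $(i,k)$, where the first bullet selects $k$ and the second is met by $\operatorname{first}(Q_2)=k$. Everything else is bookkeeping with the index conventions for $R_A$ and $L_A$, which are transposed relative to $R_{ij}$ and $L_{ij}$.
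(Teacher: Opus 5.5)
Your proposal is correct and follows the paper's proof. Both arguments compute $AR_A$ entrywise, identify $\sum_j a_{ij}R_{kj}$ with $\operatorname{GDet}_{\operatorname{P}_{l\text{-}k}}(A_{k\cdot}(a_{i\cdot}))$ via Theorem \ref{cofatortheorem}, and conclude with Theorem \ref{HerPGdetmainproth} when $i=k$ and Theorem \ref{ideteoctworeqth} when $i\neq k$, handling $L_A$ symmetrically. Your explicit checks that the row-$k$ cofactors do not depend on row $k$, and that $\operatorname{Det}(A^{k,k})=\operatorname{GDet}_{\operatorname{P}_{l_1}}(A^{k,k})$, fill in steps the paper leaves implicit.
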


\begin{proof}
Let $A=[a_{xy}]$ for all $x,y\in[n]$. Using Theorems  \ref{cofatortheorem} and  \ref{HerPGdetmainproth}, we obtain
\begin{equation*}
 \sum_{k=1}^{n}a_{ik}R_{ik} = {\rm GDet}_{{\rm P}_{l\text{-}i}}(A) = {\rm Det}(A),
\end{equation*}
where $i\in[n]$.
Using Theorems  \ref{cofatortheorem} and \ref{ideteoctworeqth}, we get
\begin{equation*}
   \sum_{k=1}^{n}a_{ik}R_{jk} = {\rm GDet}_{{\rm P}_{l\text{-}j}}(A_{j\cdot}(a_{i\cdot})) = 0,
\end{equation*}
where $i,j\in[n]$ with $i\neq j$.
Therefore,
\begin{equation*}
  A R_A = \frac{1}{{\rm Det}(A)}
  \left[
    \begin{array}{ccc}
       \sum\limits_{k=1}^{n}a_{1k}R_{1k}    & \cdots &   \sum\limits_{k=1}^{n}a_{1k}R_{nk}  \\
    \vdots    & \vdots  &  \vdots \\
       \sum\limits_{k=1}^{n}a_{nk}R_{1k}    & \cdots &   \sum\limits_{k=1}^{n}a_{nk}R_{nk}  \\
    \end{array}
  \right]
  = \frac{1}{{\rm Det}(A)}
  \left[
    \begin{array}{ccc}
   {\rm Det}(A)  & \cdots &  0  \\
    \vdots    &  \ddots  &  \vdots \\
 0  & \cdots  &     {\rm Det}(A)  \\
    \end{array}
  \right]
  = I_n,
\end{equation*}
i.e., $A^{-1}=R_A$.
By a similar argument, it follows that \(A^{-1}=L_A\).
\end{proof}

Ivan \cite[Remark 7.1]{kyrchei2012theory} defines the classical adjoint matrix of an invertible quaternion Hermitian matrix via row-column determinants, denoted by \(\operatorname{Adj}[A]\).
Motivated by this, we now  introduce the ${\rm P}$-adjoint matrix  based on the \({\rm P}\)-generalized determinant.

\begin{remark}
 For a quaternion Hermitian matrix \(A\in\mathbb{H}^{n\times n}\) with \({\rm Det}(A)\neq 0\) and \(i,j\in[n]\) with \(i\neq j\),   using Theorem \ref{quaHerInmafenth}  we have that
\begin{equation*}
   R_{ij}:= -{\rm GDet}_{{\rm P}_{l\text{-}j}^{(i)}} \left((A_{\cdot j}(a_{\cdot i}))^{i,i}\right) \quad \text{and} \quad
     L_{ij}:= -{\rm GDet}_{{\rm P}_{r\text{-}i}^{(j)}}\left((A_{i\cdot}(a_{j\cdot}))^{j,j}\right)
\end{equation*}
 are equal and this value is independent of the rules ${\rm P}_{l\text{-}i}$ and ${\rm P}_{r\text{-}j}$.
 Hence, under the hypotheses of Theorem~\ref{quaHerInmafenth},
 we define the \(\mathrm{P}\)-adjoint matrix of \(A\), denoted by \({\rm Adj}_{{\rm P}}\), as follows:
\begin{equation}\label{Padjmatrixdefeq}
     {\rm Adj}_{{\rm P}}(A)= \left[
 \begin{array}{cccc}
          R_{11}  & R_{21}  & \ldots  & R_{n1} \\
          R_{12} & R_{22}  & \ldots  & R_{n2}\\
       \vdots  & \vdots  &   \ldots  & \vdots\\
        R_{1n}  & R_{2n}  & \ldots  & R_{nn}
            \end{array}
          \right]
    =
    \left[
            \begin{array}{cccc}
          L_{11}  & L_{21}  & \ldots  & L_{n1} \\
          L_{12} & L_{22}  & \ldots  & L_{n2}\\
       \vdots  & \vdots  &   \ldots  & \vdots\\
        L_{1n}  & L_{2n}  & \ldots  & L_{nn}
            \end{array}
          \right].
\end{equation}
 Furthermore, from  Theorem \ref{quaHerInmafenth},  \cite[Remark 7.1]{kyrchei2012theory},  and Remark \ref{GPdeteqrcdeteqchendetremark}, it follows that
\begin{equation}\label{Padjmatrixproeq}
     {\rm Adj}_{{\rm P}}(A)\cdot A=A\cdot{\rm Adj}_{{\rm P}}(A)={\rm Det}(A)\cdot I_n, \quad
     {\rm Det}({\rm Adj}_{\rm P}(A))=({\rm Det}(A))^{n-1}.
\end{equation}
In particular, if \({\rm P}_{l\text{-}j}\) and \({\rm P}_{r\text{-}i}\) satisfy \eqref{PGdetspecaseremarkitem4} and \eqref{PGdetspecaseremarkitem5} of Remark \ref{PGdetspecaseremark}, respectively, then the \({\rm P}\)-adjoint matrix of \(A\) coincides with its classical adjoint matrix \({\rm Adj}[A]\).
\end{remark}

Applying Theorem~\ref{quaHerInmafenth} on the determinantal representation of the inverse of an invertible quaternion Hermitian matrix, we derive Cramer's rule for solving matrix equations over  quaternions.

\begin{theorem}\label{GDETPmatreqappth}
Let \(A = A^* \in \mathbb{H}^{n\times n}\), \(B = B^* \in \mathbb{H}^{m\times m}\), and \(C \in \mathbb{H}^{n\times m}\), with \({\rm Det}(A) \neq 0\) and \({\rm Det}(B) \neq 0\).
 Let $c_k$ and $e_k$ denote the $k$-th column vectors of $C$ and $I_m$, respectively, for $k=1,\ldots,m$.
Consider the quaternion matrix equation
\begin{equation}\label{fenbiaoappmaeqeq}
  AXB=C,
\end{equation}
where $X \in \mathbb{H}^{n\times m}$ is unknown. Then the components of the unique solution $X=[x_{ij}]\in \mathbb{H}^{n\times m}$ of Equation \eqref{fenbiaoappmaeqeq} are given by
\begin{equation*}
   x_{ij}= \frac{\sum\limits_{k=1}^{m}  {\rm GDet}_{{\rm P}_{r\text{-}i}} \left(A_{\cdot i}(c_k)\right)  \cdot  {\rm GDet}_{{\rm P}_{l\text{-}j}}
    \left(B_{j \cdot}(e_{k}^T) \right)    } {{\rm Det}(A){\rm Det}(B)},
\end{equation*}
where $i=1,\ldots,n$ and $j=1,\ldots,m$.
\end{theorem}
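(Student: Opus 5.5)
Since ${\rm Det}(A)\neq 0$ and ${\rm Det}(B)\neq 0$, Item \eqref{Shuangproremarkitem004} of Remark \ref{Shuangproremark} shows that $A$ and $B$ are invertible. The unique solution of \eqref{fenbiaoappmaeqeq} is therefore $X=A^{-1}CB^{-1}$. The plan is to write $A^{-1}$ through the left cofactors $L_A$ and $B^{-1}$ through the right cofactors $R_B$ of Theorem \ref{quaHerInmafenth}. We then recognize the resulting cofactor sums as column and row expansions of ${\rm P}$-generalized determinants, via Theorem \ref{cofatortheorem}.

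\emph{Step 1.} Apply $A^{-1}=L_A$. The $(i,s)$ entry of $L_A$ is $L_{si}/{\rm Det}(A)$, where $L_{si}$ is the left ${\rm P}_{r\text{-}i}$-cofactor of $a_{si}$. Hence
\[
(A^{-1}C)_{ik}=\frac{1}{{\rm Det}(A)}\sum_{s=1}^{n}L_{si}c_{sk}.
\]
By Theorem \ref{cofatortheorem}\eqref{cofatortheoremitem2}, with column index $i$, the sum $\sum_s L_{si}c_{sk}$ equals ${\rm GDet}_{{\rm P}_{r\text{-}i}}(A_{\cdot i}(c_k))$. The reason is that the cofactors $L_{si}$ never involve column $i$: $A^{i,i}$ and $(A_{s\cdot}(a_{i\cdot}))^{i,i}$ both delete column $i$. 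They also do not involve the entries of row $i$ that replacing column $i$ would alter, so the cofactors of $A_{\cdot i}(c_k)$ along column $i$ coincide with those of $A$.

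\emph{Step 2.} Apply $B^{-1}=R_B$. The $(t,j)$ entry of $R_B$ is $R_{jt}/{\rm Det}(B)$, where $R_{jt}$ is the right ${\rm P}_{l\text{-}j}$-cofactor of $b_{jt}$. Then
\[
x_{ij}=\sum_{k,t}(A^{-1}C)_{ik}(B^{-1})_{kt}\delta_{tj},
\]
which is more conveniently written as
\[
x_{ij}=\sum_k (A^{-1}C)_{ik}(B^{-1})_{kj}=\sum_k (A^{-1}C)_{ik}\frac{R_{jk}}{{\rm Det}(B)}.
\]
Write $R_{jk}=\sum_t (e_k^T)_t R_{jt}$. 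By Theorem \ref{cofatortheorem}\eqref{cofatortheoremitem1} this equals ${\rm GDet}_{{\rm P}_{l\text{-}j}}(B_{j\cdot}(e_k^T))$, by the same argument as in Step 1: replacing row $j$ does not change the right cofactors along row $j$.

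\emph{Step 3.} Combine the two steps. Since ${\rm Det}(A)$ and ${\rm Det}(B)$ are real by Theorem \ref{HerPGdetmainproth}, they commute with everything and can be pulled out into a single denominator. This gives the stated formula.

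The main obstacle is justifying that the cofactors in Theorem \ref{cofatortheorem} are unaffected by replacing the expansion column or row. Concretely, the left cofactor $L_{si}$ of $A_{\cdot i}(c_k)$ is $-{\rm GDet}_{{\rm P}_{r\text{-}s}}$ of the matrix obtained by replacing row $s$ with row $i$ and then deleting row and column $i$. After column $i$ is deleted, row $i$ of $A_{\cdot i}(c_k)$ agrees with row $i$ of $A$, so the cofactor is unchanged. The same holds for the $s=i$ term, which is ${\rm Det}(A^{i,i})$. The row case for $B$ is symmetric.

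We must also check that the sub-rules ${\rm P}_{r\text{-}s}^{(i)}$ and ${\rm P}_{l\text{-}t}^{(j)}$ used in Theorem \ref{quaHerInmafenth} are exactly the ones produced by Theorem \ref{cofatortheorem}. They are, since both are defined through \eqref{cofatortheoremprgi} and \eqref{cofatortheoremplgj}.
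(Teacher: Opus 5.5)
Your proposal is correct and follows essentially the same route as the paper: you write $X=A^{-1}CB^{-1}$, express $A^{-1}$ through the left cofactors and $B^{-1}$ through the right cofactors of Theorem~\ref{quaHerInmafenth}, and recognize the resulting sums as column and row expansions via Theorem~\ref{cofatortheorem}. The paper leaves implicit why the cofactors along column $i$ (resp.\ row $j$) are unchanged when that column (row) is replaced; your explicit verification of this, including the diagonal term ${\rm Det}(A^{i,i})$, is correct.
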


\begin{proof}
Let $C=[c_{xy}]\in \mathbb{H}^{n\times m}$ and $I_{m}=[e_{xy}]\in \mathbb{H}^{m\times m}$.
By \eqref{Padjmatrixproeq} and \eqref{Padjmatrixdefeq}, it follows that
\begin{equation*}
  A^{-1}= \alpha  {\rm Adj}_{{\rm P}}(A)=  \alpha
   \left[
            \begin{array}{ccc}
          L_{11}  &  \ldots  & L_{n1} \\
       \vdots  &  \ldots & \vdots\\
        L_{1n}  & \ldots  & L_{nn}
            \end{array}
          \right], ~
B^{-1} = \beta  {\rm Adj}_{{\rm P}}(B)   = \beta
\left[
            \begin{array}{ccc}
          R_{11}  &  \ldots  & R_{m1} \\
       \vdots  &  \ldots & \vdots\\
        R_{1m}  & \ldots  & R_{mm}
            \end{array}
          \right],
\end{equation*}
where \(\alpha = \frac{1}{\operatorname{Det}(A)}\) and \(\beta = \frac{1}{\operatorname{Det}(B)}\).
Thus, using Theorem \ref{cofatortheorem}, we obtain
\begin{align*}
A^{-1} C&= \alpha\,
           \left[
            \begin{array}{ccc}
          L_{11}  &  \ldots  & L_{n1} \\
       \vdots  &  \ldots & \vdots\\
        L_{1n}  & \ldots  & L_{nn}
            \end{array}
          \right]
          \left[
            \begin{array}{ccc}
           c_1  &  \ldots &    c_m \\
            \end{array}
          \right]\\
 &=     \alpha\,   \left[
            \begin{array}{cccc}
        \sum\limits_{k=1}^{n} L_{k1}c_{k1}  &\ldots &    \sum\limits_{k=1}^{n} L_{k1}c_{km} \\
       \vdots  & \ldots  &   \vdots  \\
   \sum\limits_{k=1}^{n} L_{kn}c_{k1}   & \ldots& \sum\limits_{k=1}^{n} L_{kn}c_{km}
            \end{array}
          \right]
          = \alpha\,
                \left[
            \begin{array}{cccc}
       {\rm GDet}_{{\rm P}_{r\text{-}1}}(A_{\cdot 1}(c_{1})) &\ldots &    {\rm GDet}_{{\rm P}_{r\text{-}1}}(A_{\cdot 1}(c_{m})) \\
       \vdots  & \ldots  &   \vdots  \\
{\rm GDet}_{{\rm P}_{r\text{-}n}}(A_{\cdot n}(c_{1}))  & \ldots& {\rm GDet}_{{\rm P}_{r\text{-}n}}(A_{\cdot n}(c_{m}))
            \end{array}
          \right],
\end{align*}
\begin{align*}
B^{-1}& =I_{m}B^{-1} =
\beta\,  \left[
  \begin{array}{c}
    e_1^T  \\
    \vdots  \\
     e_m^T      \\
  \end{array}
\right]
\left[
            \begin{array}{ccc}
          R_{11}  &  \ldots  & R_{m1} \\
       \vdots  &  \ldots & \vdots\\
        R_{1m}  & \ldots  & R_{mm}
            \end{array}
          \right] \\
          & =
         \beta \,\left[
            \begin{array}{cccc}
     \sum\limits_{k=1}^{m}e_{1k} R_{1k}  &\ldots &    \sum\limits_{k=1}^{m} e_{1k} R_{mk}  \\
       \vdots  & \ldots  &   \vdots  \\
     \sum\limits_{k=1}^{m} e_{mk} R_{1k}   & \ldots&     \sum\limits_{k=1}^{m} e_{mk}R_{mk}
            \end{array}
          \right]
          =  \beta\,
    \left[
            \begin{array}{cccc}
       {\rm GDet}_{{\rm P}_{l\text{-}1}}(B_{ 1 \cdot}(e_{1}^T)) &\ldots &    {\rm GDet}_{{\rm P}_{l\text{-}m}}(B_{ m \cdot}(e_{1}^T)) \\
       \vdots  & \ldots  &   \vdots  \\
{\rm GDet}_{{\rm P}_{l\text{-}1}}(B_{ 1 \cdot}(e_{m}^T))  & \ldots& {\rm GDet}_{{\rm P}_{l\text{-}m}}(B_{ m \cdot}(e_{m}^T))
            \end{array}
          \right].
\end{align*}
Therefore,  the unique solution of Equation \eqref{fenbiaoappmaeqeq} is
\begin{align*}
 X& = A^{-1} CB^{-1}\\
& ={\footnotesize
\gamma\, \left[
            \begin{array}{cccc}
       {\rm GDet}_{{\rm P}_{r\text{-}1}}(A_{\cdot 1}(c_{1})) &\ldots &    {\rm GDet}_{{\rm P}_{r\text{-}1}}(A_{\cdot 1}(c_{m})) \\
       \vdots  & \ldots  &   \vdots  \\
{\rm GDet}_{{\rm P}_{r\text{-}n}}(A_{\cdot n}(c_{1}))  & \ldots& {\rm GDet}_{{\rm P}_{r\text{-}n}}(A_{\cdot n}(c_{m}))
            \end{array}
          \right]
          \left[
            \begin{array}{cccc}
       {\rm GDet}_{{\rm P}_{l\text{-}1}}(B_{ 1 \cdot}(e_{1}^T)) &\ldots &    {\rm GDet}_{{\rm P}_{l\text{-}m}}(B_{ m \cdot}(e_{1}^T)) \\
       \vdots  & \ldots  &   \vdots  \\
{\rm GDet}_{{\rm P}_{l\text{-}1}}(B_{ 1 \cdot}(e_{m}^T))  & \ldots& {\rm GDet}_{{\rm P}_{l\text{-}m}}(B_{ m \cdot}(e_{m}^T))
            \end{array}
          \right]} \\
 & =
 {\footnotesize \gamma\,
 \left[
            \begin{array}{cccc}
       \sum\limits_{k=1}^{m}  {\rm GDet}_{{\rm P}_{r\text{-}1}} \left(A_{\cdot 1}(c_k)\right)
        \cdot  {\rm GDet}_{{\rm P}_{l\text{-}1}}
    \left(B_{ 1 \cdot}(e_{k}^T) \right)
     &\ldots
    &    \sum\limits_{k=1}^{m}  {\rm GDet}_{{\rm P}_{r\text{-}1 }} \left(A_{\cdot 1}(c_k)\right)  \cdot  {\rm GDet}_{{\rm P}_{l\text{-}m}}
    \left(B_{ m \cdot}(e_{k}^T) \right)
     \\
       \vdots  & \ldots  &   \vdots  \\
 \sum\limits_{k=1}^{m}  {\rm GDet}_{{\rm P}_{r\text{-}n }} \left(A_{\cdot n}(c_k)\right)  \cdot  {\rm GDet}_{{\rm P}_{l\text{-}1}}
    \left(B_{ 1 \cdot}(e_{k}^T) \right)
    & \ldots& \sum\limits_{k=1}^{m}  {\rm GDet}_{{\rm P}_{r\text{-}n }} \left(A_{\cdot n}(c_k)\right)
     \cdot  {\rm GDet}_{{\rm P}_{l\text{-}m}}
    \left(B_{ m \cdot}(e_{k}^T) \right)
            \end{array}
          \right]},
\end{align*}
where $\gamma= \frac{1}{{\rm Det}(A){\rm Det}(B)}$.
\end{proof}

The following corollary is an immediate consequence of Theorem~\ref{GDETPmatreqappth}, giving Cramer's rule for the unique solutions of the quaternion linear systems in terms of the \({\rm P}\)-generalized determinant.

\begin{corollary}\label{caremrulineqth}
Let $A = A^* \in \mathbb{H}^{n\times n}$ with ${\rm Det}(A) \neq 0$.
\begin{enumerate}[(1)]
  \item Let $b\in \mathbb{H}^{n\times 1}$. Then the unique solution $x=[x_1, x_2,\ldots, x_n]^T \in \mathbb{H}^{n\times 1}$ of the system of linear equations
  \begin{equation*}
    Ax=b
  \end{equation*}
  has components given by
  \begin{equation*}
    x_j = \frac{{\rm GDet}_{{\rm P}_{r\text{-}j}}
    \left(A_{\cdot j}(b)\right)}
    {{\rm Det}(A)},
  \end{equation*}
  where $j=1,2,\ldots,n$.

  \item Let $b\in \mathbb{H}^{1\times n}$. Then the unique solution $x=[x_1, x_2,\ldots, x_n]\in  \mathbb{H}^{1\times n}$ of the system of linear equations
  \begin{equation*}
    xA=b
  \end{equation*}
  has components given by
  \begin{equation*}
    x_i = \frac{{\rm GDet}_{{\rm P}_{l\text{-}i}}
    \left(A_{i \cdot}(b) \right)}
    {{\rm Det}(A)},
  \end{equation*}
  where $i=1,2,\ldots,n$.
\end{enumerate}
\end{corollary}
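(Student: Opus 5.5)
Each part of Corollary~\ref{caremrulineqth} is a degenerate case of Theorem~\ref{GDETPmatreqappth}, obtained by taking one side of the two-sided equation to be trivial. For part (1), I would apply Theorem~\ref{GDETPmatreqappth} with $m=1$, $B=[1]\in\mathbb{H}^{1\times 1}$ (trivially Hermitian with ${\rm Det}(B)=1$), $C=b$ and $X=x$. Then $AXB=C$ is exactly $Ax=b$, and the unique solution $x=A^{-1}b$ follows from ${\rm Det}(A)\neq0$ by Item~\eqref{Shuangproremarkitem004} of Remark~\ref{Shuangproremark}. The sum over $k$ has the single term $k=1$, with $c_1=b$ and $e_1=[1]$. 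The factor ${\rm GDet}_{{\rm P}_{l\text{-}1}}(B_{1\cdot}(e_1^T))$ is the generalized determinant of the $1\times1$ matrix $[1]$; only the identity permutation occurs, so it equals $1$. The formula then reduces to $x_j={\rm GDet}_{{\rm P}_{r\text{-}j}}(A_{\cdot j}(b))/{\rm Det}(A)$. The index is labelled $i$ in the theorem and $j$ here.

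For part (2), I would apply the theorem symmetrically with $n=1$ in the role of $A$, namely $A=[1]$, $C=b\in\mathbb{H}^{1\times n}$, and the given Hermitian matrix in the role of $B$. Then the solution is $x_j=\sum_k c_k\,{\rm GDet}_{{\rm P}_{l\text{-}j}}(B_{j\cdot}(e_k^T))/{\rm Det}(B)$, since ${\rm GDet}([c_k])=c_k$. What remains is to recombine this sum as ${\rm GDet}_{{\rm P}_{l\text{-}j}}(B_{j\cdot}(b))$. Row $j$ of $B_{j\cdot}(b)$ is $\sum_k b_k e_k^T$, so I would use additivity in row $j$ (Item~\eqref{basicproPgdecThitem3} of Theorem~\ref{basicproPgdecTh}) together with pulling a left scalar out of row $j$ under ${\rm P}_{l\text{-}j}$ (item (2) of Theorem~\ref{basicproPgdecTh}). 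This gives $\sum_k b_k\,{\rm GDet}_{{\rm P}_{l\text{-}j}}(B_{j\cdot}(e_k^T))={\rm GDet}_{{\rm P}_{l\text{-}j}}(B_{j\cdot}(b))$.

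The main obstacle is the indexing convention of Theorem~\ref{GDETPmatreqappth}. As written, it pairs $c_k$ (a column of $C$) with $e_k$, and for $n=1$ this is not literally the row decomposition I need. So rather than rely on that formula, I would argue part (2) directly, as a cross-check. From Theorem~\ref{quaHerInmafenth} we have $x=bA^{-1}=bR_A$, so $x_i={\rm Det}(A)^{-1}\sum_k b_kR_{ik}$. By Theorem~\ref{cofatortheorem}(1), applied to $A_{i\cdot}(b)$, whose right ${\rm P}_{l\text{-}i}$-cofactors $R_{ik}$ do not involve row $i$, this sum equals ${\rm GDet}_{{\rm P}_{l\text{-}i}}(A_{i\cdot}(b))$. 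The same direct argument works for part (1): $x=A^{-1}b=L_Ab$, so $x_j={\rm Det}(A)^{-1}\sum_k L_{kj}b_k$. Theorem~\ref{cofatortheorem}(2) identifies this sum with ${\rm GDet}_{{\rm P}_{r\text{-}j}}(A_{\cdot j}(b))$, since the cofactors $L_{kj}$ are independent of column $j$. These direct cofactor-expansion arguments are what I would actually write, citing Theorem~\ref{GDETPmatreqappth} only as motivation.
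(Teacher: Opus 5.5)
Your proposal is correct and follows the paper, which presents the corollary as the immediate specialization of Theorem~\ref{GDETPmatreqappth} to $B=[1]$ (part~(1)) and $A=[1]$ (part~(2)). Your direct cofactor-expansion argument is just the inner step of that theorem's proof (computing $A^{-1}C$ via Theorem~\ref{cofatortheorem}). The indexing worry you raise for part~(2) is unfounded. With $n=1$, each column $c_k$ of $C=b$ is the $1\times1$ matrix $[b_k]$, so the theorem already gives $\sum_k b_k\,{\rm GDet}_{{\rm P}_{l\text{-}j}}(B_{j\cdot}(e_k^T))/{\rm Det}(B)$, and your recombination via Theorem~\ref{basicproPgdecTh} finishes it.
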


\begin{remark}
In particular, if \({\rm P}_{r\text{-}j}\) and \({\rm P}_{i\text{-}i}\) are chosen to satisfy \eqref{PGdetspecaseremarkitem4} and \eqref{PGdetspecaseremarkitem5} of Remark \ref{PGdetspecaseremark}, respectively, then Theorem~\ref{GDETPmatreqappth} recovers \cite[Theorems 9.1, 9.2]{kyrchei2008cramer}, and Corollary~\ref{caremrulineqth} recovers \cite[Theorems 9.3, 9.4]{kyrchei2008cramer}.
\end{remark}

\section{Conclusions}
\label{conclusection}
 This paper introduces the \({\rm P}\)-generalized determinant of quaternion matrices.
 We investigate its equivalent characterization under row or column expansion, and
 the conversion between the \({\rm P}_{l}\)- and \({\rm P}_{r}\)-generalized determinants.
 We also study the invariance of the \({\rm P}\)-generalized determinant for quaternion Hermitian matrices,
 together with its behavior under row (column) replacement and one-sided elementary row (column) operations.
 Furthermore, we apply the \({\rm P}\)-generalized determinant to derive the determinantal representation of the inverse of an invertible quaternion Hermitian matrix,
 define the \({\rm P}\)-adjoint matrix,
 and establish Cramer's rule for quaternion matrix equations and linear systems.

The \({\rm P}\)-generalized determinant unifies the five classical quaternion determinants: \({\rm Mdet}\), \({\rm Xdet}\), \({\rm Cdet}\), \({\rm rdet}_{i}\), and \({\rm cdet}_{j}\).
It substantially relaxes the strict product-order restrictions inherent in the multivariate polynomial-type definitions of these classical determinants, while retaining most of their desirable properties
We believe  that the $\mathrm{P}$-generalized determinant opens up  rich prospects for future research, which may be explored along the following directions:

\begin{enumerate}[(1)]
  \item
  In recent years, dual quaternions \cite{clifford1873preliminary}, as an extension of quaternions, have been widely studied \cite{qi2022dual,qi2023eigenvalues}
  and effectively applied to formation control of multi-agent and multi-rigid-body systems such as
  uncrewed  aerial vehicles, robots, and satellites \cite{cui2024dual,liu2025distributed,chen2024dual,qi2023dualM}; the hand-eye calibration problem and the simultaneous localization and mapping problem \cite{qi2023standardD}; and 3D path planning    \cite{xia2026novel}.
Recently, Cui et al.\ \cite{cui2024moore} extended the $\rm Mdet$   and ${\rm Cdet}$  from quaternion Hermitian matrices to dual quaternion Hermitian matrices, and applied them to the gain graph theory \cite{cui2024spectral}.
Naturally, extending the $\mathrm{P}$-generalized determinant theory from quaternion matrices to dual quaternion matrices and exploring its applications in rigid body motion and  graph theory  is a research direction worth pursuing.

\item
Moore  is also known for proposing the general reciprocal in 1920 \cite{moore1920reciprocal}, which was redefined  by  Penrose \cite{penrose1955generalized} in 1955 and is nowadays called the Moore--Penrose inverse. Generalized inverse theory has since flourished; see \cite{benisrael2003generalized,wang2018generalized,cvetkovic2017algebraic, chen2024algebraic}.
Kyrchei employed the row--column determinant theory to compute various quaternion generalized inverses; see \cite[Chapters 4, 5]{Kyrchei2026}.
A natural question arises: can the $\mathrm{P}$-generalized determinant theory be used to compute quaternion generalized inverses effectively? This would directly overcome the fixed-order restriction of the row--column determinant, allowing any rule satisfying the theoretical requirements to be employed in computing generalized inverses.
\end{enumerate}

\section*{Funding}
This work was  supported by the National Natural Science Foundation of China [grant number 12371023].

\section*{Conflict of interest}
The authors declare no conflict of interest.

\section*{Author contributions}
\noindent\textbf{Jiale Gao:} Conceptualization, Methodology, Formal analysis, Investigation, Writing -- Original Draft, Writing -- Review \& Editing.

\noindent \textbf{Qing-Wen Wang:} Methodology, Funding acquisition, Supervision, Writing -- Review \& Editing.

\end{document}